\DeclareMathOperator	{\IN}		{\mathbb{N}} 	
	\DeclareMathOperator	{\IR}		{\mathbb{R}}
	\DeclareMathOperator	{\IE}		{\mathbb{E}} 
	\DeclareMathOperator	{\IP}		{\mathbb{P}}
\DeclareMathOperator	{\Var}		{Var}
\DeclareMathOperator	{\Ent}		{Ent}
\DeclareMathOperator	{\Id}		{Id}
\DeclareMathOperator	{\bb}		{\boldsymbol{\beta}}
\DeclarePairedDelimiter	{\abs}		{\lvert}	{\rvert}
\DeclarePairedDelimiter	{\norm}		{\lVert}	{\rVert}
\DeclarePairedDelimiter	{\skal}		{\langle}	{\rangle}
\newcommand				{\eins}		{\text{$\mathbbm{1}$}}
\newcommand				{\dpartial}	{\mathfrak{d}}
\newcommand				{\bnorm}[1]	{\bigg\lVert #1 \bigg\rVert}
\renewcommand 			{\epsilon}	{\varepsilon}
\renewcommand			{\phi}		{\varphi}
\renewcommand			{\tilde}	{\widetilde}
\begin{document}
\title{Concentration inequalities for bounded functionals via log-Sobolev-type inequalities\thanks{This research was supported by the German Research Foundation (DFG) via CRC 1283 ``\textit{Taming uncertainty and profiting from randomness and low regularity in analysis, stochastics and their applications}''.}
}
%\subtitle{Do you have a subtitle?\\ If so, write it here}

\titlerunning{Concentration inequalities for bounded functionals}        % if too long for running head

\author{Friedrich G{\"o}tze         \and
	Holger Sambale \and
	Arthur Sinulis$^*$ %etc.
}

%\authorrunning{Short form of author list} % if too long for running head

\institute{Friedrich G{\"o}tze \and Holger Sambale \and Arthur Sinulis \at Fakult{\"a}t f{\"u}r Mathematik\\Universit{\"a}t Bielefeld\\ Postfach 10 01 31\\ 33501 Bielefeld\\ Germany\\
	%Tel.: +123-45-678910\\
	%Fax: +123-45-678910\\
	\email{goetze@math.uni-bielefeld.de} \\ \email{hsambale@math.uni-bielefeld.de} \\ \email{asinulis@math.uni-bielefeld.de}
}

\date{Received: date / Accepted: date}
% The correct dates will be entered by the editor

\maketitle

\begin{abstract}
	In this paper we prove multilevel concentration inequalities for bounded functionals $f = f(X_1, \ldots, X_n)$ of random variables $X_1, \ldots, X_n$ that are either independent or satisfy certain logarithmic Sobolev inequalities. The constants in the tail estimates depend on the operator norms of $k$-tensors of higher order differences of $f$.
	
	We provide applications for both dependent and independent random variables. This includes deviation inequalities for empirical processes $f(X) = \sup_{g \in \mathcal{F}} \abs{g(X)}$ and suprema of homogeneous chaos in bounded random variables in the Banach space case $f(X) = \sup_{t} \norm{\sum_{i_1 \neq \ldots \neq i_d} t_{i_1 \ldots i_d} X_{i_1} \cdots X_{i_d}}_{\mathcal{B}}$. The latter application is comparable to earlier results of Boucheron--Bousquet--Lugosi--Massart and provides the upper tail bounds of Talagrand. In the case of Rademacher random variables, we give an interpretation of the results in terms of quantities familiar in Boolean analysis. Further applications are concentration inequalities for $U$-statistics with bounded kernels $h$ and for the number of triangles in an exponential random graph model.
	% \PACS{PACS code1 \and PACS code2 \and more}
	\subclass{60E15 \and 05C80}
\end{abstract}
\maketitle

\section{Introduction}				\label{section:Introduction}
During the last forty years, the \emph{concentration of measure phenomenon} has become an established part of probability theory with applications in numerous fields, as is witnessed by the monographs \cite{MS86,Led01,BLM13,RS14,vH16}. One way to prove concentration of measure is by using functional inequalities, more specifically the \emph{entropy method}. It has emerged as a way to prove several groundbreaking concentration inequalities in product spaces by Talagrand \cite{Tal91,Tal96a}, mainly in the works \cite{Led97} and \cite{BL97}, and further developed in \cite{Ma00}. \par
To convey the idea, let us recall that the \emph{logarithmic Sobolev inequality} for the standard Gaussian measure $\nu$ in $\IR^n$ (see \cite{Gr75}) states that for any $f \in C_c^\infty(\IR^n)$ we have
\begin{align}		\label{eqn:LSIgross}
\Ent_{\nu}(f^2) \le 2 \int \abs{\nabla f}^2 d\nu,
\end{align}
where $\Ent_{\nu}(f^2) = \int f^2 \log f^2 d\nu - \int f^2 d\nu \log \int f^2 d\nu$ is the \emph{entropy functional}. Informally, it bounds the disorder of a function $f$ (under $\nu$) by its average local fluctuations, measured in terms of the length of the gradient. It is by now standard that \eqref{eqn:LSIgross} implies subgaussian tail decay for Lipschitz functions (e.\,g. by means of the Herbst argument). In particular, if $f \colon \mathbb{R}^n \to \mathbb{R}$ is a $\mathcal{C}^1$ function such that $|\nabla f| \le L$ a.s., we have $\nu(|f - \int f d\nu| \ge t) \le 2 \exp(-t^2/(2L^2))$ for any $t \ge 0$.
% By an argument going back to Aida and Stroock \cite[Theorem 3.4, Corollary 3.7]{AS94}, one can deduce $L^p$ norm inequalities from \eqref{eqn:LSIgross} in the form
% \begin{equation} \label{eqn:AidaStroockLp}
% \norm{f}_p^2 \le \norm{f}_2^2 + 2\sigma^2(p-2) \widetilde{\norm{f}}_p^2,
% \end{equation}
% where $\widetilde{\norm{\cdot}}_p$ is another norm. Using these $L^p$ norm inequalities, one can obtain concentration inequalities. 

If $\mu$ is a probability measure on a discrete set $\mathcal{X}$ (or a more abstract set not allowing for an immediate replacement for $\abs{\nabla f}$), then there are several ways to reformulate equation \eqref{eqn:LSIgross}, see e.\,g. \cite{DSC96} or \cite{BT06}. We continue these ideas by working in the framework of \emph{difference operators}. Given a probability space $(\mathcal{Y}, \mathcal{A}, \mu)$, we call any operator $\Gamma: L^\infty(\mu) \to L^\infty(\mu)$ satisfying $|\Gamma (af + b)| = a\, |\Gamma f|$ for all $a > 0$, $b \in \mathbb{R}$ a difference operator. Accordingly, we say that $\mu$ satisfies a $\Gamma\mathrm{-LSI}(\sigma^2)$, if for all bounded measurable functions $f$ we have
\begin{align}	\label{eqn:defiLSImitGamma}
	\Ent_{\mu}(f^2) \le 2\sigma^2 \int \Gamma(f)^2 d\mu.
\end{align}
Apart from the domain of $\Gamma$, it is clear that \eqref{eqn:defiLSImitGamma} can be seen as generalization of \eqref{eqn:LSIgross} by defining $\Gamma(f) = \abs{\nabla f}$ on $\IR^n$.% We shall use a certain $L^2$-difference operator $\dpartial$ to apply the entropy method to weakly dependent random variables.

Another route to obtain concentration inequalities is to modify the entropy method, which was done in the framework of so-called $\phi$-entropies. The idea is to replace the function $\phi_0(x) \coloneqq x\log x$ in the definition of the entropy $\Ent^{\phi_0}_{\mu}(f) = \IE_\mu \phi_0(f) - \phi_0(\IE_\mu f)$ by other functions $\phi$. This has been studied in \cite{LO00,BLM03,Cha04}. In the seminal work \cite{BBLM05} the authors proved inequalities for $\phi$-entropies for power functions $\phi(x) = \abs{x}^\alpha, \alpha \in (1,2]$, leading to moment inequalities for independent random variables.

Originally, the entropy method was primarily used to prove sub-Gaussian concentration inequalities for Lipschitz-type functions. However, there are many situations of interest in which the functions under consideration are not Lipschitz or have Lipschitz constants which grow as the dimension increases even after a renormalization which asymptotically stabilizes the variance. Among the simplest examples are polynomial-type functions. Here, the boundedness of the gradient typically has to be replaced by more elaborate conditions on higher order derivatives (up to some order $d$). Moreover, we cannot have subgaussian tail decay anymore. This is already obvious if we consider the product of two independent standard normal random variables, which leads to subexponential tails. We refer to this topic as \emph{higher order concentration}.

The earliest higher order concentration results date back to the late 1960s. Already in \cite{Bo68,Bo70} and \cite{Ne73}, the growth of $L^p$ norms and hypercontractive estimates of polynomial-type functions in Rademacher or Gaussian random variables respectively have been studied. The question of estimating the growth of $L^p$ norms of multilinear polynomials in Gaussian random variables was considered in \cite{Bor84}, \cite{AG93} and \cite{La06}. In the context of Erd{\"o}s--R{\'e}nyi graphs and the triangle problem, concentration inequalities for polynomials functions gained considerable attention, in papers such as \cite{KV00}.

More recently, \emph{multilevel concentration inequalities} have been proven in \cite{Ad06,Wo13,AW15} for many classes of functions. These included $U$-statistics in independent random variables, functions of random vectors satisfying Sobolev-type inequalities and polynomials in sub-Gaussian random variables respectively. We refer to inequalities of the type
\begin{equation}\label{MLI}
\IP\Big( \abs{f(X) - \IE f(X)} \ge t \Big) \le 2 \exp\Big( - \frac{1}{C} \min_{k =1,\ldots,d} f_k(t) \Big)
\end{equation}
as multilevel or \emph{higher order ($d$-th order) concentration  inequalities}. This means that the tails might have different decay properties in some regimes of $[0,\infty)$. Usually, we have $f_k(t) = (t/C_k)^{2/k}$ for some constant $C_k$ which typically depends on the $k$-th order derivatives.

To convey the basic idea of multilevel concentration inequalities, let us once again consider the case $d=2$, e.\,g. a quadratic form of independent, say, Gaussian random variables. As sketched above, in this case the tails decay subexponentially in general. By means of a multilevel concentration inequality (the so-called Hanson--Wright inequality, which we address in more detail at a later point), we can show that while for $t$ large, subexponential tail decay holds, for small $t$ we even get subgaussian decay. In this sense, multilevel concentration inequalities provide refined tail estimates which do not only cover the behavior for large $t$.

Our own work started with a second order concentration inequality on the sphere in \cite{BCG17} and was continued in \cite{BGS18} for bounded functionals of various classes of random variables (e.\,g. independent random variables or in presence of a logarithmic Sobolev inequality \eqref{eqn:LSIgross}), and in \cite{GSS18} for weakly dependent random variables (e.\,g. the Ising model). In these papers, we studied higher order concentration, arriving at multi-level tail inequalities of type \eqref{MLI}. If the underlying measure $\mu$ satisfies a logarithmic Sobolev inequality, \cite[Corollary 1.11]{BGS18} yields $f_k(t) = (t/C_k)^{2/k}$ with $C_k = (\int |f^{(k)}|_\mathrm{op}^2 d\mu)^{1/2}$ for $k = 1, \ldots, d-1$ and $C_d = \sup |f^{(d)}|_\mathrm{op}$, where $\abs{f^{(k)}}_\mathrm{op}$ denotes the operator norm of the respective tensors of $k$-th order partial derivatives. A downside in both \cite{BGS18} and \cite{GSS18} is that for functions of independent or weakly dependent random variables, comparable estimates involve Hilbert--Schmidt instead of operator norms, leading to weaker estimates in general.

A central aspect of the present article is to fix this drawback by a slightly more elaborate approach. Here, we consider both independent and dependent random variables. In either case, we prove multilevel concentration inequalities of the same type, and apply them to different forms of functionals. We provide improvements of earlier higher order concentration results like \cite[Theorem 1.1]{BGS18} or \cite[Theorem 1.5]{GSS18}, replacing the Hilbert--Schmidt norms appearing therein by operator norms. This leads to sharper bounds and a wider range of applicability.

A special emphasis is placed on providing \emph{uniform versions} of the higher order concentration inequalities. By this, we mean that we consider functionals of supremum type $f(X) = \sup_{f \in \mathcal{F}} \abs{f(X)}$, which includes suprema of polynomial chaoses, or empirical processes.
%However, due to technical limitations, this leads to deviation inequalities for the upper bound only.
Two more applications are given by $U$-statistics in independent and weakly dependent random variables as well as a triangle counting statistic in some models of random graphs, for which we prove concentration inequalities.

\textbf{Notations.} Throughout this note, $X = (X_1,\ldots,X_n)$ is a random vector taking values in some product space $\mathcal{Y} = \otimes_{i = 1}^n \mathcal{X}_i$ (equipped with the product $\sigma$-algebra) with law $\mu$, defined on a probability space $(\Omega, \mathcal{A}, \mathbb{P})$. By abuse of language, we say that $X$ satisfies a $\Gamma\mathrm{-LSI}(\sigma^2)$, if its distribution does. In any finite-dimensional vector space, we let $\abs{\cdot}$ be the Euclidean norm, and for brevity, we write $[q] \coloneqq \{1,\ldots, q\}$ for any $q \in \IN$. Given a vector $x = (x_j)_{j = 1,\ldots,n}$ we write $x_{i^c} = (x_j)_{j \neq i}$. To any $d$-tensor $A$ we define the Hilbert--Schmidt norm $\abs{A}_\mathrm{HS} \coloneqq (\sum_{i_1, \ldots, i_d} A_{i_1 \ldots i_d}^2)^{1/2}$ and the operator norm
\begin{align*}				%\label{eqn:OperatorNorm}
	\abs{A}_\mathrm{\mathrm{op}} \coloneqq \sup_{\substack{v^1, \ldots, v^d \in \IR^n \\ \abs{v^j} \le 1}} \skal{v^1 \cdots v^d, A} = \sup_{\substack{v^1, \ldots, v^d \\ \abs{v^j} \le 1}} \sum_{i_1,\ldots,i_d} v^1_{i_1} \cdots v^d_{i_d} A_{i_1 \ldots i_d},
\end{align*}
using the outer product $(v^1 \cdots v^d)_{i_1 \ldots i_d} = \prod_{j = 1}^d v^j_{i_j}$.
For brevity, for any random $k$-tensor $A$ and any $p \in (0,\infty]$ we abbreviate $\norm{A}_{\mathrm{HS},p} = (\IE\abs{A}_{\mathrm{HS}}^p)^{1/p}$ as well as $\norm{A}_{\mathrm{op},p} = (\IE\abs{A}_{\mathrm{op}}^p)^{1/p}$.
Lastly, we ignore any measurability issues that may arise. Thus, we assume that all the suprema used in this work are either countable or defined as $\sup_{t \in T} = \sup_{F \subset T : F \text{ finite}} \sup_{t \in F}$.

\subsection{Main results}
To formulate our main results, we introduce a difference operator labeled $\abs{\mathfrak{h} f}$ which is frequently used in the method of bounded differences. Let $X' = (X_1',\ldots, X'_n)$ be an independent copy of $X$, defined on the same probability space. Given $f(X) \in L^\infty(\mathbb{P})$, define for each $i \in [n]$
\begin{equation*}
T_i f \coloneqq T_if(X) \coloneqq f(X_{i^c}, X_i') = f(X_1, \ldots, X_{i-1}, X_i',\linebreak[2] X_{i+1},\ldots, X_n)
\end{equation*}
and
\begin{equation*}	%\label{eqn:Operatorh}
\mathfrak{h}_i f(X) = \lVert f(X) - T_if(X) \rVert_{i, \infty},
\qquad \mathfrak{h} f(X) = (\mathfrak{h}_1 f(X), \ldots, \mathfrak{h}_n f(X)),
\end{equation*}
where $\lVert \cdot \rVert_{i, \infty}$ denotes the $L^\infty$-norm with respect to $(X_i,X_i')$. The difference operator $\abs{\mathfrak{h}f}$ is given as the Euclidean norm of the vector $\mathfrak{h} f$.

We shall also need higher order versions of $\mathfrak{h}$, denoted by $\mathfrak{h}^{(d)} f$. They can be thought of as analogues of the $d$-tensors of all partial derivatives of order $d$ in an abstract setting. To define the $d$-tensor $\mathfrak{h}^{(d)} f$, we specify it on its ``coordinates''. That is, given distinct indices $i_1, \ldots, i_d$, we set
\begin{align} 		\label{eqn:DefiHigherOrderDerivatives}
\begin{split}
\mathfrak{h}_{i_1 \ldots i_d}f(X) 
  = \; &
\Big\lVert\, \prod_{s=1}^d\, 
(\mathrm{Id} - T_{i_s})f(X) \Big\rVert_{i_1, \ldots, i_d, \infty}\\
   = \; &
\Big\lVert\, f(X) + \sum_{k=1}^d\, (-1)^k \sum_{1 \leq s_1 < \ldots < s_k \leq d}
T_{i_{s_1} \ldots i_{s_k}}f(X)\, \Big\rVert_{i_1, \ldots, i_d, \infty}
\end{split}
\end{align}
where $T_{i_1 \ldots i_d} = T_{i_1} \circ \ldots \circ T_{i_d}$ exchanges the random variables $X_{i_1}, \ldots, X_{i_d}$ by $X'_{i_1}, \ldots, X'_{i_d}$, and 
$\lVert \cdot \rVert_{i_1, \ldots, i_d, \infty}$ denotes the $L^\infty$-norm 
with respect to the random variables $X_{i_1}, \ldots, X_{i_d}$ and 
$X_{i_1}', \ldots, X_{i_d}'$. For instance, for $i \ne j$,  
\[
	\mathfrak{h}_{ij} f(X) = \lVert f(X) - T_if(X) - T_jf(X) + T_{ij}f(X) \rVert_{i,j,\infty}.
\]
Using the definition \eqref{eqn:DefiHigherOrderDerivatives}, we define tensors of $d$-th order differences as follows:
\begin{align*} %\label{eqn:Hessediskret}
\big(\mathfrak{h}^{(d)}f(X)\big)_{i_1 \ldots i_d} = 
\begin{cases} 
\mathfrak{h}_{i_1 \ldots i_d}f(X), & 
\text{if $i_1, \ldots, i_d$ are distinct}, \\ 0, & \text{else}. 
\end{cases}
\end{align*}
Whenever no confusion is possible, we omit writing the random vector $X$, i.\,e. we freely write $f$ instead of $f(X)$ and $\mathfrak{h}^{(d)}f$ instead of $\mathfrak{h}^{(d)}f(X)$.%Note that $\abs{\mathfrak{h}^{(d)} f}_\mathrm{HS}$ and $\abs{\mathfrak{h}^{(d)} f}_\mathrm{op}$ are again difference operators.

Our first main theorem is a concentration inequality for general, bounded functionals of independent random variables $X_1, \ldots, X_n$. 

\begin{theorem}			\label{theorem:Lpnormestimates}
	Let $X$ be a random vector with independent components, $f: \mathcal{Y} \to \IR$ a measurable function satisfying $f = f(X) \in L^\infty(\IP)$, $d \in \IN$ and define $C \coloneqq 217d^2$. We have for any $t \ge 0$
	{\footnotesize
	\begin{align}	\label{eqn:LpNormEstimatesIndependentOnly}
		\IP\left( \abs{f - \IE f} \ge t \right) \le 2 \exp \Big( -\frac{1}{C} \min_{k = 1,\ldots,d-1} \Big( \frac{t}{\norm{\mathfrak{h}^{(k)} f}_{\mathrm{op},1}} \Big)^{2/k} \wedge \Big(\frac{t}{\norm{\mathfrak{h}^{(d)} f}_{\mathrm{op},\infty}}\Big)^{2/d} \Big).
	\end{align}
	}
\end{theorem}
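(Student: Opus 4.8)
\emph{Overall strategy.} The plan is to run the entropy method for product measures, but applied to the right auxiliary functionals, and then to iterate. The single new ingredient that upgrades Hilbert--Schmidt norms to operator norms is a ``linearisation'' observation: the difference operator $\abs{\mathfrak{h}\,\cdot\,}$ applied to a fully contracted higher order difference tensor $\skal{v^1\otimes\cdots\otimes v^k,\mathfrak{h}^{(k)}f}$ is pointwise dominated by $\abs{\mathfrak{h}^{(k+1)}f}_{\mathrm{op}}$, \emph{uniformly in the unit vectors $v^s$}. Throughout, $\kappa$ denotes a universal constant that may change from line to line, and suprema over the (uncountable) families of unit vectors below are treated via the conventions fixed in the paper (reduction to finite subfamilies). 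The first step is a one-step moment inequality: for any countable family $(f_t)_t\subset L^\infty(\IP)$, writing $F=\sup_t f_t$, one has for all $p\ge 2$
\begin{align*}
\norm{F-\IE F}_p \le \kappa\sqrt{p}\,\bnorm{\sup_t \abs{\mathfrak{h}^{(1)}f_t}_{\mathrm{op}}}_p .
\end{align*}
This follows from the generalised Efron--Stein moment inequalities of \cite{BBLM05} applied to $F$ and to $-F$, once one observes that a (near) maximiser $\hat t=\hat t(X)$ of $F(X)$ does not depend on the coordinate being resampled, so that $\sum_i (F-T_iF)_+^2 \le \sup_t\sum_i (f_t-T_if_t)^2 \le \sup_t\abs{\mathfrak{h}^{(1)}f_t}_{\mathrm{op}}^2$ with the supremum \emph{outside} the sum; bounding the Efron--Stein variance proxies $V^{\pm}$ by this quantity gives the displayed bound. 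Specialised to a single function, $\norm{f-\IE f}_p\le\kappa\sqrt p\,\norm{\abs{\mathfrak{h}^{(1)}f}_{\mathrm{op}}}_p$.

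\emph{Linearisation.} Fix unit vectors $v^1,\ldots,v^k\in\IR^n$ and put $g_{v^1\ldots v^k}\coloneqq\skal{v^1\otimes\cdots\otimes v^k,\mathfrak{h}^{(k)}f}$. Since $\mathfrak{h}_{i_1\ldots i_k}f$ no longer depends on $X_{i_1},\ldots,X_{i_k}$, one checks (using $\abs{\sup g_1-\sup g_2}\le\sup\abs{g_1-g_2}$ for the $L^\infty$-norms involved) that $\mathfrak{h}_\ell(\mathfrak{h}_{i_1\ldots i_k}f)\le\mathfrak{h}^{(k+1)}_{i_1\ldots i_k\ell}f$ for every $\ell$. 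As $\mathfrak{h}_\ell$ commutes with the contraction by the constant vectors $v^s$, the triangle inequality gives $\mathfrak{h}_\ell g_{v^1\ldots v^k}(X)\le\sum_{i_1,\ldots,i_k}\abs{v^1_{i_1}}\cdots\abs{v^k_{i_k}}\,\mathfrak{h}^{(k+1)}_{i_1\ldots i_k\ell}f(X)$, and summing the squares over $\ell$ yields
\begin{align*}
\abs{\mathfrak{h}^{(1)}g_{v^1\ldots v^k}(X)}_{\mathrm{op}} = \abs{\mathfrak{h}^{(1)}g_{v^1\ldots v^k}(X)} \le \sup_{\abs{u}\le 1}\skal{\abs{v^1}\otimes\cdots\otimes\abs{v^k}\otimes u,\,\mathfrak{h}^{(k+1)}f(X)} \le \abs{\mathfrak{h}^{(k+1)}f(X)}_{\mathrm{op}},
\end{align*}
the bound being uniform in $v^1,\ldots,v^k$. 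Moreover $\babs{\IE g_{v^1\ldots v^k}}\le\abs{\IE\mathfrak{h}^{(k)}f}_{\mathrm{op}}\le\norm{\mathfrak{h}^{(k)}f}_{\mathrm{op},1}$.

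\emph{Iteration and tails.} Because $\abs{\mathfrak{h}^{(k)}f(X)}_{\mathrm{op}}=\sup_{v^1,\ldots,v^k}g_{v^1\ldots v^k}(X)$, applying Step~1 to the family $\{g_{v^1\ldots v^k}\}$ and then the linearisation bound to the resulting gradient term gives, for $1\le k\le d-1$ and $p\ge 2$,
\begin{align*}
\norm{\abs{\mathfrak{h}^{(k)}f}_{\mathrm{op}}}_p \le \sup_{v^1,\ldots,v^k}\babs{\IE g_{v^1\ldots v^k}} + \bnorm{\sup_{v^1,\ldots,v^k}\big(g_{v^1\ldots v^k}-\IE g_{v^1\ldots v^k}\big)}_p \le \norm{\mathfrak{h}^{(k)}f}_{\mathrm{op},1} + \kappa\sqrt p\,\norm{\abs{\mathfrak{h}^{(k+1)}f}_{\mathrm{op}}}_p .
\end{align*}
Starting from $\norm{f-\IE f}_p\le\kappa\sqrt p\,\norm{\abs{\mathfrak{h}^{(1)}f}_{\mathrm{op}}}_p$, iterating this down to $k=d-1$, and bounding the last term by $\norm{\abs{\mathfrak{h}^{(d)}f}_{\mathrm{op}}}_p\le\norm{\mathfrak{h}^{(d)}f}_{\mathrm{op},\infty}$, one obtains
\begin{align*}
\norm{f-\IE f}_p \le \sum_{k=1}^{d-1}(\kappa\sqrt p)^k\norm{\mathfrak{h}^{(k)}f}_{\mathrm{op},1} + (\kappa\sqrt p)^d\norm{\mathfrak{h}^{(d)}f}_{\mathrm{op},\infty},\qquad p\ge 2 .
\end{align*}
Finally, the standard passage from such a moment bound to a tail bound (a bound $\norm{Z}_p\le\sum_{k=1}^d a_kp^{k/2}$ for all $p\ge 2$ implies $\IP(\abs{Z}\ge t)\le 2\exp(-\tfrac1c\min_k(t/a_k)^{2/k})$; see e.g.\ \cite{BGS18,AW15}) gives \eqref{eqn:LpNormEstimatesIndependentOnly}, and tracking constants through the $d$-fold iteration and this conversion yields the value $C=217d^2$.

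\emph{Main obstacle.} The delicate point is Step~1, i.e.\ proving the one-step inequality with the supremum \emph{outside} the $\ell^2$-sum over coordinates: this is exactly what lets the operator norm, rather than the Hilbert--Schmidt norm $(\sum_\ell\sup_t\mathfrak{h}_\ell f_t^2)^{1/2}$ one gets from naively applying an $\mathfrak{h}$-log-Sobolev inequality to $\sup_t f_t$, survive the linearisation step. Carrying this out rigorously requires running the $\phi$-entropy/Efron--Stein machinery for the supremum functional directly, handling near-maximisers and the measurability conventions, and extracting clean $L^p$-bounds with universal constants; it is also this uniform-in-$t$ formulation that feeds into the empirical-process and chaos applications.
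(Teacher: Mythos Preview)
Your overall strategy---iterate a one-step moment inequality and pass to tails---matches the paper's, and your linearisation observation is essentially the content of Lemma~\ref{lemma:recursiveEstimateOperatorNorms}. The gap is in Step~1, which you yourself flag as the delicate point: the two-sided bound with the supremum \emph{outside} the $\ell^2$-sum is not justified by the argument you give.

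The maximiser trick only controls the upper tail. For $F=\sup_t f_t$ you correctly get $\sum_i(F-T_iF)_+^2\le\sup_t\sum_i(f_t-T_if_t)^2$, since the maximiser $\hat t(X)$ does not depend on the resampled coordinate. But ``applied to $-F$'' this breaks: the relevant quantity is $\sum_i(T_iF-F)_+^2$, and the (near) maximiser of $T_iF$ depends on $X_i'$ and hence on $i$, so the supremum cannot be pulled outside the sum. The symmetric two-sided inequality $\norm{F-\IE F}_p\le C\sqrt p\,\norm{\mathfrak{h}F}_p$ \emph{does} hold, but $\mathfrak{h}_iF\le\sup_t\mathfrak{h}_if_t$ only gives $\abs{\mathfrak{h}F}^2\le\sum_i\sup_t(\mathfrak{h}_if_t)^2$, i.e.\ the supremum \emph{inside}---exactly the Hilbert--Schmidt bound you are trying to avoid.

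The paper's fix is to use positive parts in the recursion. After the first step $\norm{f-\IE f}_p\le C\sqrt p\,\norm{\abs{\mathfrak{h}f}}_p$ one writes, for $W=\abs{\mathfrak{h}^{(k)}f}_{\mathrm{op}}\ge 0$,
\[
\norm{W}_p\le\IE W+\norm{(W-\IE W)_+}_p\le\norm{\mathfrak{h}^{(k)}f}_{\mathrm{op},1}+C\sqrt p\,\norm{\mathfrak{h}^+W}_p,
\]
and then $\abs{\mathfrak{h}^+W}\le\abs{\mathfrak{h}^{(k+1)}f}_{\mathrm{op}}$ (Lemma~\ref{lemma:recursiveEstimateOperatorNorms}). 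Your linearisation bound $\abs{\mathfrak{h}g_v}\le\abs{\mathfrak{h}^{(k+1)}f}_{\mathrm{op}}$ is actually stronger than needed and implies the $\mathfrak{h}^+$ version, so your argument is easily repaired---but it must go through $(\cdot)_+$ and $\mathfrak{h}^+$, not through a two-sided Step~1. (A smaller slip: your intermediate decomposition $\sup_v\abs{\IE g_v}+\norm{\sup_v(g_v-\IE g_v)}_p$ is not what Step~1 would control; Step~1 applied to $\{g_v\}$ bounds $\norm{\sup_v g_v-\IE\sup_v g_v}_p$, and the decomposition you want is simply $\norm{W}_p\le\IE W+\norm{(W-\IE W)_+}_p$.)
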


For the sake of illustration, let us consider the case of $d=2$. Assuming that $X_1, \ldots, X_n$ satisfy $\mathbb{E}X_i = 0$, $\mathbb{E} X_i^2 = 1$ and $\abs{X_i} \le M$ a.s., let $f(X)$ be the quadratic form $f(X) = \sum_{i<j} a_{ij} X_i X_j = X^TAX$. Here, $a_{ij} \in \mathbb{R}$ for all $i<j$, and $A$ is the symmetric matrix with zero diagonal and entries $A_{ij} = a_{ij}/2$ if $i < j$. In this case, it is easy to see that $\norm{\mathfrak{h} f}_{\mathrm{op},1} \le \norm{\mathfrak{h} f}_{\mathrm{op},2} \le 4M \abs{A}_\mathrm{HS}$ and $\norm{\mathfrak{h}^{(2)} f}_{\mathrm{op},\infty} \le 8M^2 \abs{A^\mathrm{abs}}_\mathrm{op}$, where $A^\mathrm{abs}$ is the matrix given by $(A^\mathrm{abs})_{ij} = \abs{a_{ij}}$. As a result,
\[
\IP\left( \abs{f - \IE f} \ge t \right) \le 2 \exp \Big( -\frac{1}{CM^2} \min \Big(\frac{t^2}{\abs{A}_\mathrm{HS}^2}, \frac{t}{\abs{A^\mathrm{abs}}_\mathrm{op}} \Big) \Big).
\]
This is a version of the famous \emph{Hanson--Wright inequality}. For the various forms of the Hanson--Wright inequality we refer to \cite{HW71,W73,HKZ12,RV13,VW15,Ad15,ALM18}.

Note that by a modification of our proofs (using arguments especially adapted to polynomials), it is possible to replace $\abs{A^\mathrm{abs}}_\mathrm{op}$ by $\abs{A}_\mathrm{op}$, thus avoiding the drawback of switching to a matrix with a possibly larger operator norm. See Section \ref{subsection:UnifBds} and \ref{subsection:Polyn+subgrc} for details. On the other hand, Theorem \ref{theorem:Lpnormestimates} allows for \emph{any} function $f$, not just quadratic forms, and the case of $d=2$ can in this sense be considered as generalization of the Hanson--Wright inequality.

For a certain class of weakly dependent random variables $X_1, \ldots, X_n$, we can prove similar estimates as in Theorem \ref{theorem:Lpnormestimates}.
To this end, we introduce another difference operator, which is more familiar in the context of logarithmic Sobolev inequalities for Markov chains as developed in \cite{DSC96}. Assume that $\mathcal{Y} = \otimes_{i = 1}^n \mathcal{X}_i$ for some finite sets $\mathcal{X}_1, \ldots, \mathcal{X}_n$, equipped with a probability measure $\mu$ and let $\mu(\cdot \mid x_{i^c})$ denote the conditional measure (interpreted as a measure on $\mathcal{X}_i$) and $\mu_{i^c}$ the marginal on $\otimes_{j \ne i} \mathcal{X}_j$. Finally, set
\begin{align*}
	\abs{\dpartial f}^2(x) &\coloneqq \sum_{i = 1}^n (\dpartial_i f(x))^2 \coloneqq \sum_{i = 1}^n \mathrm{Var}_{\mu(\cdot \mid x_{i^c})}(f(x_{i^c}, \cdot)) \\
  &=\sum_{i = 1}^n \frac{1}{2} \iint (f(x_{i^c},y) - f(x_{i^c},y'))^2 d\mu(y \mid x_{i^c}) d\mu(y' \mid x_{i^c}).
%	&= \sum_{i = 1}^n \iint (f(\overline{x}_i,y) - f(\overline{x}_i,y'))_+^2 d\mu(y \mid \overline{x}_i) d\mu(y' \mid \overline{x}_i),
\end{align*}
This difference operator appears naturally in the Dirichlet form associated to the Glauber dynamic of $\mu$, given by
\begin{align*}	%\label{defi:DirichletForm}
	\mathcal{E}(f,f) \coloneqq \sum_{i = 1}^n \int \Var_{\mu(\cdot \mid x_{i^c})}(f(x_{i^c},\cdot)) d\mu_{i^c}(x_{i^c}) = \int \abs{\dpartial f}^2 d\mu.
\end{align*}
In the next theorem, we require a $\dpartial$--LSI for the underlying random variables $X_1, \ldots, X_n$. A number of models which satisfy this assumption will be discussed below.

\begin{theorem}			\label{theorem:partialLSITails}
Let $X = (X_1, \ldots, X_n)$ be a random vector satisfying a $\dpartial\mathrm{-LSI}(\sigma^2)$ and $f: \mathcal{Y} \to \IR$ a measurable function with $f = f(X) \in L^\infty(\IP)$. With the constant $C = 15 \sigma^2 d^2 > 0$ we have for any $t \ge 0$
{\footnotesize
	\begin{align}	\label{eqn:LpestimatesWithOPNorms}
		\IP\left( \abs{f - \IE f} \ge t \right) \le 2 \exp \Big( -\frac{1}{C} \min_{k = 1,\ldots,d-1} \Big( \frac{t}{\norm{\mathfrak{h}^{(k)} f}_{\mathrm{op},1}} \Big)^{2/k} \wedge \Big(\frac{t}{\norm{\mathfrak{h}^{(d)} f}_{\mathrm{op},\infty}}\Big)^{2/d} \Big).
	\end{align}
}
\end{theorem}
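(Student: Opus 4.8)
The proof runs along the same lines as that of Theorem~\ref{theorem:Lpnormestimates}, with the $\dpartial$-LSI taking over the role that subadditivity of entropy (resp. the Efron--Stein inequality) plays for independent coordinates. The plan is to first prove a polynomial growth bound for the moments of $f-\IE f$, namely that for every $p\ge 2$
\begin{equation*}
	\norm{f-\IE f}_p \;\le\; \sum_{k=1}^{d-1}\big(\kappa\sigma^2 d^2 p\big)^{k/2}\norm{\mathfrak{h}^{(k)}f}_{\mathrm{op},1} \;+\; \big(\kappa\sigma^2 d^2 p\big)^{d/2}\norm{\mathfrak{h}^{(d)}f}_{\mathrm{op},\infty}
\end{equation*}
for a universal constant $\kappa$, and then to deduce \eqref{eqn:LpestimatesWithOPNorms} by Markov's inequality and optimization over $p$; the latter is the by-now routine passage from moment growth of this shape to a multilevel tail of the form \eqref{MLI}, so all of the substance is in the moment bound.

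Its engine is a first-order $L^p$ estimate coming from the $\dpartial$-LSI by the Herbst argument for moments (in the spirit of Aida--Stroock): one applies the LSI with $g^{p/2}$ in place of $f$ and integrates the resulting differential inequality for $p\mapsto\norm{g}_p$. What makes this go through for the non-diffusive operator $\dpartial$ is the chain-rule/contraction estimate $\abs{\dpartial(\psi\circ g)}\le L\,\abs{\dpartial g}$ for $L$-Lipschitz $\psi$, immediate from $\Var(\psi(Y))\le L^2\Var(Y)$; absorbing the second-order remainder through the Poincar\'e inequality that the LSI implies, one gets, for every bounded $g$ and $p\ge 2$,
\begin{equation*}
	\norm{g-\IE g}_p \;\le\; \sqrt{\kappa_0\sigma^2 p}\,\norm{\,\abs{\dpartial g}\,}_p .
\end{equation*}
I would iterate this, using two structural facts about the difference operators. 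First, $\abs{\dpartial g}^2=\sum_i\Var_{\mu(\cdot\mid x_{i^c})}(g)\le\tfrac14\sum_i(\mathfrak{h}_i g)^2$, so $\abs{\dpartial g}\le\tfrac12\abs{\mathfrak{h}^{(1)}g}$. Second, applying a further difference $\mathfrak{h}_j$ to a coordinate of $\mathfrak{h}^{(k)}f$ only enlarges it to the corresponding coordinate of $\mathfrak{h}^{(k+1)}f$: since $\babs{\norm{a}-\norm{b}}\le\norm{a-b}$ and the operators $\Id-T_{i_s}$ pairwise commute, $\mathfrak{h}_j\big((\mathfrak{h}^{(k)}f)_{i_1\ldots i_k}\big)\le(\mathfrak{h}^{(k+1)}f)_{i_1\ldots i_k j}$. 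At each step one splits $\norm{\,\abs{\dpartial g}\,}_p\le\IE\abs{\dpartial g}+\norm{\,\abs{\dpartial g}-\IE\abs{\dpartial g}\,}_p$ and re-applies the first-order estimate to the centred part; after $d$ rounds the expectations $\IE\abs{\mathfrak{h}^{(k)}f}$ of the lower-order tensors appear with weights $(\sigma^2 p)^{k/2}$, $k=1,\ldots,d-1$, and the last round leaves a $p$-norm that is bounded by the sup-norm of $\mathfrak{h}^{(d)}f$. The number of rounds and the constants picked up along the way combine into the factor $d^2$ in $C=15\sigma^2 d^2$.

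Carried out literally, this recursion only produces the \emph{Hilbert--Schmidt} norms of the tensors $\mathfrak{h}^{(k)}f$, which is exactly the weakness of \cite[Theorem~1.5]{GSS18} that the present statement is designed to remove; the loss enters at the point where one differentiates the scalar $\abs{A}_{\mathrm{op}}$, since $\abs{\dpartial\abs{A}_{\mathrm{op}}}^2\le\tfrac14\sum_j\abs{\mathfrak{h}_j A}_{\mathrm{op}}^2$ and $\sum_j\abs{\mathfrak{h}_j A}_{\mathrm{op}}^2$ is in general not comparable to $\abs{\mathfrak{h}A}_{\mathrm{op}}^2$ (in the scalar supremum defining $\abs{A}_{\mathrm{op}}$ each index $j$ is free to pick its own maximizer, and the common test tuple that would bring back $\abs{\cdot}_{\mathrm{op}}$ is lost). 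Repairing this is the main obstacle and is where the ``slightly more elaborate approach'' is needed. The observation I would build on is that the first-order estimate, applied for a \emph{fixed} tuple of contracting vectors $v^1,\ldots,v^k$ ($\abs{v^j}\le 1$) to the scalar function $\skal{v^1\cdots v^k,\mathfrak{h}^{(k)}f}$, does reproduce the operator norm $\abs{\mathfrak{h}^{(k+1)}f}_{\mathrm{op}}$ rather than its Hilbert--Schmidt norm, because after contracting the first $k$ slots of $\mathfrak{h}^{(k+1)}f$ against the fixed $v^1,\ldots,v^k$ the remaining supremum over a single unit vector recombines with them into one operator-norm evaluation: concretely $\abs{\dpartial\skal{v^1\cdots v^k,\mathfrak{h}^{(k)}f}}\le\tfrac12\abs{\mathfrak{h}^{(k+1)}f}_{\mathrm{op}}$ pointwise, uniformly in the $v^j$. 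The technically hardest step is then to reassemble these uniform-in-$(v^j)$ estimates into a bound on $\norm{f-\IE f}_p$ itself; one wants to organize the recursion so that the contributions of orders $k<d$ are seen only under an expectation---which is what yields the $\norm{\cdot}_{\mathrm{op},1}$ norms---while the order-$d$ contribution is the only one seen under a supremum over the sample point, which is what yields $\norm{\cdot}_{\mathrm{op},\infty}$, so that the exchange of the supremum over test tuples with the probabilistic quantities is used only where it costs nothing.
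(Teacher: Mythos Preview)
Your overall architecture is right and matches the paper: derive the moment growth
\[
\norm{f-\IE f}_p \le \sum_{k=1}^{d-1}(2\sigma^2(p-3/2))^{k/2}\norm{\mathfrak{h}^{(k)}f}_{\mathrm{op},1}+(2\sigma^2(p-3/2))^{d/2}\norm{\mathfrak{h}^{(d)}f}_{\mathrm{op},\infty}
\]
from the $\dpartial$-LSI via an Aida--Stroock type $L^p$ inequality, then convert to tails by Proposition~\ref{proposition:FromLpToCoM} (this is where the factor $d^2$ enters, not inside the moment recursion). You also correctly diagnose the obstacle: iterating the two-sided estimate on the scalar $\abs{\mathfrak{h}^{(k)}f}_{\mathrm{op}}$ produces only Hilbert--Schmidt norms, because when you differentiate a supremum each coordinate $j$ may select its own maximizing tuple.

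The gap is precisely the step you label ``technically hardest'' and leave open. Your proposed fix---freeze a test tuple $v^1,\ldots,v^k$, apply the first-order $L^p$ estimate to the scalar contraction $g_v=\skal{v^1\cdots v^k,\mathfrak{h}^{(k)}f}$, and observe that $\abs{\dpartial g_v}\le\tfrac12\abs{\mathfrak{h}^{(k+1)}f}_{\mathrm{op}}$ uniformly in $v$---gives you $\sup_v\norm{g_v-\IE g_v}_p\lesssim\sqrt{p}\,\norm{\abs{\mathfrak{h}^{(k+1)}f}_{\mathrm{op}}}_p$. But what the recursion needs is a bound on $\norm{\sup_v g_v-\IE\sup_v g_v}_p$, and there is no general mechanism to pass from the former to the latter; the supremum over the (uncountable) set of test tuples does not commute with the $L^p$ norm or the centering. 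So as written the argument does not close.

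The paper's device is exactly what is missing from your sketch: switch to the \emph{one-sided} versions at each iteration step. One uses the split $\norm{W}_p\le\IE W+\norm{(W-\IE W)_+}_p$ for $W=\abs{\mathfrak{h}^{(k)}f}_{\mathrm{op}}$, then the one-sided $L^p$ inequality $\norm{(W-\IE W)_+}_p\le(2\sigma^2(p-3/2))^{1/2}\norm{\mathfrak{h}^+W}_p$ (Proposition~\ref{proposition:momentinequality}, equation~\eqref{eqn:fMinusEfLP2}), and finally the pointwise inequality of Lemma~\ref{lemma:recursiveEstimateOperatorNorms}:
\[
\abs{\mathfrak{h}^+\abs{\mathfrak{h}^{(k)}f}_{\mathrm{op}}}\le\abs{\mathfrak{h}^{(k+1)}f}_{\mathrm{op}}.
\]
The reason the positive part repairs the operator-norm loss is that in $\mathfrak{h}^+_i W(X)=\sup_{x_i'}(W(X)-W(X_{i^c},x_i'))_+$ the first term $W(X)=\sup_v\skal{v^1\cdots v^k,\mathfrak{h}^{(k)}f(X)}$ is evaluated at the sample point $X$ and does not depend on $i$; hence its maximizer $\tilde v^1,\ldots,\tilde v^k$ (which may be taken nonnegative since the tensor $\mathfrak{h}^{(k)}f$ has nonnegative entries) is the \emph{same} for every $i$. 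Inserting this common $\tilde v$ into both terms and summing over $i$ recombines into a single operator-norm evaluation of $\mathfrak{h}^{(k+1)}f$. With the two-sided $\mathfrak{h}$ (or with your $\dpartial$-based differentiation of $\abs{\cdot}_{\mathrm{op}}$) the maximizer would depend on $i$, and the argument collapses---which is exactly the failure mode you describe but do not resolve.
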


Again, if $d=2$, assuming that $\mathbb{E}X_i = 0$, $\mathbb{E} X_i^2 = 1$, $\abs{X_i} \le M$ a.s. and $\mathbb{E}X_iX_j = 0$ if $i \ne j$, we arrive at a Hanson--Wright type inequality, this time including dependent situations. Similar results still hold if we remove the uncorrelatedness condition.

Let us discuss the $\mathfrak{d}$--LSI condition in more detail. First, any collection of random independent variables $X_1, \ldots, X_n$ with finitely many values satisfies a $\dpartial\mathrm{-LSI}(\sigma^2)$ with $\sigma^2$ depending on the minimal non-zero probability of the $X_i$ (cf. Proposition \ref{d-LSIs}). In this situation, Theorem \ref{theorem:Lpnormestimates} and Theorem \ref{theorem:partialLSITails} only differ by constants.

However, the $\dpartial$--LSI conditions also gives rise to numerous models of dependent random variables as in \cite[Proposition 1.1]{GSS18} (the Ising model) or \cite[Theorem 3.1]{SS18} (various different models). Let us recall some of them. The \emph{Ising model} is the probability measure on $\{\pm1\}^n$ defined by normalizing $\pi(\sigma) = \exp( \frac{1}{2} \sum_{i,j} J_{ij} \sigma_i \sigma_j + \sum_{i = 1}^n h_i \sigma_i )$ for a symmetric matrix $J = (J_{ij})$ with zero diagonal and some $h \in \mathbb{R}^n$. In \cite[Proposition 1.1]{GSS18}, we have shown that if $\max_{i = 1,\ldots,n} \sum_{j = 1}^n \abs{J_{ij}} \le 1-\alpha$ and $\max_{i \in [n]} \abs{h_i} \le \tilde{\alpha}$, the Ising model satisfies a $\dpartial\mathrm{-LSI}(\sigma^2)$ with $\sigma^2$ depending on $\alpha$ and $\tilde{\alpha}$ only. For the special case of $h = 0$ and $J_{ij} = \beta$ for all $i \ne j$, we obtain the \emph{Curie--Weiss model}. Here, the two conditions required above reduce to $\beta < 1$.

Another simple model in which a $\dpartial$--LSI holds is the \emph{random coloring model}. If $G = (V,E)$ is a finite graph and $C = \{1, \ldots, k\}$ is a set of colors, we denote by $\Omega_0 \subset C^V$ the set of all proper coloring, i.\,e. the set of all $\omega \in C^V$ such that $\{v,w\} \in E \Rightarrow \omega_v \ne \omega_w$. In \cite[Theorem 3.1]{SS18}, we have shown that the uniform distribution on $\Omega_0$ satisfies a $\dpartial$--LSI if the maximum degree $\Delta$ is uniformly bounded and $k \ge 2\Delta+1$ (strictly speaking, we consider sequences of graphs here). In \cite[Theorem 3.1]{SS18}, we moreover prove $\dpartial$--LSIs for the (vertex-weighted) \emph{exponential random graph model} and the \emph{hard-core model}. We will further discuss the exponential random graph model in Section \ref{subsection:Polyn+subgrc}.

The common feature in all these models is that the dependencies which appear can be controlled (e.\,g. by means of a coupling matrix which measures the interactions between the particles of the system under consideration, cf. \cite[Theorem 4.2]{GSS18}) in such a way that the model is not ``too far'' from a product measure. For instance, in the Curie--Weiss model, this just translates to $\beta < 1$.

As a final remark, we discuss the LSI property with respect to various difference operators in Section \ref{section:LSIs&DOs}. In particular, we show that the restriction to finite spaces which is implicit in Theorem \ref{theorem:partialLSITails} is natural since the $\dpartial\mathrm{-LSI}$ property requires the underlying space to be finite. By contrast, we prove that any set of independent random variables $X_1, \ldots, X_n$ satisfies an $\mathfrak{h}$--LSI$(1)$. However, it seems that it is not possible to use the entropy method based on $\mathfrak{h}$--LSIs.

The upper bound in Theorem \ref{theorem:partialLSITails} admits a ``uniform version'', i.\,e. we can prove deviation inequalities for suprema of functions, in the following sense. Let $\mathcal{F}$ be a family of uniformly bounded, real-valued, measurable functions and set
\begin{align}\label{eqn:gF}
g(X) \coloneqq g_{\mathcal{F}}(X) \coloneqq \sup_{f \in \mathcal{F}} \abs{f(X)}.
\end{align}

For any $d \in \IN$ and $j = 1,\ldots, d$ let $W_j = W_j(X) \coloneqq \sup_{f \in \mathcal{F}} \abs{\mathfrak{h}^{(j)} f(X)}_{\mathrm{op}}$.

\begin{theorem}\label{theorem:SupremaOfFunctions}
Assume that either $X_1, \ldots, X_n$ are independent or $X$ satisfies a $\dpartial\mathrm{-LSI}(\sigma^2)$ and let $g = g(X)$ be as in \eqref{eqn:gF}. With the same constant $C$ as in Theorem \ref{theorem:partialLSITails} or \ref{theorem:Lpnormestimates} respectively, we have for any $t \ge 0$ the deviation inequality
\[
\IP(g - \IE g \ge t) \le 2 \exp\Big( - \frac{1}{C} \min\Big( \min_{j = 1,\ldots, d-1} \Big( \frac{t}{\IE W_j} \Big)^{2/k}, \frac{t^{2/d}}{\norm{W_d}_\infty} \Big) \Big)
\]
\end{theorem}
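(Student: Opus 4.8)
The plan is to reduce Theorem~\ref{theorem:SupremaOfFunctions} to the non-uniform statements, Theorem~\ref{theorem:Lpnormestimates} and Theorem~\ref{theorem:partialLSITails}, by treating $g(X) = \sup_{f \in \mathcal{F}} \abs{f(X)}$ as a single bounded functional and controlling its higher-order difference tensors $\mathfrak{h}^{(j)} g$ pointwise by the suprema $W_j$. First I would observe that $g \in L^\infty(\IP)$ since $\mathcal{F}$ is uniformly bounded, so the hypotheses of the non-uniform theorems apply verbatim to $g$. The crux is the pointwise comparison: I claim that for every $j$ and every choice of distinct indices $i_1, \ldots, i_j$,
\[
  \babs{\big(\mathfrak{h}^{(j)} g(X)\big)_{i_1 \ldots i_j}} \le \sup_{f \in \mathcal{F}} \babs{\big(\mathfrak{h}^{(j)} f(X)\big)_{i_1 \ldots i_j}},
\]
and consequently $\abs{\mathfrak{h}^{(j)} g(X)}_{\mathrm{op}} \le \sup_{f \in \mathcal{F}} \abs{\mathfrak{h}^{(j)} f(X)}_{\mathrm{op}} = W_j(X)$. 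The operator-norm step is immediate once the coordinatewise bound is in hand, because the operator norm is the supremum over unit vectors of a multilinear form whose coefficients are the tensor entries, and replacing each entry by a pointwise larger quantity can only increase the value — more carefully, one bounds $\abs{A}_{\mathrm{op}} \le \abs{B}_{\mathrm{op}}$ whenever $\abs{A_{i_1 \ldots i_j}} \le B_{i_1 \ldots i_j}$ for all indices by passing through absolute values inside the sum defining the operator norm. (Here one uses that $\mathfrak{h}^{(j)} f$ has nonnegative entries by construction, so no sign issues arise.)

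The main work is therefore the coordinatewise estimate, and this is where I expect the only real subtlety to lie. By definition \eqref{eqn:DefiHigherOrderDerivatives}, $\big(\mathfrak{h}^{(j)} g\big)_{i_1 \ldots i_j} = \big\lVert \prod_{s=1}^j (\Id - T_{i_s}) g(X) \big\rVert_{i_1,\ldots,i_j,\infty}$, the $L^\infty$-norm over the coordinates $X_{i_1},\ldots,X_{i_j}$ and their primed copies. The operator $\prod_{s=1}^j(\Id - T_{i_s})$ expands into an alternating sum of $2^j$ terms $(-1)^{\abs{S}} T_{i_S} g$ over subsets $S \subseteq \{i_1,\ldots,i_j\}$. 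The subtlety is that $\sup_{f \in \mathcal{F}}$ does not commute with this alternating sum. I would handle this by the standard device for the supremum difference operator: fix a realization of all the relevant coordinates, and estimate $\babs{\prod_s (\Id - T_{i_s}) g}$ by choosing, for each of the $2^j$ arguments appearing, a function $f \in \mathcal{F}$ that nearly attains the supremum in $g$ at that argument, then bounding each of the $2^j$ telescoped differences of $f$ by the corresponding differences, which recombine into $\babs{\prod_s(\Id - T_{i_s}) f}$ — this is exactly the argument already used in the literature (e.g.\ in \cite{BGS18}) to show that higher-order difference operators contract under suprema. Alternatively, and perhaps more cleanly, one uses the subadditivity $\babs{\sup_f a_f - \sup_f b_f} \le \sup_f \abs{a_f - b_f}$ inductively in the index $s$: applying $(\Id - T_{i_1})$ to $g = \sup_f \abs{f}$ gives $\babs{(\Id - T_{i_1}) g(X)} \le \sup_f \babs{(\Id - T_{i_1})\abs{f}(X)} \le \sup_f \babs{(\Id - T_{i_1}) f(X)}$, and one iterates, at each stage passing the next $(\Id - T_{i_{s+1}})$ inside the supremum at the cost of nothing. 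Taking the $L^\infty$-norm over the coordinates then yields the claim; I would present this inductive version as the main lemma.

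With the pointwise bound $\abs{\mathfrak{h}^{(j)} g}_{\mathrm{op}} \le W_j$ established, the conclusion is immediate: applying Theorem~\ref{theorem:Lpnormestimates} (in the independent case) or Theorem~\ref{theorem:partialLSITails} (under the $\dpartial$--LSI) to the function $g$, we have $\norm{\mathfrak{h}^{(k)} g}_{\mathrm{op},1} = \IE \abs{\mathfrak{h}^{(k)} g}_{\mathrm{op}} \le \IE W_k$ for $k = 1, \ldots, d-1$, and $\norm{\mathfrak{h}^{(d)} g}_{\mathrm{op},\infty} = \esssup \abs{\mathfrak{h}^{(d)} g}_{\mathrm{op}} \le \norm{W_d}_\infty$. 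Substituting these into \eqref{eqn:LpNormEstimatesIndependentOnly} or \eqref{eqn:LpestimatesWithOPNorms} and using monotonicity of $t \mapsto (t/a)^{2/k}$ in $a$ gives exactly the stated bound
\[
  \IP(g - \IE g \ge t) \le 2 \exp\Big( -\frac{1}{C} \min\Big( \min_{j=1,\ldots,d-1} \big(t/\IE W_j\big)^{2/k}, \; t^{2/d}/\norm{W_d}_\infty \Big) \Big),
\]
with the same constant $C$ as in the respective non-uniform theorem. The only hypothesis to double-check is that the $W_j$ (and $g$) are measurable, which is covered by the blanket convention in the Notations paragraph that all suprema are reduced to countable ones; no further integrability is needed beyond the uniform boundedness of $\mathcal{F}$, which forces $W_d \in L^\infty$ automatically. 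I do not anticipate any genuine obstacle beyond carefully writing the inductive contraction lemma for $\mathfrak{h}^{(j)}$ under suprema.
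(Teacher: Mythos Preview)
Your central lemma --- the coordinatewise bound $\big(\mathfrak{h}^{(j)} g\big)_{i_1\ldots i_j} \le \sup_{f\in\mathcal{F}} \big(\mathfrak{h}^{(j)} f\big)_{i_1\ldots i_j}$ for $j\ge 2$ --- is false, and with it the whole reduction to Theorems~\ref{theorem:Lpnormestimates}/\ref{theorem:partialLSITails} collapses. A two-point counterexample: take $n=2$, $\mathcal{X}_1=\mathcal{X}_2=\{0,1\}$, and $\mathcal{F}=\{f_1,f_2\}$ with $f_1(x)=x_1$, $f_2(x)=x_2$. Then $g(x)=\max(x_1,x_2)$, and
\[
g(1,1)-g(0,1)-g(1,0)+g(0,0)=1-1-1+0=-1,
\]
so $\mathfrak{h}_{12}g=1$, whereas $\mathfrak{h}_{12}f_1=\mathfrak{h}_{12}f_2=0$. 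The flaw in your inductive sketch is that after one step you control $\abs{(\Id-T_{i_1})g}$, but $(\Id-T_{i_2})$ must be applied to $(\Id-T_{i_1})g$ itself (a signed quantity), not to its absolute value or to an upper bound for it; the inequality $\abs{\sup a_f-\sup b_f}\le\sup\abs{a_f-b_f}$ does not iterate through a second difference because the maximizer in $g$ may change across the four configurations. (This is also why your route would yield a two-sided bound on $\abs{g-\IE g}$, whereas the theorem only claims the upper tail --- a warning sign.)

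The paper's proof avoids this by never forming $\mathfrak{h}^{(j)}g$ for $j\ge 2$. It works at the $L^p$ level and uses only the \emph{first-order positive} difference $\mathfrak{h}^+$ at each stage: from $\norm{(g-\IE g)_+}_p\le c\sqrt{p}\,\norm{\mathfrak{h}^+g}_p$ one bounds $\abs{\mathfrak{h}^+g}\le W_1$ pointwise (Lemma~\ref{lemma:hPlusEstimateSupremum}; here the positive part lets one fix a single near-maximizer independent of the coordinate), then splits $\norm{W_1}_p\le \IE W_1+\norm{(W_1-\IE W_1)_+}_p$ and applies the moment inequality again to $W_1$. The recursion $\abs{\mathfrak{h}^+W_j}\le W_{j+1}$ (from Lemmas~\ref{lemma:recursiveEstimateOperatorNorms} and~\ref{lemma:hPlusEstimateSupremum} combined) drives the iteration. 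The essential mechanism --- freezing a maximizer thanks to the positive part --- is exactly what your two-sided $\mathfrak{h}$ cannot do, and the paper explicitly flags this limitation right after the statement of Theorem~\ref{theorem:SupremaOfFunctions}.
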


%The case of independent random variables has been proven in \cite{BBLM05} and is stated for completeness.

As mentioned before, Theorem \ref{theorem:SupremaOfFunctions} yields bounds for the upper tail only. The background is that the entropy method has certain limitations when it is applied to suprema of functions, cf. also Proposition \ref{prop:SupremaOfSums} or Theorem \ref{theorem:ChaosInIndependentOrPartialLSI} below. Roughly sketched, the reason is that when evaluating difference operators of suprema, if a positive part is involved we may typically choose a coordinate-independent maximizer of the terms involved. Without a positive part, this is no longer possible. See in particular the proof of Theorem \ref{theorem:ChaosInIndependentOrPartialLSI}, where we provide some further details.

Functionals of the form \eqref{eqn:gF} have been considered in various works, starting from the first results in \cite[Theorem 1.4]{Tal96a}, and continued in \cite[Th{\'e}or{\`e}me 1.1]{Rio02}, \cite[Theorem 3]{Ma00} and \cite[Theorem 2.3]{Bo02} in the special case of
\begin{equation}\label{eqn:SupremaOfSums}
  g(X) \coloneqq \sup_{f \in \mathcal{F}} \Big\lvert \sum_{j = 1}^n f(X_j) \Big\rvert.
\end{equation}
Further research has been done in \cite{KR05}, \cite[Section 3]{Sam07} and more recently \cite[Proposition 5.4]{Mar18}. In these works, Bennett-type inequalities have been proven for general independent random variables. Furthermore, \cite[Theorem 10]{BBLM05} treats the case $g(X) = \sup_{t \in \mathcal{T}} \sum_{i = 1}^n t_i X_i$ for Rademacher random variables $X_i$ and a compact set of vectors $\mathcal{T} \subset \IR^n$. As a byproduct of our method, we prove a deviation inequality for $g$ which can be regarded as a uniform bounded differences inequality.

\begin{proposition}\label{prop:SupremaOfSums}
Assume that $X = (X_1, \ldots, X_n)$ satisfies a $\dpartial\mathrm{-LSI}(\sigma^2)$, let $g = g(X)$ be as in \eqref{eqn:SupremaOfSums}, and let $c(f)$ be such that $\abs{f(x) - f(y)} \le c(f)$. For any $t \ge 0$ we have
\[
\IP\Big(g \ge \IE g + t\Big) \le 2 \exp\Big( - \frac{t^2}{15 \sigma^2 n \sup_{f \in \mathcal{F}} c(f)^2} \Big). %\le 2 \exp\Big( - \frac{\log(2)}{2e^2 \sigma^2} \frac{t^2}{\sum_{i = 1}^n c_i^2} \Big)
\]
\end{proposition}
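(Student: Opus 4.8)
The plan is to derive Proposition~\ref{prop:SupremaOfSums} as a special case of Theorem~\ref{theorem:partialLSITails} with $d = 1$, applied to the functional $g$ itself. The only substantial step is a pointwise bound on the first order difference operator $\mathfrak{h}^{(1)} g$; everything else is a direct substitution. (Alternatively one could run a one-step Herbst argument directly from the $\dpartial\mathrm{-LSI}$, but invoking Theorem~\ref{theorem:partialLSITails} is cleaner.)

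First I would estimate $\mathfrak{h}_i g(X)$ for each $i \in [n]$. Fixing $X_{i^c}$ together with the pair $(X_i, X_i')$ and writing $g(X) = \sup_{f \in \mathcal{F}} \abs{\sum_j f(X_j)}$ and $T_i g(X) = \sup_{f \in \mathcal{F}} \abs{\sum_{j \ne i} f(X_j) + f(X_i')}$, the elementary inequality $\abs{\sup_a \abs{u_a} - \sup_a \abs{v_a}} \le \sup_a \abs{u_a - v_a}$ together with the identity $\sum_j f(X_j) - \big(\sum_{j \ne i} f(X_j) + f(X_i')\big) = f(X_i) - f(X_i')$ gives
\[
\abs{g(X) - T_i g(X)} \le \sup_{f \in \mathcal{F}} \abs{f(X_i) - f(X_i')} \le \sup_{f \in \mathcal{F}} c(f).
\]
Since the right-hand side does not depend on $(X_i, X_i')$, taking the $L^\infty$-norm in these variables yields $\mathfrak{h}_i g(X) \le \sup_{f \in \mathcal{F}} c(f)$ for all $i$ and all $X$. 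Recalling that the operator norm of a $1$-tensor is just its Euclidean norm, this gives $\abs{\mathfrak{h}^{(1)} g(X)}_{\mathrm{op}} = \big(\sum_{i = 1}^n \mathfrak{h}_i g(X)^2\big)^{1/2} \le \sqrt{n}\, \sup_{f \in \mathcal{F}} c(f)$, hence $\norm{\mathfrak{h}^{(1)} g}_{\mathrm{op},\infty} \le \sqrt{n}\, \sup_{f \in \mathcal{F}} c(f)$.

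It then remains to apply Theorem~\ref{theorem:partialLSITails} with $d = 1$. For $d = 1$ the minimum over $k = 1,\ldots,d-1$ is vacuous, so with $C = 15 \sigma^2$ the theorem reads $\IP(\abs{g - \IE g} \ge t) \le 2 \exp\big(-\tfrac{1}{15\sigma^2} (t/\norm{\mathfrak{h}^{(1)} g}_{\mathrm{op},\infty})^2\big)$; inserting the bound from the previous paragraph yields $\IP(\abs{g - \IE g} \ge t) \le 2 \exp\big(-t^2/(15 \sigma^2 n \sup_{f \in \mathcal{F}} c(f)^2)\big)$, and passing to the upper tail is exactly the claim. (Implicitly one uses $g \in L^\infty(\IP)$; this is part of the standing assumptions, since a $\dpartial\mathrm{-LSI}(\sigma^2)$ forces the $\mathcal{X}_i$ to be finite and $\mathcal{F}$ is taken uniformly bounded.) I do not expect a genuine obstacle: the entire content is the pointwise estimate on $\mathfrak{h} g$, and the only point of interest there is that the supremum over $\mathcal{F}$ prevents any improvement of the dimensional factor $\sqrt{n}$ — each of the $n$ coordinates can individually contribute an oscillation of order $\sup_f c(f)$, with no cancellation across $\mathcal{F}$ available, in contrast to the Bennett-type inequalities for a single fixed $f$.
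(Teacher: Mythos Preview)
Your proof is correct. The approach is close to the paper's but packaged differently: the paper works directly with the one-sided operator $\mathfrak{h}^+$, invokes the moment inequality $\norm{(g-\IE g)_+}_p \le (2\sigma^2(p-3/2))^{1/2}\norm{\mathfrak{h}^+ g}_p$ from Proposition~\ref{proposition:momentinequality}, bounds $\mathfrak{h}_i^+ g$ by picking an (approximate) maximizer $\tilde f \in \mathcal F$, and then applies Proposition~\ref{proposition:FromLpToCoM}. You instead bound the full two-sided $\mathfrak{h}_i g$ via the elementary inequality $\abs{\sup_a\abs{u_a}-\sup_a\abs{v_a}}\le\sup_a\abs{u_a-v_a}$ and then invoke the already-packaged Theorem~\ref{theorem:partialLSITails} with $d=1$. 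The two routes use the same underlying machinery; your version is slightly slicker (no maximizer needed) and in fact yields the two-sided bound $\IP(\abs{g-\IE g}\ge t)$ rather than only the upper tail stated in the proposition.
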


Let us put Proposition \ref{prop:SupremaOfSums} into context. In the above mentioned works, the authors derive Bennett-type inequalities for independent random variables $X_1, \ldots, X_n$, whereas in our case the concentration inequalities have sub-Gaussian tails. It might be compared to the sub-Gaussian tail estimates for Bernoulli processes, see e.\,g. \cite[Theorem 5.3.2]{Tal14}. However, the $\dpartial\mathrm{-LSI}(\sigma^2)$ property is both more and less general. On the one hand, it is possible to include possibly dependent random vectors, but on the other hand for independent random variables it is only applicable if the $X_i$ take finitely many values.

% The tail estimates of Theorems \ref{theorem:Lpnormestimates}, \ref{theorem:partialLSITails} and \ref{theorem:SupremaOfFunctions} can also be sharpened by considering special classes of functionals, such as polynomials or suprema of chaos-type functionals. Note that in equations \eqref{eqn:LpNormEstimatesIndependentOnly} and \eqref{eqn:LpestimatesWithOPNorms} we consider the operator norm with respect to $\mathfrak{h}^{(k)} f$, which is a $k$-tensor with \emph{positive} entries. On the other hand, for polynomial chaoses we are able to derive results depending on $k$-tensors which will not necessarily have positive entries, possibly resulting in smaller operator norms. This will be part of the next subsection.

\subsection{Outline}
In Section \ref{section:Applications}, we present a number of applications and refinements of our main results. Section \ref{section:concentrationIneqUnderLSI} contains the proofs of our main theorems. The proofs of the results from Section \ref{section:Applications} is deferred to Section \ref{section:SupremaOfChaos}. We close out the paper by discussing different forms of logarithmic Sobolev inequalities with respect to various difference operators in the last Section \ref{section:LSIs&DOs}.

\section{Applications}\label{section:Applications}

In the sequel, we consider various situations in which our results can be applied. Some of them can be regarded as sharpenings of our main theorems for functions which have a special structure.

\subsection{Uniform bounds}\label{subsection:UnifBds}
If the functions under consideration are of polynomial type, we may somewhat refine the results from the previous section. Here we focus on uniform bounds as discussed in Theorem \ref{theorem:SupremaOfFunctions}.

Let $\mathcal{I}_{n,d}$ denote the family of subsets of $[n]$ with $d$ elements, fix a Banach space $(\mathcal{B}, \norm{\cdot})$ with its dual space $(\mathcal{B}^*, \norm{\cdot}_*)$, a compact subset $\mathcal{T} \subset \mathcal{B}^{\mathcal{I}_{n,d}}$ and let $\mathcal{B}_1^*$ be the $1$-ball in $\mathcal{B}^*$ with respect to $\norm{\cdot}_*$. Let $X = (X_1, \ldots, X_n)$ be a random vector with support in $[a,b]^n$ for some real numbers $a < b$ and define
\begin{align}		\label{eqn:fTauNew}
	f(X) \coloneqq f_{\mathcal{T}}(X) \coloneqq \sup_{t \in \mathcal{T}}  \Big \lVert \sum_{I \in \mathcal{I}_{n,d}} X_I t_I \Big\rVert,
\end{align} 
where $X_I \coloneqq \prod_{i \in I} X_i$. For any $k \in [d]$ we let
\begin{align}	\label{eqn:Wkallgemein}
\begin{split}
	W_k &\coloneqq \sup_{t \in \mathcal{T}} \sup_{v^* \in \mathcal{B}_1^*} \sup_{\substack{\alpha^1, \ldots, \alpha^k \in \IR^n \\ \abs{\alpha^i} \le 1}} v^*\Big( \sum_{\substack{i_1, \ldots, i_k \\ \mathrm{distinct}}} \alpha_{i_1}^1 \cdots \alpha_{i_k}^k \sum_{\substack{I \in \mathcal{I}_{n,d-k} \\ i_1, \ldots, i_k \notin I}} X_I t_{I \cup \{i_1, \ldots, i_k\} } \Big) \\
	&= \sup_{t \in \mathcal{T}} \sup_{\substack{\alpha^1, \ldots, \alpha^k \in \IR^n \\ \abs{\alpha^i} \le 1}} \Big\lVert\sum_{\substack{i_1, \ldots, i_{k} \\ \mathrm{distinct}}} \alpha_{i_1}^{1} \cdots \alpha_{i_k}^k \sum_{\substack{I \in \mathcal{I}_{n,d-k} \\ i_1, \ldots, i_k \notin I}} X_I t_{I \cup \{i_1,\ldots, i_k\}} \Big\rVert,
\end{split}
\end{align}
where for $k = d$ we use the convention $\mathcal{I}_{n,0} = \{ \emptyset \}$ and $X_{\emptyset} \coloneqq 1$.

One can interpret the quantities $W_k$ as follows: If $f_t(x) = \sum_{I \in \mathcal{I}_{n,d}} x_I t_I$ is the corresponding polynomial in $n$ variables, and $\nabla^{(k)} f_t(x)$ is the $k$-tensor of all partial derivatives of order $k$, then $W_k = \sup_{t \in \mathcal{T}} \abs{\nabla^{(k)} f_t(X)}_{\mathrm{op}}$. In this sense, we are considering the same quantities as in Theorem \ref{theorem:SupremaOfFunctions} but replace the difference operator $\mathfrak{h}$ by formal derivatives of the polynomial under consideration.

Furthermore, the concentration inequalities are phrased with the help of the quantities
\begin{align*}
	\tilde{W}_k &\coloneqq \sup_{\substack{\alpha^1, \ldots, \alpha^k \in \IR^n \\ \abs{\alpha^i} \le 1}} \sum_{\substack{i_1, \ldots, i_{k} \\ \mathrm{distinct}}} \alpha_{i_1}^1 \cdots \alpha_{i_k}^k \sup_{t \in \mathcal{T}}  \sup_{v^* \in \mathcal{B}_1^*} v^*\Big(\sum_{\substack{I \in \mathcal{I}_{n,d-k} \\ i_1, \ldots, i_k \notin I}} X_I t_{I \cup \{i_1, \ldots, i_k\} } \Big) \\
	&= \sup_{\substack{\alpha^1, \ldots, \alpha^k \in \IR^n \\ \abs{\alpha^i} \le 1}} \sum_{\substack{i_1, \ldots, i_{k} \\ \mathrm{distinct}}} \alpha_{i_1}^1 \cdots \alpha_{i_k}^k \sup_{t \in \mathcal{T}} \Big\lVert \sum_{\substack{I \in \mathcal{I}_{n,d-k} \\ i_1, \ldots, i_k \notin I}} X_I t_{I \cup \{i_1, \ldots, i_k\}} \Big\rVert.
\end{align*}
Clearly $\tilde{W}_k \ge W_k$ holds for all $k \in [d]$.

Concentration properties for functionals as in \eqref{eqn:fTauNew} have been studied for independent Rademacher variables $X_1, \ldots, X_n$ (i.\,e. $\IP(X_i = +1) = \IP(X_i = -1) = 1/2$) and $\mathcal{B} = \mathbb{R}$ in \cite[Theorem 14]{BBLM05} for all $d \ge 2$, and under certain technical assumptions in \cite{Ad15}. We prove deviation inequalities in the weakly dependent setting, and afterwards discuss how these compare to the particular result in \cite{BBLM05}. It is easily possible to derive a similar result for functions of independent random variables (in the spirit of Theorem \ref{theorem:Lpnormestimates}). As the corresponding proof is easily done by generalizing the proof of \cite[Theorem 14]{BBLM05}, we omit it.

\begin{theorem}			\label{theorem:ChaosInIndependentOrPartialLSI}
	Let $X = (X_1, \ldots, X_n)$ be a random vector in $\IR^n$ with support in $[a,b]^n$ satisfying a $\dpartial\mathrm{-LSI}(\sigma^2)$. For $f = f(X)$ as in \eqref{eqn:fTauNew} and all $p \ge 2$ we have
	\begin{align}		\label{eqn:LpEstimateNumber1}
		\norm{(f - \IE f)_+}_p &\le \sum_{j = 1}^d (2\sigma^2 (b-a)^2 (p-3/2))^{j/2} \IE W_j, \\
    \norm{f - \IE f}_p &\le \sum_{j = 1}^d (2(b-a)^2 p)^{j/2} \IE\tilde{W}_j. \label{eqn:LpEstimateNoPositivePart}
	\end{align}
	Consequently, for any $t \ge 0$
	\begin{equation}		\label{eqn:ConcentrationEstimateNumber1}
	\begin{split}
		\IP \left( f - \IE f \ge t \right) &\le 2 \exp \Big( - \frac{1}{2\sigma^2(b-a)^2} \min_{k=1,\ldots,d} \Big( \frac{t}{de \IE W_k} \Big)^{2/k} \Big) \\
		&\le 2 \exp \Big( -\frac{1}{2e^2\sigma^2(b-a)^2d^2} \min_{k=1,\ldots,d} \Big( \frac{t}{\IE W_k} \Big)^{2/k}  \Big)
	\end{split},
	\end{equation}
  and the same concentration inequalities hold with $\IE W_k$ replaced by $\IE \tilde{W}_k$.
% On the other hand, if $X$ consists of independent components and has support in $[a,b]^n$, then for $f$ as in \eqref{eqn:fTauNew} and all $p \ge 2$
% \begin{align}
%  	\norm{(f- \IE f)_+}_p &\le \sum_{j = 1}^d (2(b-a)^2 p)^{j/2} \IE W_j, \label{eqn:LpEstimatePositivePart} \\
% 	\norm{f - \IE f}_p &\le \sum_{j = 1}^d (2(b-a)^2 p)^{j/2} \IE\tilde{W}_j. \label{eqn:LpEstimateNoPositivePart}
% \end{align}
% Moreover, \eqref{eqn:LpEstimatePositivePart} implies one-sided deviation inequalities and \eqref{eqn:LpEstimateNoPositivePart} implies tail estimates as in \eqref{eqn:ConcentrationEstimateNumber1}, with $\IE W_k$ replaced by $\IE \tilde{W}_k$.
\end{theorem}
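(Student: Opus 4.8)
The plan is to prove the moment bounds \eqref{eqn:LpEstimateNumber1} and \eqref{eqn:LpEstimateNoPositivePart} first and then read off \eqref{eqn:ConcentrationEstimateNumber1} from \eqref{eqn:LpEstimateNumber1} by the standard lemma that turns polynomial moment growth into multilevel concentration: if a nonnegative $Z$ satisfies $\norm{Z}_p \le \sum_{j=1}^d a_j p^{j/2}$ for every $p \ge 2$, then optimising over $p$ in Markov's inequality for $Z^p$ gives a bound $\IP(Z \ge t) \le 2\exp\big(-\tfrac1C \min_{k\le d}(t/a_k)^{2/k}\big)$, and the two displayed forms of \eqref{eqn:ConcentrationEstimateNumber1}, including the constants $de$ and $e^2 d^2$, are exactly what this produces with $Z = (f-\IE f)_+$ (resp. $\abs{f-\IE f}$) and $a_j = (2\sigma^2(b-a)^2)^{j/2}\IE W_j$ (resp. $\IE\tilde{W}_j$). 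Thus everything rests on the two moment estimates; I concentrate on \eqref{eqn:LpEstimateNumber1}, following and slightly generalising the proof of \cite[Theorem 14]{BBLM05}. As usual one fixes a finite subset of $\mathcal T$ (and of $\mathcal B_1^*$) throughout and passes to the supremum only at the very end, so measurability plays no role.

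The engine is a one-sided moment recursion coming out of the $\dpartial\mathrm{-LSI}(\sigma^2)$, the same mechanism that underlies Theorem \ref{theorem:partialLSITails}. Running the entropy method on $(g - \IE g)_+$ for a bounded $g$ (equivalently, the Herbst--Aida--Stroock differentiation of $p \mapsto \norm{(g - \IE g)_+}_p$), the energy term that appears is not $\abs{\dpartial g}^2$ but its one-sided version $\abs{\dpartial^+ g}^2(x) \coloneqq \sum_{i=1}^n\big(g(x) - \inf_{x_i'}g(x_{i^c},x_i')\big)^2$; carrying the computation through and starting the integration at $p=2$ with the Poincaré inequality contained in the $\dpartial\mathrm{-LSI}$, one gets
\[
  \norm{(g - \IE g)_+}_p \;\le\; \sqrt{2\sigma^2\big(p - \tfrac32\big)}\;\norm{\,\abs{\dpartial^+ g}\,}_p, \qquad p \ge 2,
\]
the shift $p-3/2$ being what Poincaré forces at $p=2$.

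The heart of the matter is the evaluation of $\abs{\dpartial^+ g}$ when $g$ is a supremum of multilinear polynomials. Write $g(x) = \sup_{s\in S}h_s(x)$ with each $h_s$ an $\IR$-valued multilinear polynomial on $[a,b]^n$; for $g = f$ one takes $S = \mathcal T\times\mathcal B_1^*$ and $h_{(t,v^*)}(x) = v^*\big(\sum_I x_I t_I\big)$. Let $s^\ast = s^\ast(x)$ be a maximiser of $g(x) = \sup_s h_s(x)$ --- it is \emph{coordinate-independent}. Because of the positive part we may bound $g(x) = h_{s^\ast}(x)$ against $h_{s^\ast}$ evaluated elsewhere, so for every $i$,
\[
  g(x) - \inf_{x_i'}g(x_{i^c},x_i') \;\le\; h_{s^\ast}(x) - \inf_{x_i'}h_{s^\ast}(x_{i^c},x_i') \;=\; \sup_{x_i'}(x_i - x_i')\,\partial_i h_{s^\ast}(x_{i^c}) \;\le\; (b-a)\,\abs{\partial_i h_{s^\ast}(x)},
\]
whence $\abs{\dpartial^+ g}(x) \le (b-a)\,\abs{\nabla h_{s^\ast}(x)} \le (b-a)\,\sup_{s\in S}\abs{\nabla h_s(x)}$. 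For $g = f$ the last expression is exactly $(b-a)\,W_1(X)$, by the definition of $W_1$. Moreover $\sup_s\abs{\nabla h_s}$ is itself a supremum of multilinear polynomials --- of degree one less, with the index set enlarged by a unit vector $\alpha\in\IR^n$ --- so the same bound applied to it produces $W_2$, and iterating $k$ times produces $W_k$.

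Combining the two ingredients, put $K \coloneqq 2\sigma^2(b-a)^2(p - 3/2)$. The recursion applied to $g = f$ gives $\norm{(f-\IE f)_+}_p \le \sqrt{K}\,\norm{W_1}_p$. For $1 \le j \le d-1$ the quantity $W_j \ge 0$ is a supremum of multilinear polynomials, so splitting $\norm{W_j}_p \le \IE W_j + \norm{(W_j - \IE W_j)_+}_p$ and applying the recursion to $g = W_j$ yields $\norm{W_j}_p \le \IE W_j + \sqrt{K}\,\norm{W_{j+1}}_p$, while $W_d = \sup_{t\in\mathcal T}\abs{\nabla^{(d)}f_t(X)}_{\mathrm{op}}$ is deterministic, so $\norm{W_d}_p = \IE W_d$. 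Unrolling these $d$ inequalities gives precisely $\norm{(f-\IE f)_+}_p \le \sum_{j=1}^d K^{j/2}\IE W_j$, i.e. \eqref{eqn:LpEstimateNumber1}. For the two-sided bound \eqref{eqn:LpEstimateNoPositivePart} one runs the analogous scheme on $f$ and on $-f$ and adds; but $-f = \inf_s(-h_s)$ is an \emph{infimum} of multilinear polynomials, for which the coordinate-independent maximiser above is no longer available, so at each level the suprema over $s$ must be pulled inside the Euclidean sums entrywise --- precisely the passage from $W_j$ to $\tilde{W}_j$ --- and the (slightly different) constant comes out of this variant. The delicate point throughout is exactly this bookkeeping: one must check that a single, coordinate-independent maximiser really survives each successive differentiation and that the nested $L^p$-norms genuinely collapse to the deterministic expectations $\IE W_j$; absent the positive part both break down, which is the structural reason the two estimates \eqref{eqn:LpEstimateNumber1} and \eqref{eqn:LpEstimateNoPositivePart} differ.
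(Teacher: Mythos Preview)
Your proposal is correct and follows essentially the same route as the paper: the moment recursion \eqref{eqn:fMinusEfLP2} (your $\abs{\dpartial^+ g}$ coincides with the paper's $\abs{\mathfrak{h}^+ g}$), the pointwise bound $\abs{\mathfrak{h}^+ f} \le (b-a)W_1$ via a coordinate-independent maximiser $(\tilde t,\tilde v^*)$, the iteration $\abs{\mathfrak{h}^+ W_k} \le (b-a)W_{k+1}$, and finally Proposition~\ref{proposition:FromLpToCoM}. The only cosmetic difference is in the two-sided estimate \eqref{eqn:LpEstimateNoPositivePart}: the paper does not split into $f$ and $-f$ but works directly with the two-sided operator $\mathfrak{h}$, rewriting $\mathfrak{h}_i f(x) = \sup_{x_i'}\mathfrak{h}_i^+ f(x_{i^c},x_i')$ and observing that the maximiser now depends on $i$, which is exactly the mechanism that forces $\tilde W_j$ in place of $W_j$; your $f$/$-f$ variant leads to the same conclusion for the same structural reason.
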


Note that independent Rademacher random variables satisfy a $\dpartial\mathrm{-LSI}(1)$ (see e.\,g. \cite[Theorem 3]{Gr75} or \cite[Example 3.1]{DSC96}). Therefore, we get back \cite[Theorem 14]{BBLM05} from Theorem \ref{theorem:ChaosInIndependentOrPartialLSI} (with slightly different constants). However, Theorem \ref{theorem:ChaosInIndependentOrPartialLSI} moreover includes many models with dependencies like those discussed in the introduction. Therefore, it may be considered as a extension of \cite[Theorem 14]{BBLM05} to dependent situations and moreover to coefficients from any Banach space $\mathcal{B}$. For instance, we may consider an Ising chaos as a natural generalization of a Rademacher chaos to a dependent situation. In this case, Theorem \ref{theorem:ChaosInIndependentOrPartialLSI} yields that that we still obtain basically the same concentration properties if the dependencies are sufficiently weak (which is guaranteed by the conditions outlined in the introduction).

To illustrate our results further, let us consider the case of $d=2$ separately. Here we write
\begin{align*}
	T_1 &\coloneqq \IE W_1 = \IE \sup_{t \in \mathcal{T}} \sup_{v^* \in \mathcal{B}^*_1} \Big( \sum_{i = 1}^n \Big( \sum_{j = 1}^n X_j v^*(t_{ij}) \Big)^2 \Big)^{1/2} \\ %\IE\sup_{t \in \mathcal{T}} \sup_{v^* \in \mathcal{B}^*_1} \sup_{\substack{\alpha \in \IR^n \\ \abs{\alpha} = 1}} v^*\Big( \sum_{i = 1}^n \alpha^{(1)}_i \sum_{j = 1}^n X_j t_{ij} \Big) = 
	%= \IE \sup_{t \in \mathcal{T}} \left( \sum_{i = 1}^n \left( \sum_{j = 1}^n t_{ij} X_j \right)^2 \right)^{1/2} \\
	T_2 &\coloneqq \IE W_2 = \sup_{t \in \mathcal{T}} \sup_{v^* \in \mathcal{B}_1^*} \norm{(v^*(t_{ij}))_{i,j}}_{\mathrm{op}}.
\end{align*}
The following corollary follows directly from Theorem \ref{theorem:ChaosInIndependentOrPartialLSI}.

\begin{corollary}			\label{corollary:ChaosInBanachSpaces}
	Assume that $X = (X_1, \ldots, X_n)$ satisfies a $\dpartial\mathrm{-LSI}(\sigma^2)$ and is supported in $[a,b]^n$ and let $f_{\mathcal{T}} = f_{\mathcal{T}}(X)$ be as in \eqref{eqn:fTauNew} with $d = 2$. We have for all $t \ge 0$
	\begin{align*}
		\IP\left( f_{\mathcal{T}}(X) - \IE f_{\mathcal{T}}(X) \ge t \right) \le 2 \exp \Big( - \frac{1}{60(b-a)^2 \sigma^2} \min \Big( \frac{t^2}{T_1^2}, \frac{t}{T_2} \Big)\Big).
	\end{align*}
\end{corollary}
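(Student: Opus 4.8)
The plan is to specialize Theorem \ref{theorem:ChaosInIndependentOrPartialLSI} to $d = 2$ and then unwind the definitions of $W_1$ and $W_2$ in \eqref{eqn:Wkallgemein}. First I would invoke the second line of \eqref{eqn:ConcentrationEstimateNumber1} with $d = 2$, which reads
\[
\IP\big( f_{\mathcal{T}} - \IE f_{\mathcal{T}} \ge t \big) \le 2 \exp\Big( -\frac{1}{2 e^2 \sigma^2 (b-a)^2 \cdot 4} \min_{k = 1, 2} \Big( \frac{t}{\IE W_k} \Big)^{2/k} \Big).
\]
Since $8 e^2 < 60$, the constant in the exponent may be weakened to $60 (b-a)^2 \sigma^2$ in the denominator (the minimum is nonnegative, so this only increases the right-hand side), and $\min_{k=1,2}(t/\IE W_k)^{2/k} = \min\big( t^2 / (\IE W_1)^2,\, t / \IE W_2 \big)$.

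It then remains to identify $\IE W_1 = T_1$ and $\IE W_2 = T_2$ by plugging $d = 2$ into \eqref{eqn:Wkallgemein}. For $k = 1$ the inner index set is $\mathcal{I}_{n,1}$, i.e.\ singletons $\{j\}$ with $X_{\{j\}} = X_j$ and $t_{\{j\} \cup \{i\}} = t_{ij}$, so $W_1 = \sup_{t \in \mathcal{T}} \sup_{v^* \in \mathcal{B}_1^*} \sup_{\abs{\alpha} \le 1} \sum_i \alpha_i \sum_j X_j v^*(t_{ij})$; taking the supremum over $\alpha$ with $\abs{\alpha} \le 1$ via Cauchy--Schwarz gives $W_1 = \sup_{t \in \mathcal{T}} \sup_{v^* \in \mathcal{B}_1^*} \big( \sum_i ( \sum_j X_j v^*(t_{ij}) )^2 \big)^{1/2}$, hence $\IE W_1 = T_1$. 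For $k = d = 2$ the convention $\mathcal{I}_{n,0} = \{\emptyset\}$, $X_\emptyset = 1$ leaves the purely bilinear form $\sum_{i_1 \ne i_2} \alpha^1_{i_1} \alpha^2_{i_2} v^*(t_{i_1 i_2})$, whose supremum over unit vectors $\alpha^1, \alpha^2$ is exactly the operator norm of the matrix $(v^*(t_{ij}))_{i,j}$; thus $W_2 = \sup_{t \in \mathcal{T}} \sup_{v^* \in \mathcal{B}_1^*} \norm{(v^*(t_{ij}))_{i,j}}_{\mathrm{op}}$, which is deterministic and equals $\IE W_2 = T_2$.

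Combining the two steps yields the stated bound. There is essentially no obstacle; the only points needing care are the numerical check $8 e^2 < 60$ and the bookkeeping that converts the abstract suprema in \eqref{eqn:Wkallgemein} into the explicit quantities $T_1$ and $T_2$ — in particular the Cauchy--Schwarz reduction in the case $k = 1$ and the identification of the matrix operator norm with a bilinear supremum in the case $k = 2$.
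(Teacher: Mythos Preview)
Your proposal is correct and follows exactly the route the paper takes: the paper simply states that the corollary ``follows directly from Theorem \ref{theorem:ChaosInIndependentOrPartialLSI}'', and you have supplied precisely the missing bookkeeping --- specializing \eqref{eqn:ConcentrationEstimateNumber1} to $d=2$, checking $8e^2 < 60$, and unwinding \eqref{eqn:Wkallgemein} to recover $T_1 = \IE W_1$ and $T_2 = \IE W_2$ as defined just before the corollary.
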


For the case of independent Rademacher variables, this recovers the upper tail in a famous result by Talagrand \cite[Theorem 1.2]{Tal96a} on concentration properties of quadratic forms in Banach spaces, which has also been done in \cite{BBLM05}.
Note that for $\mathcal{B} = \mathbb{R}$, we have
\begin{equation*}
	T_1 = \IE \sup_{t \in \mathcal{T}} \Big( \sum_{i = 1}^n \Big( \sum_{j = 1}^n t_{ij} X_j \Big)^2 \Big)^{1/2},\qquad
	T_2 = \sup_{t \in \mathcal{T}} \abs{T}_{\mathrm{op}},
\end{equation*}
where $T$ is the symmetric matrix with zero diagonal and entries $T_{ij} = t_{ij}$ if $i<j$. If $\mathcal{T}$ consists of a single element only, we have $T_1 \le \abs{T}_\mathrm{HS}$. Hence, Corollary \ref{corollary:ChaosInBanachSpaces} can be regarded as a generalized Hanson--Wright inequality.

\subsection{The Boolean hypercube}
The case of independent Rademacher random variables above can be interpreted in terms of quantities from Boolean analysis. Recall that any function $f: \{-1,+1\}^n \to \IR$ can be decomposed using the orthonormal \emph{Fourier--Walsh basis} given by $(x_S)_{S \subseteq [n]}$ for $x_S \coloneqq \prod_{i \in S} x_i$. More precisely, we have
\[
f(x) = \sum_{S \subset [n]} \hat{f}_S x_S = \sum_{j \in [n]} \sum_{S \subseteq [n] : \abs{S} = j} \hat{f}_S x_S,
\]
where the $(\hat{f}_S)_{S \subset [n]}$ are given by $\hat{f}_S = \int x_S f d\mu$ and are called the \emph{Fourier coefficients} of $f$. For any $j \in [n]$ we define the \emph{Fourier weight of order $j$} as $W_j(f) \coloneqq \sum_{S \subseteq [n] : \abs{S} = j} \hat{f}_S^2$. It is clear that $\norm{f}_2^2 = \sum_{j = 0}^n W_j(f)$. The following multilevel concentration inequality can now be easily deduced.

\begin{proposition}\label{proposition:RademacherAsHypercube}
Let $X_1, \ldots, X_n$ be independent Rademacher random variables and let $f: \{1,+1\}^n \to \IR$ be a function given in the Fourier--Walsh basis as $f(x) = \sum_{j = 0}^d \hat{f}_S x_S$ for some $d \in \IN, d \le n$. For any $t > 0$ we have
\[
\IP(\abs{f(X) - \IE f(X)} \ge t) \le \exp\Big(1 - \min_{j = 1,\ldots,d} \Big( \frac{t}{de W_j(f)^{1/2}} \Big)^{2/j} \Big).
\]
In other words, the event $\abs{f(X) - \IE f(X)} \le de \max_{j =1,\ldots, d} (W_j(f) t^j)^{1/2}$ holds with probability at least $1 - \exp(1-t)$.
\end{proposition}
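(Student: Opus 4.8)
The plan is to bypass the abstract difference-operator machinery entirely and instead use the classical hypercontractive moment estimates for Rademacher chaos, followed by a routine passage from moment bounds to tail bounds.

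First I would decompose $f$ into its homogeneous Fourier parts. Writing $f_j(x) \coloneqq \sum_{\abs{S}=j}\hat f_S x_S$, we have $f = \sum_{j=0}^d f_j$, $\IE f(X) = \hat f_\emptyset$, and $f(X) - \IE f(X) = \sum_{j=1}^d f_j(X)$, with $\IE f_j(X)^2 = W_j(f)$ by orthonormality of the Fourier--Walsh system (levels $j$ with $W_j(f)=0$ may be discarded, and the statement is trivial if all of them vanish). The key input is hypercontractivity for Rademacher variables (\cite{Bo68,Bo70}): since $f_j(X)$ is a chaos of degree exactly $j$, one has $\norm{f_j(X)}_p \le (p-1)^{j/2}\norm{f_j(X)}_2 \le p^{j/2} W_j(f)^{1/2}$ for every $p \ge 2$. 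Summing over $j$ via the triangle inequality in $L^p$ gives, for all $p \ge 2$,
\begin{align*}
  \norm{f(X)-\IE f(X)}_p \;\le\; \sum_{j=1}^d p^{j/2} W_j(f)^{1/2} \;\le\; d \max_{j=1,\ldots,d}\big(p^j W_j(f)\big)^{1/2}.
\end{align*}

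It then remains to turn this moment growth into a tail bound, which I would do first in the ``in other words'' form. For a parameter $t \ge 2$, Markov's inequality at exponent $p = t$ together with the above bound and $e\,d\max_j(t^j W_j(f))^{1/2} \ge e\norm{f(X)-\IE f(X)}_t$ yields $\IP(\abs{f(X)-\IE f(X)} > de\max_j(W_j(f)t^j)^{1/2}) \le e^{-t}$. For $1 \le t < 2$ one instead observes that $t^{j/2}\ge 1$, hence $de\max_j(W_j(f)t^j)^{1/2} \ge de\max_j W_j(f)^{1/2} \ge e\,(\sum_{j=1}^d W_j(f))^{1/2} = e\norm{f(X)-\IE f(X)}_2$, and applies Chebyshev's inequality to get a bound of $e^{-2} \le e^{1-t}$; for $t < 1$ the claimed bound $\exp(1-t)$ is trivial. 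This establishes that $\abs{f(X)-\IE f(X)} \le de\max_{j=1,\ldots,d}(W_j(f)t^j)^{1/2}$ with probability at least $1 - \exp(1-t)$. The first displayed inequality of the proposition then follows by rearrangement: given a deviation level $u > 0$, setting $t \coloneqq \min_{j=1,\ldots,d}(u/(de\, W_j(f)^{1/2}))^{2/j}$ makes $de\max_j(W_j(f)t^j)^{1/2}$ equal to $u$ (at the minimizing index), so the previous step, applied at parameters $t' \uparrow t$ to handle the boundary, gives $\IP(\abs{f(X)-\IE f(X)} \ge u) \le \exp(1-t)$.

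Since Rademacher hypercontractivity is classical and the remaining steps are elementary, there is no genuine obstacle here; the only points requiring a little care are that the hypercontractive upper bound is available only for $p \ge 2$ (handled above by treating $t < 2$ separately) and the bookkeeping of the factor $d$ in $\sum_{j}\le d\max_{j}$, which is exactly what produces the constant $de$. As an alternative one could decompose into homogeneous parts and invoke Theorem~\ref{theorem:ChaosInIndependentOrPartialLSI} (independent Rademacher variables satisfy a $\dpartial\mathrm{-LSI}(1)$), but controlling the quantities $\IE \tilde W_k$ appearing there in terms of $W_j(f)^{1/2}$ introduces extra $j$-dependent factors, so the sharp constant $de$ genuinely rests on hypercontractivity.
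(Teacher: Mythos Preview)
Your proposal is correct and follows essentially the same route as the paper: both obtain the $L^p$ bound $\norm{f-\IE f}_p \le \sum_{j=1}^d (p-1)^{j/2} W_j(f)^{1/2}$ from Bonami hypercontractivity (the paper by citing the proof of \cite[Theorem 9.21]{OD14}, you by reproving it directly) and then convert it to a tail bound by optimizing the exponent in Markov's inequality. The only cosmetic difference is that you first establish the ``in other words'' form and then rearrange, whereas the paper goes directly to the displayed inequality via the auxiliary choice $p = \eta_f(t) \coloneqq 1 + \min_{j}\big(t/(de\,W_j(f)^{1/2})\big)^{2/j}$.
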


The literature on Boolean functions is vast, and a modern overview is given in \cite{OD14}. Especially for concentration results we may highlight \cite[Theorem 1.4]{AW15} (which in particular holds for Boolean functions), which we discuss further and partially generalize to dependent models in Section \ref{subsection:Polyn+subgrc}. Proposition \ref{proposition:RademacherAsHypercube} may be of interest due to the direct use of quantities from Fourier analysis. Finally, we should add that while many concentration results for Boolean functions like \cite[Theorem 1.4]{AW15} or also Proposition \ref{proposition:RademacherAsHypercube} are valid for functions whose Fourier--Walsh decomposition stops at some order $d$, Theorem \ref{theorem:Lpnormestimates} or Theorem \ref{theorem:partialLSITails} work for functions with Fourier--Walsh decomposition possibly up to order $n$.

\subsection{Concentration properties of $U$-statistics}
Another application of Theorems \ref{theorem:Lpnormestimates} and \ref{theorem:partialLSITails} are concentration properties of so-called \emph{$U$-statistics} which frequently arise in statistical theory. We refer to \cite{PG99} for an excellent monograph. More recently, concentration inequalities for $U$-statistics have been considered in \cite{Ad06}, \cite[Section 3.1.2]{AW15} and \cite[Corollary 1.3]{BGS18}.

Let $\mathcal{Y} = \mathcal{X}^n$ and assume that $X_1, \ldots, X_n$ are either independent random variables, or the vector $X = (X_1, \ldots, X_n)$ satisfies a $\dpartial\mathrm{-LSI}(\sigma^2)$. Let $h: \mathcal{X}^d \to \IR$ be a measurable, symmetric function with $h(X_{i_1}, \ldots, X_{i_d}) \in L^\infty(\IP)$ for any $i_1, \ldots, i_d$, and define $B \coloneqq \max_{i_1 \neq \ldots \neq i_d} \norm{h(X_{i_1}, \ldots, X_{i_d})}_{L^\infty(\IP)}$. We are interested in the concentration properties of the $U$-statistic with kernel $h$, i.\,e. of
\begin{align}\label{eqn:UStatisticKernelh}
	f(X) = \sum_{i_1 \neq \ldots \neq i_d} h(X_{i_1}, \ldots, X_{i_d}).
\end{align}

\begin{proposition}\label{proposition:UStatistics}
Let $X = (X_1, \ldots, X_n)$ be as above and $f = f(X)$ be as in \eqref{eqn:UStatisticKernelh}. There exists a constant $C > 0$ (the same as in Theorems \ref{theorem:Lpnormestimates} and \ref{theorem:partialLSITails}) such that for any $t \ge 0$
\[
\IP \Big( \abs{f - \IE f} \ge Bt \Big) \le 2 \exp\Big( -\frac{1}{C} \min_{k=1,\ldots, d} \Big( \frac{t}{\binom{d}{k}2^k n^{d-k/2}} \Big)^{2/k} \Big)
\]
and for some $C = C(d)$
\begin{equation}	\label{eqn:normalizedUStatistic}
\IP(n^{1/2-d} \abs{f - \IE f} \ge B t) \le 2 \exp\Big( - \frac{1}{4C} \min \Big( t^2, n^{1-1/d} t^{2/d} \Big) \Big).
\end{equation}
\end{proposition}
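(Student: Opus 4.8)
The strategy is simply to apply Theorem~\ref{theorem:Lpnormestimates} when $X_1, \ldots, X_n$ are independent and Theorem~\ref{theorem:partialLSITails} when $X$ satisfies a $\dpartial\mathrm{-LSI}(\sigma^2)$. Both are applicable since $f$, being a finite sum of the bounded functions $h(X_{i_1}, \ldots, X_{i_d})$, lies in $L^\infty(\IP)$. Thus the problem reduces to estimating the operator norms of the difference tensors $\mathfrak{h}^{(k)} f$ for $k = 1, \ldots, d$, and the key point is that for a $U$-statistic these estimates are \emph{deterministic}, so that $\norm{\mathfrak{h}^{(k)} f}_{\mathrm{op},1}$ and $\norm{\mathfrak{h}^{(d)} f}_{\mathrm{op},\infty}$ equal the quantities $B\binom{d}{k} 2^k n^{d-k/2}$ appearing in the statement.

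For the key estimate, fix pairwise distinct indices $i_1, \ldots, i_k$. Since $\mathrm{Id} - T_{i_s}$ annihilates every summand $h(X_{j_1}, \ldots, X_{j_d})$ of $f$ whose index set misses $i_s$, only summands indexed by a $d$-set containing all of $i_1, \ldots, i_k$ survive $\prod_{s=1}^k (\mathrm{Id} - T_{i_s})$, and there are at most $\binom{n-k}{d-k} \le \binom{d}{k} n^{d-k}$ of them. Expanding $\prod_{s=1}^k (\mathrm{Id} - T_{i_s})$ into its $2^k$ signed parts, each such summand becomes a combination of $2^k$ evaluations of $h$, each of $L^\infty$-norm at most $B$; hence
\[
\mathfrak{h}_{i_1 \ldots i_k} f(X) \le B \binom{d}{k} 2^k n^{d-k}
\]
uniformly in $X$. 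Contracting the tensor $\mathfrak{h}^{(k)} f$ with unit vectors $v^1, \ldots, v^k$ and bounding $\sum_{i_1, \ldots, i_k} \abs{v^1_{i_1}} \cdots \abs{v^k_{i_k}} = \prod_{j=1}^k \norm{v^j}_1 \le n^{k/2}$ then yields, still deterministically,
\[
\abs{\mathfrak{h}^{(k)} f}_{\mathrm{op}} \le B \binom{d}{k} 2^k n^{d-k/2}, \qquad k = 1, \ldots, d
\]
(for $k = d$ this is the degenerate case $n^{d-k} = 1$).

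Inserting these bounds into Theorem~\ref{theorem:Lpnormestimates} respectively Theorem~\ref{theorem:partialLSITails} and rescaling $t \mapsto Bt$ gives the first inequality. For \eqref{eqn:normalizedUStatistic} I would substitute $t \mapsto n^{d-1/2} t$ into the first inequality; since $n^{d-1/2}/n^{d-k/2} = n^{(k-1)/2}$, the $k$-th term of the minimum then equals $n^{1-1/k} t^{2/k}$ up to the purely $d$-dependent factor $(\binom{d}{k} 2^k)^{2/k}$, and one invokes the elementary identity
\[
\min_{1 \le k \le d} n^{1-1/k} t^{2/k} = \min\big(t^2,\, n^{1-1/d} t^{2/d}\big),
\]
which holds because $k \mapsto n^{1-1/k} t^{2/k}$ is monotone in $1/k$ with direction set by the sign of $t^2 - n$. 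Collecting the remaining $d$-dependent (and, in the dependent case, $\sigma^2$-dependent) constants into $C = C(d)$ finishes the proof.

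I expect the only step requiring genuine care to be the combinatorial bookkeeping in the second paragraph: one must pin down precisely which summands of $f$ survive the iterated difference operator and verify that their number stays of order $n^{d-k}$, so that the operator norm picks up the factor $n^{d-k/2}$ and not a larger power of $n$; the precise dimension-dependent prefactor is routine but depends mildly on the indexing convention for the $U$-statistic. Everything else is a direct appeal to the two main theorems together with elementary estimates.
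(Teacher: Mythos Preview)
Your proposal is correct and follows essentially the same route as the paper: apply Theorems~\ref{theorem:Lpnormestimates}/\ref{theorem:partialLSITails}, observe that $\prod_s(\mathrm{Id}-T_{i_s})$ kills every summand whose index set misses some $i_s$, bound each surviving contribution by $2^kB$, and then pass from the uniform entrywise bound to the operator-norm bound $Bn^{d-k/2}$ (the paper does this last step via $\abs{\cdot}_{\mathrm{op}}\le\abs{\cdot}_{\mathrm{HS}}$ rather than your $\ell^1$--$\ell^2$ estimate, but the two are equivalent here). Your derivation of \eqref{eqn:normalizedUStatistic} via the monotonicity of $k\mapsto n(t^2/n)^{1/k}$ is exactly what the paper does as well; your caveat about the $d$-dependent prefactor and the ordered/unordered indexing convention is apt but, as you say, only affects constants absorbed into $C(d)$.
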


The normalization $n^{1/2 - d}$ in \eqref{eqn:normalizedUStatistic} is of the right order for $U$-statistics generated by a non-degenerate kernel $h$, i.\,e. $\mathrm{Var}(\IE_{X_1} h(X_1, \ldots, X_d)) > 0$, see \cite[Remarks 4.2.5]{PG99}. In the case of i.i.d. random variables $X_1, \ldots, X_n$ it states that 
\[
\frac{1}{n^{d-1/2}} \sum_{i_1 < \ldots < i_d} h(X_{i_1}, \ldots, X_{i_d}) \Rightarrow \mathcal{N}(0, d^2 \Var(\IE_{X_1} h(X_1, \ldots, X_d)))
\] 
whenever $\IE h(X_1, \ldots, X_d)^2 < \infty$. Actually, \eqref{eqn:normalizedUStatistic} shows that for $t \le n^{1/2}$ we have sub-Gaussian tails for any finite $n \in \IN$ for bounded kernels $h$.

Proposition \ref{proposition:UStatistics} improves upon our old result \cite[Corollary 1.3]{BGS18} by providing multilevel tail bounds, thus yielding much finer estimates than the exponential moment bound given in the earlier paper. Moreover, it does not only address independent random variables but also weakly dependent models. As compared to the results from \cite{Ad06} and \cite[Section 3.1.2]{AW15}, Proposition \ref{proposition:UStatistics} covers different types of measures, since in \cite{Ad06} independent random variables were considered, while in \cite{AW15} a Sobolev-type inequality was required, which does not include the various discrete models for which a $\mathfrak{d}$--LSI holds.

\subsection{Polynomials and subgraph counts in exponential random graph models}\label{subsection:Polyn+subgrc}
Lastly, let us once again consider polynomial functions. The case of independent random variables has been treated in \cite[Theorem 1.4]{AW15} under more general conditions, so we omit it and concentrate on weakly dependent random variables.

Let $f_d: \IR^n \to \IR$ be a multilinear (also called tetrahedral) polynomial of degree $d$, i.\,e. of the form
\begin{align}			\label{eqn:Definitionfd}
	f_d(x) \coloneqq \sum_{k = 1}^d \sum_{1 \le i_1 \ne \ldots \ne i_k \le n} a^k_{i_1 \ldots i_k} x_{i_1} \cdots  x_{i_k}
% 	f_d(x) \coloneqq \sum_{k = 1}^d \sum_{\substack{I \subset \{1,\ldots, n\} \\ \abs{I} = k}} a^k_I x_I = \sum_{k=1}^d \skal{a^k, x \otimes \ldots \otimes x}
\end{align}
for symmetric $k$-tensors $a^k$ with vanishing diagonal. Here, a $k$-tensor $a^k$ is called symmetric, if $a^k_{i_1 \ldots i_k} = a^k_{\sigma(i_1) \ldots \sigma(i_k)}$ for any permutation $\sigma \in \mathcal{S}_k$, and the (generalized) diagonal is defined as $\Delta_k \coloneqq \{ (i_1, \ldots, i_k) : \abs{\{i_1, \ldots, i_k\}} <k \}$. Denote by $\nabla^{(k)} f$ the $k$-tensor of all partial derivatives of order $k$ of $f$.

For the next result, given some $d \in \IN$, we recall a family of norms $\norm{\cdot}_{\mathcal{I}}$ on the space of $d$-tensors for each partition $\mathcal{I} = \{ I_1, \ldots, I_k \}$ of $\{1,\ldots,d\}$. The family $\norm{\cdot}_{\mathcal{I}}$ has been first introduced in \cite{La06}, where it was used to prove two-sided estimates for $L^p$ norms of Gaussian chaos, and the definitions given below agree with the ones from \cite{La06} as well as \cite{AW15} and \cite{AKPS18}. For brevity, write $P_d$ for the set of all partitions of $\{1,\ldots, d\}$. For each $l = 1,\ldots, k$ we denote by $x^{(l)}$ a vector in $\IR^{n^{I_l}}$, and for a $d$-tensor $A = (a_{i_1, \ldots, i_d})$ set
\[
\norm{A}_{\mathcal{I}} \coloneqq \sup \Big\lbrace \sum_{i_1 \ldots i_d} a_{i_1 \ldots i_d} \prod_{l = 1}^k x^{(l)}_{i_{I_l}} : \sum_{i_{I_l}} (x^{(l)}_{i_{I_l}})^2 \le 1 \text{ for all } l = 1, \ldots, k\Big\rbrace.
\]
We can regard the $\norm{A}_{\mathcal{I}}$ as a family of operator-type norms. In particular, it is easy to see that $\norm{A}_{\{1, \ldots, d\}} = \abs{A}_\mathrm{HS}$ and $\norm{A}_{\{\{1\}, \ldots, \{d\}\}} = \abs{A}_\mathrm{op}$.
% Let us remark that $\norm{A}_{\mathcal{I}}$ can be understood as an operator-type norm as follows. Identify the space of all $d$-tensors with $\IR^{n^d}$ using some partial ordering and furthermore identify $\IR^{n^d} \simeq \bigotimes_{l = 1}^k \IR^{n^{I_l}}$ %\simeq  \bigotimes_{j = 1}^k \bigotimes_{l \in I_j} \IR^n, \]
% and for $x = x^{(1)} \otimes \ldots \otimes x^{(k)}$ define 
% \[
% 	\norm{x} \coloneqq \max_{l = 1, \ldots, k} \norm{x^{(l)}}_2 = \max_{l = 1,\ldots, k} \Big( \sum (x^{(l)}_{i_{I_l}})^2 \Big)^{1/2}.
% \]
% More precisely, we identify $x = x^{(1)} \otimes \ldots \otimes x^{(k)}$ with a $d$-tensor $i(x)$ given via $i(x)_{i_1,\ldots,i_d} = \prod_{l = 1}^k x^{(l)}_{i_{I_l}}$. For example, for $d = 4$ and $\mathcal{I} = \{ \{1,4\}, \{2,3\} \}$ we have two matrices $x^{(1)},x^{(2)}$ and $i(x^{(1)} \otimes x^{(2)})_{i_1, i_2, i_3, i_4} = x^{(1)}_{i_1 i_4} x^{(2)}_{i_2 i_3}$. Using this approach, we can understand any $d$-tensor $A$ as a $k$-linear functional on $\otimes_{l = 1}^k \IR^{n^{I_l}}$ by writing
% \[
% 	Ax = A(x^{(1)} \otimes \ldots \otimes x^{(k)}) = \skal{A, x^{(1)} \otimes \ldots \otimes x^{(k)}} = \sum_{J \in [n]^d} A_J \prod_{l = 1}^d x^{(l)}_{J_{I_l}},
% \]
% so that ultimately $\norm{A}_{\mathcal{I}} = \sup_{\norm{x}_{\mathcal{I}} \le 1} \abs{Ax}$. For instance, $\norm{A}_{\{1, \ldots, d\}} = \abs{A}_\mathrm{HS}$ and $\norm{A}_{\{\{1\}, \ldots, \{d\}\}} = \abs{A}_\mathrm{op}$.

The following result has been proven in the context of Ising models (in the Dobrushin uniqueness regime) in \cite{AKPS18}, and can easily be extended to any vector $X$ satisfying a $\dpartial\mathrm{-LSI}(\sigma^2)$. By invoking the family of norms $\norm{\cdot}_{\mathcal{I}}$, it provides a refinement of our general result for the special case of multilinear polynomials.

\begin{theorem}		\label{theorem:concentrationHLSI}
	Let $X$ be a random vector supported in $[-1,+1]^n$ and satisfying a $\dpartial\mathrm{-LSI}(\sigma^2)$, and $f_d = f_d(X)$ be as in \eqref{eqn:Definitionfd}. There exists a constant $C> 0$ depending on $d$ only such that for all $t \ge 0$
	\begin{equation}	\label{eqn:partialLSIpolynomial}
		\IP\left( \abs*{f_d - \IE f_d} \ge t\right) \le 2 \exp \Big( - \frac{1}{C} \min_{k = 1,\ldots,d} \min_{\mathcal{I} \in P_k} \left( \frac{t}{\sigma^{k} \norm{\IE \nabla^{(k)} f_d}_{\mathcal{I}}} \right)^{2/\abs{\mathcal{I}}} \Big).
	\end{equation}
\end{theorem}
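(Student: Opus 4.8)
The plan is to reduce the statement to an $L^p$-norm estimate for $f_d - \IE f_d$ and then convert moment bounds into tail bounds by the standard Markov/optimization argument that has already been used for the earlier theorems. First I would observe that the $\dpartial\mathrm{-LSI}(\sigma^2)$ hypothesis, combined with the Herbst-type / Aida--Stroock moment argument, yields a recursive inequality controlling $\norm{f_d - \IE f_d}_p$ in terms of $\norm{\,\abs{\dpartial f_d}\,}_p$; iterating this down the chain of formal derivatives $\nabla^{(1)} f_d, \ldots, \nabla^{(d)} f_d$ of the multilinear polynomial gives (up to constants depending on $d$ and powers of $\sigma$) a bound of the shape $\norm{f_d - \IE f_d}_p \le \sum_{k=1}^d (C\sigma^2 p)^{k/2} \norm{\abs{\nabla^{(k)} f_d(X)}}$-type terms, since for a multilinear polynomial of degree $d$ the $(d+1)$-st derivative vanishes and $\dpartial_i f_d$ is again (essentially) a multilinear polynomial of degree $d-1$ whose coefficients are the partial derivatives of $f_d$. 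This is exactly the mechanism behind Theorem \ref{theorem:ChaosInIndependentOrPartialLSI}, so I would try to quote or mildly adapt that argument rather than redo it.

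The new ingredient, and the step I expect to be the main obstacle, is replacing the crude operator/Hilbert--Schmidt norms of $\nabla^{(k)} f_d$ by the full family $\norm{\IE\nabla^{(k)} f_d}_{\mathcal{I}}$ indexed by partitions $\mathcal{I} \in P_k$. Here I would follow Latała's chaos estimates \cite{La06} together with the adaptation in \cite{AW15} and \cite{AKPS18}: the point is that $\norm{\abs{\nabla^{(k)} f_d(X)}}_p$ (a random tensor norm of a polynomial chaos of the remaining variables) is itself controlled, via a further application of the $\dpartial$-LSI / hypercontractive-type bound in each block of variables, by a sum over refinements of the trivial partition of the deterministic norms $\norm{\IE \nabla^{(k)} f_d}_{\mathcal{I}}$ with the appropriate power $p^{\abs{\mathcal{I}}/2}$. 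Concretely, one writes the Euclidean norm of the random $k$-tensor as a supremum over unit vectors, pulls the chaos of the $X$-variables inside, and applies the moment recursion blockwise; the bookkeeping of which partition arises at which level of the recursion, and checking that the boundedness $X \in [-1,1]^n$ feeds in the right constants, is the delicate part.

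Finally, once the moment bound
\[
\norm{f_d - \IE f_d}_p \le \sum_{k=1}^d \sum_{\mathcal{I} \in P_k} (C\sigma^2 p)^{\abs{\mathcal{I}}/2}\, \sigma^{k-\abs{\mathcal{I}}}\,\norm{\IE\nabla^{(k)} f_d}_{\mathcal{I}}
\]
(or a variant of this form) is established for all $p \ge 2$, the tail bound \eqref{eqn:partialLSIpolynomial} follows by the now-routine device of choosing $p$ so as to balance each term against $t$: Markov's inequality gives $\IP(\abs{f_d - \IE f_d} \ge e \sum_k \sum_{\mathcal{I}} (C\sigma^2 p)^{\abs{\mathcal{I}}/2}\sigma^{k-\abs{\mathcal{I}}}\norm{\IE\nabla^{(k)} f_d}_{\mathcal{I}}) \le e^{-p}$, and optimizing over $p$ produces the minimum over $k$ and over $\mathcal{I} \in P_k$ of $(t/(\sigma^k \norm{\IE\nabla^{(k)} f_d}_{\mathcal{I}}))^{2/\abs{\mathcal{I}}}$, absorbing all numerical and $d$-dependent factors into the constant $C$. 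Since this last passage is identical in spirit to the derivation of \eqref{eqn:ConcentrationEstimateNumber1} from \eqref{eqn:LpEstimateNumber1}, I would keep it brief and refer back to that computation, and I would cite \cite{AKPS18} for the Ising case with the remark that the proof there only uses the $\dpartial$-LSI and hence goes through verbatim in the present generality.
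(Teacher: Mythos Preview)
Your overall plan coincides with the paper's: derive an $L^p$ bound via the $\dpartial$-LSI moment recursion, bring in the partition norms through Lata{\l}a, then convert to tails. Since the paper itself gives only a sketch and refers to \cite[Proof of Theorem 2.2]{AKPS18} for the details, your remark that the argument there uses only the $\dpartial$-LSI and hence carries over verbatim is exactly the paper's point.

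One correction to your description of the key step: the partition norms $\norm{\cdot}_{\mathcal{I}}$ do \emph{not} appear by ``applying the $\dpartial$-LSI / hypercontractive bound blockwise in the $X$-variables''. The mechanism in both the paper and \cite{AKPS18} is a Gaussian comparison. Iterating the moment inequality and centering at each step leads to
\[
\norm{f_d - \IE f_d}_p \le \sum_{k=1}^d (C\sigma^2)^{k/2}\, \bigl\lVert \skal{\IE_X \nabla^{(k)} f_d(X),\, G_1 \otimes \cdots \otimes G_k} \bigr\rVert_p,
\]
where $G_1,\ldots,G_k$ are auxiliary i.i.d.\ standard Gaussians independent of $X$; only \emph{then} does Lata{\l}a's two-sided estimate \cite{La06} for Gaussian chaos convert each of these $L^p$-norms into $\sum_{\mathcal{I}\in P_k} p^{\abs{\mathcal{I}}/2}\norm{\IE \nabla^{(k)} f_d}_{\mathcal{I}}$. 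The partition structure and the exponent $\abs{\mathcal{I}}/2$ on $p$ come from the Gaussian chaos estimate, not from the LSI recursion itself. This also explains the exact shape of your target moment bound (your displayed inequality is the right one). Once this is in hand, the passage to \eqref{eqn:partialLSIpolynomial} via Markov and optimization over $p$ is, as you say, routine.
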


For illustration, let us once again consider the case of $d=2$. In the notation of \eqref{eqn:Definitionfd}, we take $a^1 = 0$ and $a^2 = A$, i.\,e. $f_2(x) = x^T A x$ for a symmetric matrix $A$ with vanishing diagonal. In this case, assuming the components of $X$ to be centered (so the the $k=1$ term vanishes), Theorem \ref{theorem:concentrationHLSI} reads
	\[
		\IP\left( \abs*{f_2 - \IE f_2} \ge t\right) \le 2 \exp \Big( -\frac{1}{C} \min \Big(\frac{t^2}{\sigma^4\abs{A}_\mathrm{HS}^2}, \frac{t}{\sigma^2\abs{A}_\mathrm{op}} \Big) \Big),
	\]
i.\,e. we obtain a Hanson--Wright inequality in this situation. For higher orders, we arrive at similar bounds. Altogether, for the class of multilinear polynomials, Theorem \ref{theorem:concentrationHLSI} yields finer bounds than Theorem \ref{theorem:partialLSITails} (by virtue of the large class of norms involved), though for $d \ge 3$ explicit calculations of the norms involved can be difficult.

To point out one possible application, Theorem \ref{theorem:concentrationHLSI} can be used in the context of the exponential random graph model (ERGM). Let us briefly recall the definitions. Given $s \in \IN$ real numbers $\beta_1, \ldots, \beta_s$ and simple graphs $G_1, \ldots, G_s$ (with $G_1$ being a single edge by convention), the ERGM with parameter $\mathbf{\beta} = (\beta_1, \ldots, \beta_s, G_1, \ldots, G_s)$ is a probability measure on the space of all graphs on $n \in \IN$ vertices given by the weight function $\exp\left( \sum_{i = 1}^s \beta_i n^{-\abs{V_i}+2} N_{G_i}(x) \right)$, where $N_{G_i}(x)$ is the number of copies of $G_i$ in the graph $x$ and $\abs{V_i}$ is the number of vertices of $G_i = (V_i, E_i)$. For details, see \cite{CD13} or \cite{SS18}. One can think of the ERGM as an extension of the famous Erd\"{o}s--R\'{e}nyi model (which corresponds to the choice $s=1$) to account for dependencies between the edges.
 
By way of example we show concentration properties of the number of triangles $T_3(X) = \sum_{\{e,f,g\} \in \mathcal{T}_3} X_e X_f X_g$ (where $\mathcal{T}_3$ denotes the set of all three edges forming a triangle). To formulate our results, we need to recall the function $\Phi_{\bb}(x) = \sum_{i=1}^s \beta_i \abs{E_i}x^{\abs{E_i}-1}$ which frequently appears in the discussion of the ERGM. Moreover, we set $\abs{\bb} \coloneqq (\abs{\beta_1}, \ldots, \abs{\beta_s})$. In the following corollary, the condition $\frac{1}{2} \Phi_{\abs{\bb}}'(1) < 1$ ensures weak dependence in the sense that a $\dpartial$--LSI holds. As outlined above, in comparison to earlier results like \cite[Theorem 3.2]{SS18}, using Theorem \ref{theorem:concentrationHLSI} yields sharper tail estimates.

\begin{corollary}		\label{corollary:ERGMTriangle}
	Let $X$ be an exponential random graph model with parameter $\bb =(\beta_1, \ldots, \beta_s, G_1, \ldots, G_s)$ such that $\frac{1}{2} \Phi_{\abs{\bb}}'(1) < 1$. There is a constant $C(\bb)$ such that for all $t \ge 0$
	\begin{align*}
		&\IP\left( \abs{T_3 - \IE T_3} \ge t \right) \\
		&\le 2 \exp \left( - \frac{1}{C(\bb)} \min \left( \frac{t^2}{\max(C_{S_2} n^4,C_E n^{3}, n^{3})}, \frac{t}{\max(\sqrt{2n},2C_E n)}, \frac{t^{2/3}}{2} \right) \right).
	\end{align*}
\end{corollary}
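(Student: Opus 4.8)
The plan is to realise $T_3 = f_3(X)$ as a multilinear polynomial of degree $d = 3$ in the $N = \binom{n}{2}$ edge indicators $X_e \in \{0,1\} \subseteq [-1,+1]$, namely $f_3$ of the form \eqref{eqn:Definitionfd} with $a^1 = a^2 = 0$ and $a^3_{efg} = \tfrac16$ for each ordered triple of distinct edges forming a triangle (and $0$ otherwise); in particular $\nabla^{(3)} f_3 = A$ is the deterministic \emph{triangle tensor} $A_{efg} = \mathbbm{1}[\{e,f,g\} \in \mathcal{T}_3]$. Since $\tfrac12 \Phi_{\abs{\bb}}'(1) < 1$, the ERGM satisfies a $\dpartial\mathrm{-LSI}(\sigma^2)$ with $\sigma^2 = \sigma^2(\bb)$ by \cite[Theorem 3.1]{SS18}, so Theorem \ref{theorem:concentrationHLSI} applies (its ``$n$'' being our $N$) and gives
\[
\IP\left( \abs{T_3 - \IE T_3} \ge t \right) \le 2 \exp\Big( -\frac{1}{C} \min_{k = 1,2,3} \min_{\mathcal{I} \in P_k} \Big( \frac{t}{\sigma^k \norm{\IE\nabla^{(k)} f_3}_{\mathcal{I}}} \Big)^{2/\abs{\mathcal{I}}} \Big)
\]
with $C$ depending only on $d = 3$. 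It then remains to bound the norms $\norm{\IE\nabla^{(k)} f_3}_{\mathcal{I}}$ for $k \in \{1,2,3\}$ and every partition $\mathcal{I}$ of $[k]$, to regroup the resulting terms according to $\abs{\mathcal{I}} \in \{1,2,3\}$, and to absorb the boundedly many powers of $\sigma(\bb)$ together with the dimension-only constant into a single $C(\bb)$ (note that keeping only some of the terms in the minimum only weakens the bound).

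The core of the argument is estimating the norms of the deterministic tensor $A = \nabla^{(3)} f_3$. Writing edges as vertex pairs, one checks: (i) $\abs{A}_\mathrm{HS}^2 = 6\binom{n}{3} \le n^3$, so $\norm{A}_{\{1,2,3\}} = \abs{A}_\mathrm{HS} \le n^{3/2}$; (ii) for a two-block partition such as $\{\{1,2\},\{3\}\}$, since for adjacent edges $e,f$ the third triangle edge $g(e,f)$ is uniquely determined one has $\sum_g A_{efg} w_g = \mathbbm{1}[e,f \text{ adjacent}]\, w_{g(e,f)}$, whence Cauchy--Schwarz together with $\#\{(e,f) : e,f \text{ adjacent},\ g(e,f) = g\} = 2(n-2)$ gives $\norm{A}_{\{\{1,2\},\{3\}\}} \le \sqrt{2n}$ (and likewise for the other two two-block partitions, by the full symmetry of $A$); (iii) for $\mathcal{I} = \{\{1\},\{2\},\{3\}\}$ one rewrites $\sum_{e,f,g} A_{efg} u_e v_f w_g = \sum_{a,b,c \text{ distinct}} u_{\{b,c\}} v_{\{c,a\}} w_{\{a,b\}}$, sums over the shared vertex $a$ last, applies Cauchy--Schwarz in $(b,c)$ against the $n \times n$ matrix $(\abs{u_{\{b,c\}}})_{b,c}$ whose Hilbert--Schmidt norm is $\sqrt{2}\,\abs{u}$, and finishes with one more Cauchy--Schwarz over $a$, obtaining the dimension-free bound $\abs{A}_\mathrm{op} \le 2\sqrt{2}$.

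For the lower-order tensors, $\IE\nabla^{(2)} f_3$ has $(e,f)$-entry $\mathbbm{1}[e,f \text{ adjacent}]\, \IE X_{g(e,f)}$, which is bounded by $C_E := \max_e \IE X_e$ on its support (the line-graph pattern of $K_n$); counting the $\le n^3$ ordered adjacent pairs gives $\abs{\IE\nabla^{(2)} f_3}_\mathrm{HS} \le C_E\, n^{3/2}$, while the maximal absolute row sum $2(n-2) C_E \le 2 C_E n$ bounds $\abs{\IE\nabla^{(2)} f_3}_\mathrm{op}$. For $k = 1$, $(\IE\nabla f_3)_e = \sum_c \IE[X_{\{a,c\}} X_{\{b,c\}}]$ for $e = \{a,b\}$, so bounding the two-point functions and summing yields $\norm{\IE\nabla^{(1)} f_3}_{\{\{1\}\}} = \abs{\IE\nabla f_3} \le C_{S_2}^{1/2}\, n^2$ for a constant $C_{S_2} = C_{S_2}(\bb)$ governing the pair correlations. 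Plugging all of these bounds into the displayed inequality and retaining in each of the groups $\abs{\mathcal{I}} = 1$, $\abs{\mathcal{I}} = 2$, $\abs{\mathcal{I}} = 3$ only the smallest term — which, after the powers of $\sigma(\bb)$ have been pulled out, are $t^2 / \max(C_{S_2} n^4, C_E n^3, n^3)$, $t / \max(\sqrt{2n}, 2 C_E n)$, and $(t / 2\sqrt{2})^{2/3} = t^{2/3}/2$ respectively — and absorbing all remaining constants (the $d$-dependent constant and the powers of $\sigma(\bb)$) into $C(\bb)$ produces exactly the stated inequality.

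The main obstacle is item (iii), the dimension-free operator-norm bound $\abs{A}_\mathrm{op} \le 2\sqrt{2}$ for the triangle tensor, together with item (ii) for the two-block norms; these are elementary but hinge on carefully exploiting the edge/vertex correspondence and on repeated applications of Cauchy--Schwarz. The remaining ingredients — verifying the polynomial form of $T_3$, quoting the $\dpartial$-LSI for the ERGM, and collecting the $\sigma$-powers and the one- and two-point constants $C_E, C_{S_2}$ (which are finite, and indeed bounded by absolute constants since the $X_e$ take values in $[0,1]$) — amount to routine bookkeeping.
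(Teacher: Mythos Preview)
Your proposal is correct and follows essentially the same route as the paper: quote the $\dpartial$-LSI from \cite{SS18}, apply Theorem~\ref{theorem:concentrationHLSI}, and bound each norm $\norm{\IE\nabla^{(k)} f_3}_{\mathcal{I}}$ for $k\in\{1,2,3\}$ and $\mathcal{I}\in P_k$. The only differences are cosmetic: you supply direct Cauchy--Schwarz arguments for the operator-type norms of the triangle tensor (your items (ii) and (iii)) where the paper simply cites \cite[Section~5.1]{AW15}, and the paper fixes $C_{S_2}\coloneqq\IE X_{S_2}$ precisely (using that in the ERGM $\IE X_G$ depends only on the isomorphism type of $G$), so that the $k=1$ bound reads $\abs{\IE\nabla f_3}\le C_{S_2}\,n^2$ rather than your $C_{S_2}^{1/2}\,n^2$.
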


\section{Concentration inequalities under logarithmic Sobolev inequalities: Proofs}			\label{section:concentrationIneqUnderLSI}
In this section, we give the proofs of our main results. All of them work by first establishing a growth rate on the $L^p$ norms of $f - \IE f$ which will then be iterated. For technical reasons, we need to introduce some auxiliary difference operators which are closely related to $\mathfrak{h}$. For $i \in [n]$ let
\begin{equation*} %\label{eqn:h+}
\mathfrak{h}^+_if(X) = \lVert (f(X) - T_if(X))_+ \rVert_{X_i', \infty},
\qquad \mathfrak{h}^+f = (\mathfrak{h}^+_1f, \ldots, \mathfrak{h}^+_nf),
\end{equation*}
\begin{equation*}
%\label{eqn:h-}
\mathfrak{h}^-_if(X) = \lVert (f(X) - T_if(X))_- \rVert_{X_i', \infty},
\qquad \mathfrak{h}^-f = (\mathfrak{h}^-_1f, \ldots, \mathfrak{h}^-_nf),
\end{equation*}
where $\norm{f}_{X_i',\infty}$ shall denote the $L^\infty$ norm with respect to $X_i'$.

The $L^p$ norm inequalities which form the core of our proofs can be found in \cite[Theorem 2.3, Corollary 2.6]{BGS18} (building upon the earlier results in \cite{BBLM05}). Note that as compared to \cite{BGS18}, a different choice of normalization for $\mathfrak{h}^\pm$ leads to slightly different constants. %, and we skip the proofs.

\begin{theorem}				\label{theorem:BBLM}
If $X_1, \ldots, X_n$ are independent random variables and $f = f(X) \in L^\infty(\IP)$, with the constant $\kappa = \frac{\sqrt{e}}{2\,(\sqrt{e} - 1)}$, we have for any $p \ge 2$,
\begin{equation*} % \label{eqn:BBLM1}
\lVert (f - \mathbb{E}f)_+ \rVert_p \le (2\kappa p)^{1/2}\, \lVert \mathfrak{h}^+f \rVert_p \quad \text{and} \quad
%\end{equation*}
%\begin{equation*} %\label{eqn:BBLM2}
\lVert (f - \mathbb{E}f)_- \rVert_p \le (2\kappa p)^{1/2}\, \lVert \mathfrak{h}^-f \rVert_p.
\end{equation*}
Consequently, this leads to 
\[
	\norm{f - \IE f}_p \le (8\kappa p)^{1/2} \norm{\mathfrak{h}f}_p.
\]
\end{theorem}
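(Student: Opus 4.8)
The plan is to obtain Theorem~\ref{theorem:BBLM} from the moment inequalities of Boucheron--Bousquet--Lugosi--Massart \cite{BBLM05}, recorded in the form we need in \cite[Theorem~2.3, Corollary~2.6]{BGS18}, by relating the ``conditional variance'' proxies appearing there to the sup-type operators $\mathfrak{h}^\pm$. Recall that \cite{BBLM05} works with
\[
  V^+(X) \coloneqq \sum_{i=1}^n \IE'\big[(f(X) - T_if(X))_+^2\big], \qquad V^-(X) \coloneqq \sum_{i=1}^n \IE'\big[(f(X) - T_if(X))_-^2\big],
\]
where $\IE'$ denotes integration over the independent copy $X'$ only, and — via subadditivity of $\phi$-entropy for the power functions $\phi(x) = x^\theta$, tensorized over the independent coordinates, together with a recursion in the moment order — proves that for every $q \ge 2$,
\[
  \norm{(f - \IE f)_+}_q \le (2\kappa q)^{1/2}\, \norm*{\sqrt{V^+}}_q, \qquad \norm{(f - \IE f)_-}_q \le (2\kappa q)^{1/2}\, \norm*{\sqrt{V^-}}_q,
\]
with $\kappa = \sqrt e/(2(\sqrt e - 1))$. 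I would simply cite these two estimates (up to the choice of normalization of $\mathfrak{h}^\pm$, which only changes the constant by a harmless factor); a self-contained derivation is available in \cite{BBLM05,BGS18} and I would not reproduce it.

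The only genuinely new — and elementary — step is the pointwise comparison $V^\pm \le \abs{\mathfrak{h}^\pm f}^2$. For each $i$, the essential supremum over $X_i'$ dominates the $L^2(X_i')$-average, so
\[
  \IE'\big[(f(X) - T_if(X))_\pm^2\big] \le \big\lVert (f(X) - T_if(X))_\pm \big\rVert_{X_i',\infty}^2 = \big(\mathfrak{h}^\pm_i f(X)\big)^2,
\]
and summing over $i$ gives $V^\pm(X) \le \abs{\mathfrak{h}^\pm f(X)}^2$ pointwise. Taking $L^{q/2}$-norms yields $\norm*{\sqrt{V^\pm}}_q = \norm{V^\pm}_{q/2}^{1/2} \le \norm{\mathfrak{h}^\pm f}_q$, and plugging this into the two moment inequalities above establishes the first pair of claimed bounds.

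For the consequence, I combine the one-sided bounds by the triangle inequality in $L^p$,
\[
  \norm{f - \IE f}_p \le \norm{(f - \IE f)_+}_p + \norm{(f - \IE f)_-}_p \le (2\kappa p)^{1/2}\big(\norm{\mathfrak{h}^+ f}_p + \norm{\mathfrak{h}^- f}_p\big),
\]
and use that $(a)_\pm \le \abs a$ together with $\lVert\cdot\rVert_{X_i',\infty} \le \lVert\cdot\rVert_{i,\infty}$, which give $\mathfrak{h}^\pm_i f \le \mathfrak{h}_i f$ for every $i$, hence $\abs{\mathfrak{h}^\pm f} \le \abs{\mathfrak{h} f}$ pointwise and $\norm{\mathfrak{h}^\pm f}_p \le \norm{\mathfrak{h} f}_p$. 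This gives $\norm{f - \IE f}_p \le 2(2\kappa p)^{1/2}\norm{\mathfrak{h} f}_p = (8\kappa p)^{1/2}\norm{\mathfrak{h} f}_p$. The main obstacle is the first step: all the analytic content (the $\phi$-entropy inequality, tensorization, and the moment recursion producing the constant $\kappa$) lies in \cite{BBLM05}, which we import wholesale; for the present statement the remaining work is just the sup-versus-average bound and the bookkeeping of constants, both routine.
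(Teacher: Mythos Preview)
Your proposal is correct and matches the paper's approach exactly: the paper does not prove Theorem~\ref{theorem:BBLM} at all but simply cites \cite[Theorem~2.3, Corollary~2.6]{BGS18} (built on \cite{BBLM05}) and remarks that the different normalization of $\mathfrak{h}^\pm$ accounts for the constants. Your additional elementary steps (the pointwise bound $V^\pm \le \abs{\mathfrak{h}^\pm f}^2$ and the combination via $\mathfrak{h}^\pm_i f \le \mathfrak{h}_i f$) just spell out that normalization remark, and are correct.
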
 
% Alternatively, we could define the difference operators in terms of a majorizing measurable function. Thus we assume that $\mathfrak{h}_{i_1 \ldots i_d} f(X)$ (and $\mathfrak{h}_i^+ f(X)$) are measurable for any $d \in \IN$ and $i_1, \ldots, i_d$. 
%The vector $\mathfrak{h}^+ f$ and the tensors $\mathfrak{h}^{(d)} f$ can be regarded as elements of the Euclidean spaces $\IR^n$ and $\IR^{n^d}$ respectively. It is easily seen that for a $1$-tensor (i.\,e. a vector) we have $\abs{A}_\mathrm{op} = \abs{A}$ and for any $d \ge 2$ and any $d$-tensor $A$ $\abs{A}_\mathrm{op} \le \abs{A}.$

Furthermore, we need an auxiliary statement relating differences of consecutive order. In \cite{BGS18}, we have proven that $\abs{\mathfrak{h}\abs{\mathfrak{h}^{(d)}f}_\mathrm{HS}} \le \abs{\mathfrak{h}^{(d+1)}f}_\mathrm{HS}$. Moreover, we explained that a similar estimate with the Hilbert--Schmidt replaced by operator norms cannot be true. As we will see next, the key step in order to be able to invoke operator norms nevertheless is to work with $\mathfrak{h}^+$.

Here we need the following simple but crucial observation: if $A$ is a $d$-tensor, the supremum in the definition of $\abs{A}_\mathrm{op}$ is attained, and if $A$ is a non-negative tensor (i.\,e. $A_{i_1 \ldots i_d} \ge 0$ for all $i_1, \ldots, i_d$), the maximizing vectors $\tilde{v}^1, \ldots, \tilde{v}^d$ can be chosen to have all positive entries. Indeed, since $\tilde{v}^1_{i_1} \cdots \tilde{v}^d_{i_d} \le \abs{\tilde{v}^1_{i_1} \cdots \tilde{v}^d_{i_d}}$, we can define $\abs{\tilde{v}}^j$ by taking the absolute value element-wise.

\begin{lemma}			\label{lemma:recursiveEstimateOperatorNorms}
For any $d \ge 2$
\[
	\abs{\mathfrak{h}^+ \abs{\mathfrak{h}^{(d-1)} f(X)}_\mathrm{op}} \le \abs{\mathfrak{h}^{(d)} f(X)}_\mathrm{op}.
\]
\end{lemma}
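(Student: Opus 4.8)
Fix a realisation of $X$; the claim will be established pointwise. Abbreviate $g(x) \coloneqq \abs{\mathfrak{h}^{(d-1)}f(x)}_{\mathrm{op}}$, so that the left-hand side of the lemma is $\abs{\mathfrak{h}^+ g(X)}$. Since $\mathfrak{h}^+ g(X) = (\mathfrak{h}^+_1 g(X), \ldots, \mathfrak{h}^+_n g(X))$ has non-negative entries, duality gives $\abs{\mathfrak{h}^+ g(X)} = \sup_{w \in \IR^n,\, \abs{w} \le 1,\, w \ge 0} \sum_{i=1}^n w_i\, \mathfrak{h}^+_i g(X)$, so it suffices to bound $\sum_i w_i\, \mathfrak{h}^+_i g(X)$ for a fixed such $w$. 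Moreover $\mathfrak{h}^+_i g(X) = \sup_{x_i'} \big( g(X) - g(X_{i^c}, x_i') \big)_+$, and it is enough to bound $\big(g(X) - g(X_{i^c}, x_i')\big)_+$ for each fixed $x_i'$.

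The key step is the observation recalled just before the lemma: since $\mathfrak{h}^{(d-1)}f(X)$ is a non-negative tensor, we may pick maximising vectors $\tilde v^1, \ldots, \tilde v^{d-1}$ with $\abs{\tilde v^j} \le 1$ and \emph{all entries non-negative}, so that $g(X) = \sum_{j_1, \ldots, j_{d-1}} \tilde v^1_{j_1} \cdots \tilde v^{d-1}_{j_{d-1}} \big(\mathfrak{h}^{(d-1)}f(X)\big)_{j_1 \ldots j_{d-1}}$. Crucially these vectors do not depend on $i$. Using them as (generally sub-optimal) test vectors for $\abs{\mathfrak{h}^{(d-1)}f(X_{i^c}, x_i')}_{\mathrm{op}}$ and subtracting, it remains to prove the coordinatewise estimate
\[
\big(\mathfrak{h}^{(d-1)}f(X)\big)_{j_1 \ldots j_{d-1}} - \big(\mathfrak{h}^{(d-1)}f(X_{i^c}, x_i')\big)_{j_1 \ldots j_{d-1}} \le \big(\mathfrak{h}^{(d)}f(X)\big)_{i j_1 \ldots j_{d-1}}
\]
for all indices. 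Here one distinguishes two cases. If $i \in \{j_1, \ldots, j_{d-1}\}$ or the $j_s$ are not pairwise distinct, both sides vanish: the right-hand side because the index tuple repeats, and the left-hand side because $\mathfrak{h}_{j_1 \ldots j_{d-1}}f(X)$ has already been $L^\infty$-averaged over the variables $X_{j_s}$ (and their copies) and hence does not depend on $X_i$, so substituting $x_i'$ changes nothing. If $i, j_1, \ldots, j_{d-1}$ are pairwise distinct, write $\mathfrak{h}_{j_1 \ldots j_{d-1}}f = \lVert \psi \rVert_{j_1, \ldots, j_{d-1}, \infty}$ with $\psi \coloneqq \prod_{s=1}^{d-1}(\mathrm{Id} - T_{j_s})f$, apply the reverse triangle inequality $\lVert a \rVert_\infty - \lVert b \rVert_\infty \le \lVert a - b \rVert_\infty$ for the $L^\infty$-norm in $X_{j_1}, \ldots, X_{j_{d-1}}$, and use that $\mathrm{Id} - T_i$ commutes with each $\mathrm{Id} - T_{j_s}$ (distinct indices) together with the fact that enlarging the set of variables over which one takes an $L^\infty$-norm only increases it; this yields exactly $\mathfrak{h}_{i j_1 \ldots j_{d-1}}f(X) = \big(\mathfrak{h}^{(d)}f(X)\big)_{i j_1 \ldots j_{d-1}}$.

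Putting these together, the resulting bound $\sum_{j_1, \ldots, j_{d-1}} \tilde v^1_{j_1} \cdots \tilde v^{d-1}_{j_{d-1}} \big(\mathfrak{h}^{(d)}f(X)\big)_{i j_1 \ldots j_{d-1}}$ is non-negative (all factors are non-negative), so the positive part is harmless and the supremum over $x_i'$ is already accounted for; hence $\mathfrak{h}^+_i g(X)$ is dominated by this quantity. Multiplying by $w_i \ge 0$, summing over $i$, and recognising the result as $\skal{w \otimes \tilde v^1 \otimes \cdots \otimes \tilde v^{d-1},\, \mathfrak{h}^{(d)}f(X)}$ — a contraction of $\mathfrak{h}^{(d)}f(X)$ against $d$ vectors all lying in the Euclidean unit ball — the definition of the operator norm gives $\sum_i w_i\, \mathfrak{h}^+_i g(X) \le \abs{\mathfrak{h}^{(d)}f(X)}_{\mathrm{op}}$; taking the supremum over $w$ completes the proof. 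The one genuinely delicate point is the case distinction in the coordinatewise estimate — especially noticing that when $i$ coincides with some $j_s$ the difference is not merely small but exactly zero — and, conceptually, the realisation that passing to positive parts is precisely what lets a single choice of maximising vectors $\tilde v^j$ be reused uniformly in $i$ (with $\mathfrak{h}$ in place of $\mathfrak{h}^+$ one would have to use the $i$-dependent maximisers of $g(X_{i^c}, x_i')$, which destroys the tensor structure).
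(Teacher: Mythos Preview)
Your proof is correct and follows essentially the same route as the paper's: both arguments fix the non-negative maximisers $\tilde v^1,\ldots,\tilde v^{d-1}$ of $\abs{\mathfrak{h}^{(d-1)}f(X)}_{\mathrm{op}}$, use them as test vectors in the second operator norm to bound the difference, apply the reverse triangle inequality coordinatewise, and then recognise the resulting expression as a contraction of $\mathfrak{h}^{(d)}f(X)$ against $d$ unit vectors. The only differences are cosmetic --- you dualise $\abs{\mathfrak{h}^+ g}$ against a non-negative unit vector $w$ where the paper simply squares, and you make the degenerate case $i\in\{j_1,\ldots,j_{d-1}\}$ explicit (the paper handles it implicitly).
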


\begin{proof}
We have
\begin{align*}
	&\abs{\mathfrak{h}^+ \abs{\mathfrak{h}^{(d-1)} f}_\mathrm{op}}^2 = \sum_{i= 1}^n \norm*{\left( \abs{\mathfrak{h}^{(d-1)} f}_\mathrm{op} - \abs{\mathfrak{h}^{(d-1)} T_i f}_\mathrm{op} \right)_+}_{i,\infty}^2 \\
	&= \sum_{i=1}^n \bnorm{\left( \sup_{v^j} \skal{v^1 \cdots v^{d-1}, \mathfrak{h}^{(d-1)}f} - \sup_{v^j} \skal{v^1 \cdots v^{d-1}, \mathfrak{h}^{(d-1)}T_i f} \right)_+}_{i,\infty}^2 \\
 	&\le \sum_{i = 1}^n \bnorm{\bigg(\skal{\tilde{v}^1\cdots \tilde{v}^{d-1}, \mathfrak{h}^{(d-1)}f - \mathfrak{h}^{(d-1)} T_i f} \bigg)_+}_{i,\infty}^2 \\
	%%&= \sum_{i = 1}^n \bnorm{\bigg( \sum_{i_1 \ldots i_{d-1}} \tilde{v}^1_{i_1} \cdots \tilde{v}^{d-1}_{i_{d-1}} \bigg( \bnorm{\prod_{j =1}^{d-1} (\Id - T_{i_s})f }_{i_1 \cdots i_{d-1},\infty} - \bnorm{\prod_{j =1}^{d-1} (\Id - T_{i_s}) T_i f}_{i_1 \cdots i_{d-1},\infty} \bigg) \bigg)_+}_{i,\infty}^2 \\
 	&\le \sum_{i = 1}^n \bnorm{\sum_{i_1,\ldots,i_d-1} \tilde{v}^1_{i_1} \cdots \tilde{v}^{d-1}_{i_{d-1}} \bnorm{(\Id - T_i) \prod_{j = 1}^{d-1} (\Id - T_{i_s})f}_{i_1 \cdots i_{d-1},\infty} }_{i,\infty}^2 \\
 	&\le \sum_{i =1}^n \bigg( \sum_{i_1, \ldots, i_{d-1}} \tilde{v}^1_{i_1} \cdots \tilde{v}^{d-1}_{i_1} \mathfrak{h}_{i i_1 \cdots i_{d-1}}f \bigg)^2 \\
 	&= \bigg( \sup_{v^d : \abs{v^d} \le 1} \sum_{i_d=1}^n \sum_{i_1, \ldots, i_{d-1}} \tilde{v}_{i_1}^1 \cdots \tilde{v}^{d-1}_{i_{d-1}} v^{d}_{i_d} \mathfrak{h}_{i_1 \cdots i_d} f \bigg)^2 \\
 	&\le \bigg( \sup_{v^1, \ldots, v^d : \abs{v^j} \le 1} \sum_{i_1, \ldots, i_d} v_{i_1}^1 \cdots v_{i_d}^d \mathfrak{h}_{i_1 \cdots i_d}f \bigg)^2 \\
 	&= \abs{\mathfrak{h}^{(d)}f}_\mathrm{op}^2
\end{align*}
Here, in the first inequality we insert the vectors $\tilde{v}^1, \ldots, \tilde{v}^{d-1}$ maximizing the supremum and use the monotonicity of $x \mapsto x_+$, and the second and third inequality follow from the triangle inequality. Taking the square root yields the claim.
\end{proof}

As a final step, we need to establish a connection between $L^p$ norm estimates and multilevel concentration inequalities. This is given by the following proposition, which was proven in \cite[Theorem 7]{Ad06} and \cite[Theorem 3.3]{AW15}. We state it in the form given in \cite[Proof of Theorem 3.6]{SS18} with slight modifications.

\begin{proposition}			\label{proposition:FromLpToCoM}
	Assume that a random variable $f$ satisfies for any $p \ge 2$ and some constants $C_1, \ldots, C_d \ge 0$ $\norm{f - \IE f}_p \le \sum_{k = 1}^{d} C_k (p-s)^{k/2}$ for some $s \in [0,2)$,
	and let $L \coloneqq \abs{\{ l : C_l > 0\}}$. For any $t \ge 0$ we have
	\begin{align*}
			\IP(\abs{f - \IE f} \ge t) \le 2 \exp \Big( - \min\Big(\frac{\log(2)}{2-s},1\Big) \min_{k = 1, \ldots, d} \Big( \frac{t}{Le C_k} \Big)^{2/k} \Big).
	\end{align*}
\end{proposition}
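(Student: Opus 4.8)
The plan is to run the standard "moments-to-tails" argument, optimizing over $p$, but with care to track the shift $s$ and the number $L$ of nonzero coefficients. First I would dispose of the trivial zero terms: since only $L$ of the constants $C_1,\dots,C_d$ are positive, the bound $\norm{f-\IE f}_p \le \sum_{k=1}^d C_k(p-s)^{k/2}$ really is a sum of at most $L$ terms, and by the elementary inequality $\sum_{k\in S} a_k \le L \max_{k\in S} a_k$ we get $\norm{f-\IE f}_p \le L \max_{k=1,\dots,d} C_k (p-s)^{k/2}$ for every $p \ge 2$ (with the convention that terms with $C_k = 0$ are simply absent). This turns the polynomial-in-$p$ growth into a single power-type bound that is easy to invert.

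Next, by Markov's inequality applied to the $p$-th moment, for any $p \ge 2$ and $t \ge 0$,
\[
\IP(\abs{f - \IE f} \ge t) \le t^{-p}\, \norm{f - \IE f}_p^p \le \Big( \frac{L \max_k C_k (p-s)^{k/2}}{t} \Big)^p.
\]
The goal is to choose $p$ (as a function of $t$) so that the bracketed quantity is at most $e^{-1}$, or more precisely so that $p\log(\cdots) \le -\,\mathrm{(something)}$ of the desired order. The natural choice is to take $p - s$ proportional to $\min_{k}(t/(Le C_k))^{2/k}$; call this quantity $m(t)$. With $p = s + m(t)$ one checks term by term that $L C_k (p-s)^{k/2} = LC_k\, m(t)^{k/2} \le LC_k (t/(LeC_k))^{1} = t/e$ for each relevant $k$, using $m(t) \le (t/(LeC_k))^{2/k}$; hence the bracket is $\le e^{-1}$ and $\IP(\abs{f-\IE f}\ge t) \le e^{-p} = e^{-s}e^{-m(t)}$. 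The only subtlety is that this requires $p \ge 2$, i.e.\ $m(t) \ge 2 - s$; for small $t$ where this fails one uses $p = 2$ instead (or simply notes the bound $2e^{-(\cdots)}$ is vacuous since the right-hand side exceeds $1$), and the factor $\min(\log 2/(2-s), 1)$ together with the leading $2$ is exactly what absorbs this boundary regime — this bookkeeping is the one genuinely fiddly point, and it is where the constant $\log(2)/(2-s)$ enters, essentially because at $p=2$ one has $e^{-2} \ge \tfrac12 e^{-\cdots}$-type comparisons.

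I expect the main obstacle to be precisely this boundary analysis: making the single clean statement
\[
\IP(\abs{f-\IE f}\ge t) \le 2\exp\Big(-\min\Big(\tfrac{\log 2}{2-s},1\Big)\min_{k=1,\dots,d}\Big(\tfrac{t}{LeC_k}\Big)^{2/k}\Big)
\]
valid for \emph{all} $t \ge 0$ simultaneously, rather than just for $t$ large enough that the optimal $p$ exceeds $2$. The trick is to interpolate: for $m(t) \ge 2-s$ the argument above gives decay $e^{-s-m(t)} \le e^{-m(t)}$; for $m(t) < 2-s$ one rescales, observing that $\min(\log 2/(2-s),1)\, m(t) \le \log 2$, so $2\exp(-\min(\log 2/(2-s),1) m(t)) \ge 2 e^{-\log 2} = 1$ and the inequality holds trivially. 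Stitching these two regimes and checking that the constant in the exponent can be taken uniformly as $\min(\log 2/(2-s),1)$ finishes the proof; everything else is the routine Markov-plus-optimization computation sketched above, and since the statement is quoted from \cite{Ad06,AW15,SS18} one may also simply cite it, but the self-contained argument is as described.
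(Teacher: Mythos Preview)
Your proposal is correct and follows essentially the same route the paper indicates (it does not spell out a proof but refers to \cite{Ad06,AW15,SS18} and to the nearly identical argument for Proposition~\ref{proposition:RademacherAsHypercube}): apply Markov's inequality with the $p$-th moment, choose $p-s = m(t) = \min_k (t/(LeC_k))^{2/k}$ so that each term $LC_k(p-s)^{k/2} \le t/e$, and absorb the constraint $p\ge 2$ by noting that for $m(t)<2-s$ the stated bound is at least $1$ thanks to the prefactor $\min(\log 2/(2-s),1)$. Your handling of the boundary regime is exactly the computation that makes the inequality hold for all $t\ge 0$.
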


We will not give a proof of Proposition \ref{proposition:FromLpToCoM} and refer to the aforementioned works. However, the proof is almost identical to the proof of Proposition \ref{proposition:RademacherAsHypercube}. The two important cases will be $s = 0$ (for independent random variables) as well as $s = 3/2$ (in the weakly dependent setting).

% Another ingredient will be the \emph{Poincar{\'e} inequality}
% \begin{align} \label{eqn:PoincareInequality}
% 	\Var_{\mu}(f) \le \sigma^2 \int \Gamma(f)^2 d\mu,
% \end{align}
% where $\Var_{\mu}(f) = \int f^2 d\mu - \left( \int f d\mu \right)^2$ is the variance functional. This can be deduced from the properties of a difference operators and \eqref{eqn:defiLSImitGamma} by a Taylor expansion of $x \log x$ and is by now classical. For the details see e.\,g. \cite[Lemma 3.1]{DSC96}.

The proof of Theorem \ref{theorem:Lpnormestimates} is now easily completed.

\begin{proof}[Proof of Theorem \ref{theorem:Lpnormestimates}]
Since $X_1, \ldots, X_n$ are independent, Theorem \ref{theorem:BBLM} yields
	\begin{align*}
		\norm{f - \IE f}_p \le (8\kappa p)^{1/2} \norm{\mathfrak{h}f}_p \le (8\kappa p)^{1/2} \norm{\mathfrak{h}f}_{\mathrm{op},1} + (8\kappa p)^{1/2} \norm{(\abs{\mathfrak{h} f} - \IE \abs{\mathfrak{h} f})_+}_p
	\end{align*}
	where we have used that for any positive random variable $W$
	\begin{align}		\label{eqn:LpEstimatePositiveFunction}
		\norm{W}_p \le \IE W + \norm{(W - \IE W)_+}_p.
	\end{align}
	The second term on the right hand side can now be estimated using Theorem \ref{theorem:BBLM} again, which in combination with Lemma \ref{lemma:recursiveEstimateOperatorNorms} gives
	\begin{align*}
		\norm{\left( \abs{\mathfrak{h} f} - \IE \abs{\mathfrak{h}f} \right)_+}_p \le \sqrt{2 \kappa p} \norm{\mathfrak{h}^+ \abs{\mathfrak{h}f}}_p \le \sqrt{2\kappa p} \norm{\mathfrak{h}^{(2)}f}_{\mathrm{op},p}.
	\end{align*}
	This can be easily iterated to obtain for any $d \in \IN$
	\begin{align*}
	\norm{f - \IE f}_p \le \sum_{j = 1}^{d-1} (8\kappa p)^{j/2} \norm{\mathfrak{h}^{(j)}f}_{\mathrm{op},1} + (8\kappa p)^{d/2} \norm{\mathfrak{h}^{(d)}f}_{\mathrm{op},\infty}.
	\end{align*}
Now it remains to apply Proposition \ref{proposition:FromLpToCoM}.
\end{proof}

To prove Theorem \ref{theorem:partialLSITails}, we shall require the following proposition, which is proven in \cite[Proposition 2.4]{GSS18}. (Note that the definition of $\mathfrak{h}$ there differed by a factor of $\sqrt{2}$.) The estimate \eqref{eqn:fMinusEfLP2} does not appear therein, but is an easy modification of the proof.

\begin{proposition}	\label{proposition:momentinequality}
	Let $\mu$ be a measure on a product of Polish spaces satisfying a $\dpartial\mathrm{-LSI}(\sigma^2)$. Then, for any $f \in L^\infty(\mu)$ and any $p \ge 2$ we have
% 	\begin{equation}	\label{eqn:momentinequality2}
% 		\lVert f \rVert_p^2 - \lVert f \rVert_2^2 \le 2\sigma^2 (p-2) \lVert \dpartial f \rVert_p^2 \le \sigma^2(p-2)\norm{\mathfrak{h}f}_p^2
% 	\end{equation}
% 	and for positive functions $f$
% 	\begin{equation}	\label{eqn:momentinequality3}
% 		\lVert f \rVert_p^2 - \lVert f \rVert_2^2 \le \sigma^2 (p-2) \lVert \mathfrak{h}^+ f \rVert_p^2.
% 	\end{equation}
%   Consequently for any function $f$
  \begin{equation} \label{eqn:fMinusEfLP}
	\norm{f - \IE f}_p \le (2\sigma^2 (p-3/2))^{1/2} \norm{\dpartial f}_p \le (\sigma^2(p-3/2)/2)^{1/2} \norm{\mathfrak{h} f}_p
  \end{equation}
  and
  \begin{equation}\label{eqn:fMinusEfLP2}
	\norm{(f - \IE f)_+}_p \le (2\sigma^2 (p-3/2))^{1/2} \norm{\mathfrak{h}^+ f}_p.
  \end{equation}
\end{proposition}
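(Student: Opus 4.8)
The plan is to obtain the moment bound \eqref{eqn:fMinusEfLP} by the standard ``Herbst argument for moments'', i.e. a differential inequality in the exponent $p$ fed by the $\dpartial\mathrm{-LSI}(\sigma^2)$, and then to deduce the remaining inequalities from two elementary comparisons of difference operators. Write $h \coloneqq f - \IE_\mu f$, $F(p) \coloneqq \IE_\mu\abs{h}^p$ and $N(p) \coloneqq \norm{h}_p = F(p)^{1/p}$. Differentiating $\log N(p) = p^{-1}\log F(p)$ gives the identity $N'(p)/N(p) = \Ent_\mu(\abs{h}^p)/(p^2 F(p))$. For the base point $p = 2$ one only needs the Poincar\'e inequality $\Var_\mu(g) \le \sigma^2\int\abs{\dpartial g}^2\,d\mu$, which follows from \eqref{eqn:defiLSImitGamma} by inserting $1 + \epsilon g$, using the difference-operator identity $\abs{\dpartial(1+\epsilon g)} = \epsilon\abs{\dpartial g}$, and letting $\epsilon \to 0$; this gives $N(2)^2 = \Var_\mu(f) \le \sigma^2\norm{\dpartial f}_2^2$, which is \eqref{eqn:fMinusEfLP} at $p = 2$.

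For general $p \ge 2$ I would apply the $\dpartial\mathrm{-LSI}$ to the bounded function $\abs{h}^{p/2}$ and bound the resulting energy term; the target is an estimate
\[
	\int \abs{\dpartial(\abs{h}^{p/2})}^2\,d\mu \;\le\; \frac{p^2}{2}\int \abs{h}^{p-2}\abs{\dpartial f}^2\,d\mu,
\]
obtained by writing each $\dpartial_i(\abs{h}^{p/2})(x)^2 = \Var_{\mu(\cdot\mid x_{i^c})}(\abs{h}^{p/2})$ as a double integral, applying the elementary bound $(\abs{u}^{p/2}-\abs{v}^{p/2})^2 \le \tfrac{p^2}{4}\max(\abs{u},\abs{v})^{p-2}(u-v)^2$ (valid for $p \ge 2$), and using $\dpartial f = \dpartial h$. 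Combined with the LSI and H\"older's inequality $\int\abs{h}^{p-2}\abs{\dpartial f}^2\,d\mu \le F(p)^{(p-2)/p}\norm{\dpartial f}_p^2$, the displayed identity turns into $\tfrac12(N(p)^2)' = N'(p)N(p) \le \sigma^2\norm{\dpartial f}_p^2$. Integrating from $2$ to $p$, using that $q \mapsto \norm{\dpartial f}_q$ is non-decreasing together with the Poincar\'e bound for $N(2)^2$, gives exactly $N(p)^2 \le \sigma^2\norm{\dpartial f}_p^2 + 2\sigma^2(p-2)\norm{\dpartial f}_p^2 = 2\sigma^2(p-3/2)\norm{\dpartial f}_p^2$, which is the substantive inequality in \eqref{eqn:fMinusEfLP}. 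The second inequality in \eqref{eqn:fMinusEfLP} is then immediate from Popoviciu's inequality $\Var_\nu(Z) \le \tfrac14(\sup Z - \inf Z)^2$ applied to the conditional laws $\nu = \mu(\cdot\mid x_{i^c})$, which yields the pointwise comparison $\abs{\dpartial f} \le \tfrac12\abs{\mathfrak h f}$.

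I expect the one genuinely delicate point to be the energy estimate with the sharp constant $p^2/2$: after integrating out one of the two variables, the mean-value bound leaves a cross term $\int \abs{h}^{p-2}(h - \IE_{\mu(\cdot\mid x_{i^c})}h)^2\,d\mu(\cdot\mid x_{i^c})$ which is not dominated by $\Var_{\mu(\cdot\mid x_{i^c})}(h)\int\abs{h}^{p-2}\,d\mu(\cdot\mid x_{i^c})$, so one needs a sharper argument here --- for instance centering $\abs{h}^{p/2}$ at $\abs{\IE_{\mu(\cdot\mid x_{i^c})}h}^{p/2}$ and invoking Jensen, a sharper two-point inequality, or absorbing the surplus into the left-hand side of the differential inequality --- in order to land on the stated constant rather than a weaker one. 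Finally, \eqref{eqn:fMinusEfLP2} is obtained by rerunning the whole argument with $((f-\IE f)_+)^{p/2}$ in place of $\abs{h}^{p/2}$: since this function is non-decreasing in $f$ and vanishes where $f \le \IE f$, only upward increments enter, the two-point estimates may be carried out with $(f - T_i f)_+$ instead of $\abs{f - T_i f}$, and the base case $p = 2$ uses the symmetrisation $\Var_\nu(Z) = \IE_{\nu\otimes\nu}[((Z-Z')_+)^2]$ applied to the conditional laws, which after integration gives $\norm{\dpartial f}_2 \le \norm{\mathfrak h^+ f}_2$; the argument then closes with $\norm{\mathfrak h^+ f}_p$ on the right-hand side.
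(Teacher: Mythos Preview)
The paper does not give a self-contained proof here; it cites \cite[Proposition 2.4]{GSS18} and says \eqref{eqn:fMinusEfLP2} is ``an easy modification''. Your outline---the Aida--Stroock moment version of the Herbst argument, feeding the LSI into a differential inequality for $p\mapsto N(p)^2$, with the Poincar\'e inequality as base case---is exactly the standard route and is surely what \cite{GSS18} does. The second inequality in \eqref{eqn:fMinusEfLP} via Popoviciu and the derivation of \eqref{eqn:fMinusEfLP2} by rerunning everything with $(f-\IE f)_+$ are both fine.

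There is, however, a real problem with the energy estimate you single out, and your proposed fixes do not cure it. With $\dpartial$ as defined in this paper, namely $\dpartial_i f(x)^2=\Var_{\mu(\cdot\mid x_{i^c})}(f)$, the inequality
\[
\int \abs{\dpartial(\abs{h}^{p/2})}^2\,d\mu \;\le\; \frac{p^2}{2}\int \abs{h}^{p-2}\abs{\dpartial f}^2\,d\mu
\]
is \emph{false}. Take $n=1$, $\mu=\epsilon\delta_a+(1-\epsilon)\delta_b$, $f=\eins_{\{a\}}$. Then $\abs{\dpartial f}^2=\epsilon(1-\epsilon)$ is constant, so the right-hand side is $\tfrac{p^2}{2}\epsilon(1-\epsilon)\IE\abs{h}^{p-2}\asymp \epsilon^{(p-1)\wedge 2}$, while the left-hand side equals $\Var(\abs{h}^{p/2})\asymp\epsilon$ as $\epsilon\to 0$; the ratio blows up for every $p>2$. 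The obstruction is structural: $\dpartial_i f$ does not depend on $x_i$, so it cannot detect where in the fibre $\abs{h}$ is large, whereas the energy of $\abs{h}^{p/2}$ does. No ``sharper two-point inequality'' or recentering can repair this.

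What does work is to run the argument with the $x$-dependent operator $\tilde\dpartial_i f(x)^2\coloneqq\tfrac12\int(f(x)-f(x_{i^c},y'))^2\,d\mu(y'\mid x_{i^c})$: after the mean-value bound and symmetrisation one lands exactly on $\tfrac{p^2}{2}\int\abs{h}^{p-2}\abs{\tilde\dpartial f}^2\,d\mu$, H\"older applies, and integration closes. This is presumably the operator actually used in \cite{GSS18}; note that $\int\abs{\tilde\dpartial f}^2\,d\mu=\int\abs{\dpartial f}^2\,d\mu$, so the $\dpartial\mathrm{-LSI}$ hypothesis and the $p=2$ base case are unaffected. For the $\mathfrak h$ and $\mathfrak h^+$ versions the difficulty disappears anyway: since $\mathfrak h_i f$ is constant in $x_i$, one may bound $(h(y)-h(y'))^2\le\mathfrak h_i f^2$ inside the double integral and pull it out, obtaining $\int\abs{\dpartial(\abs{h}^{p/2})}^2\,d\mu\le\tfrac{p^2}{4}\int\abs{h}^{p-2}\abs{\mathfrak h f}^2\,d\mu$ directly.
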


\begin{proof}[Proof of Theorem \ref{theorem:partialLSITails}]
The proof is very similar to the proof of Theorem \ref{theorem:Lpnormestimates}.
%From the $\dpartial\mathrm{-LSI}(\sigma^2)$ and Proposition \ref{proposition:momentinequality} we have
% \begin{align}
% 	\norm{f - \IE f}_p &\le (2\sigma^2 (p-3/2))^{1/2} \norm{\mathfrak{h}^+ \abs{f}}_p \label{eqn:LpEstimate01} \\
% 	\norm{f - \IE f}_p &\le(\sigma^2 (p-3/2))^{1/2} \norm{\mathfrak{h} f}_p \label{eqn:LpEstimate02}.
% \end{align}
In the first step, using \eqref{eqn:fMinusEfLP} leads to
\[
	\norm{f - \IE f}_p \le (2\sigma^2 (p-3/2))^{1/2} \norm{\mathfrak{h}f}_{\mathrm{op},1} + (2\sigma^2 (p-3/2))^{1/2} \norm{(\abs{\mathfrak{h}f} - \IE \abs{\mathfrak{h} f})_+}_p.
\]
Equation \eqref{eqn:fMinusEfLP2} can be used to estimate the second term on the right hand side. So, for any $d \in \IN$ we have by an iteration
\[
	\norm{f - \IE f}_p \le \sum_{j = 1}^{d-1} (2\sigma^2(p-3/2))^{j/2} \norm{\mathfrak{h}^{(j)} f}_{\mathrm{op},1} + (2\sigma^2(p-3/2))^{d/2} \norm{\mathfrak{h}^{(d)} f}_{\mathrm{op},\infty}
\]
Again we can apply Proposition \ref{proposition:FromLpToCoM} to obtain the concentration inequality.
\end{proof}

To prove Theorem \ref{theorem:SupremaOfFunctions} we shall need the following lemma.

\begin{lemma}\label{lemma:hPlusEstimateSupremum}
Let $(\mathcal{B}, \norm{\cdot})$ be a Banach space and $\mathcal{F}$ a family of uniformly norm-bounded, $\mathcal{B}$-valued, measurable functions and set $g(X) = \sup_{f \in \mathcal{F}} \norm{f(X)}$. We have
\[
	\abs{\mathfrak{h}^+ g(X)} \le \sup_{f \in \mathcal{F}} \abs{\mathfrak{h}^+ \norm{f}(X)}.
\]
\end{lemma}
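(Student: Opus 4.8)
The plan is to unwind the definition of $\mathfrak{h}^+$ coordinate by coordinate and bound each term by the corresponding supremum over $\mathcal{F}$. Fix $i \in [n]$. By definition, $\mathfrak{h}^+_i g(X) = \lVert (g(X) - T_i g(X))_+ \rVert_{X_i', \infty}$, so it suffices to control the nonnegative quantity $(g(X) - T_i g(X))_+$ pointwise (in $X$ and $X_i'$) by $\sup_{f \in \mathcal{F}} \mathfrak{h}^+_i \lVert f \rVert(X)$, and then take $L^\infty$-norms in $X_i'$ and sum the squares over $i$. The key observation is the following: whenever $g(X) \geq T_i g(X)$, we may pick (or approximately pick, using the finite-supremum convention on $\mathcal{F}$) a function $f_0 \in \mathcal{F}$ nearly attaining the supremum defining $g(X) = \sup_{f} \lVert f(X) \rVert$. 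Crucially, this maximizer $f_0$ does \emph{not} depend on $X_i'$, since $g(X)$ does not. Then
\[
  g(X) - T_i g(X) \leq \lVert f_0(X) \rVert - \sup_{f \in \mathcal{F}} \lVert T_i f(X) \rVert + \epsilon \leq \lVert f_0(X) \rVert - \lVert T_i f_0(X) \rVert + \epsilon,
\]
because $\sup_f \lVert T_i f(X) \rVert \geq \lVert T_i f_0(X) \rVert$. Taking positive parts and then letting $\epsilon \downarrow 0$ gives $(g(X) - T_i g(X))_+ \leq (\lVert f_0(X) \rVert - \lVert T_i f_0(X) \rVert)_+ \leq \sup_{f \in \mathcal{F}} (\lVert f(X) \rVert - \lVert T_i f(X) \rVert)_+$.

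Next I would take the $L^\infty$-norm with respect to $X_i'$ on both sides. Since the right-hand side bound $\sup_{f \in \mathcal{F}} (\lVert f(X) \rVert - \lVert T_i f(X) \rVert)_+$ holds for every value of $X_i'$, its essential supremum over $X_i'$ is bounded by $\sup_{f \in \mathcal{F}} \lVert (\lVert f(X) \rVert - \lVert T_i f(X) \rVert)_+ \rVert_{X_i', \infty} = \sup_{f \in \mathcal{F}} \mathfrak{h}^+_i \lVert f \rVert(X)$; here one uses that the $L^\infty$-norm of a supremum is the supremum of the $L^\infty$-norms (valid under the countability/finite-supremum convention in force). Hence $\mathfrak{h}^+_i g(X) \leq \sup_{f \in \mathcal{F}} \mathfrak{h}^+_i \lVert f \rVert(X)$ for each $i$.

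Finally I would assemble the vector norm: squaring and summing over $i \in [n]$,
\[
  \abs{\mathfrak{h}^+ g(X)}^2 = \sum_{i=1}^n \big(\mathfrak{h}^+_i g(X)\big)^2 \leq \sum_{i=1}^n \Big(\sup_{f \in \mathcal{F}} \mathfrak{h}^+_i \lVert f \rVert(X)\Big)^2 \leq \sup_{f \in \mathcal{F}} \sum_{i=1}^n \big(\mathfrak{h}^+_i \lVert f \rVert(X)\big)^2 = \sup_{f \in \mathcal{F}} \abs{\mathfrak{h}^+ \lVert f \rVert(X)}^2,
\]
where the last inequality is the elementary fact that a sum of suprema need not exceed — wait, rather: $\sum_i \sup_f a_i(f)^2 \geq \sup_f \sum_i a_i(f)^2$, so the inequality in fact goes the \emph{other} way and must be handled more carefully. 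The correct route is to note that for the single maximizer $f_0$ chosen \emph{uniformly in $i$} — which is exactly what the argument above affords, since $f_0$ depends only on $X$ — one gets $\mathfrak{h}^+_i g(X) \leq \mathfrak{h}^+_i \lVert f_0 \rVert(X)$ simultaneously for all $i$ (up to the common $\epsilon$), whence $\abs{\mathfrak{h}^+ g(X)} \leq \abs{\mathfrak{h}^+ \lVert f_0 \rVert(X)} \leq \sup_{f \in \mathcal{F}} \abs{\mathfrak{h}^+ \lVert f \rVert(X)}$. Taking the square root completes the proof. The main obstacle — and the whole point of the lemma — is precisely this: that the positive part $(\cdot)_+$ lets us commute the supremum over $\mathcal{F}$ past the difference operator using a \emph{coordinate-independent} maximizer; without the positive part (as the paper notes in the discussion after Theorem \ref{theorem:SupremaOfFunctions}) one cannot choose a single $f_0$ working for all $i$ at once, and the estimate fails.
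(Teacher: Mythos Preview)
Your argument is correct and follows essentially the same route as the paper: pick an $\epsilon$-maximizer $f_\epsilon$ of $\sup_f \norm{f(X)}$ that depends only on $X$ (hence not on $i$ or $X_i'$), obtain $\mathfrak{h}^+_i g(X) \le \mathfrak{h}^+_i \norm{f_\epsilon}(X) + \epsilon$ simultaneously for all $i$, pass to the Euclidean norm, bound by the supremum over $\mathcal{F}$, and let $\epsilon \to 0$. Your mid-proof detour (bounding coordinate-wise by $\sup_f \mathfrak{h}^+_i\norm{f}(X)$ and then trying to sum) indeed goes the wrong way, but your self-correction recovers exactly the paper's proof; just be careful not to send $\epsilon \to 0$ before you have replaced the $f_\epsilon$-dependent bound by the $\epsilon$-free supremum over $\mathcal{F}$.
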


\begin{proof}
 Fix an $X \in \mathcal{Y}$ and choose for any $\epsilon > 0$ a function $f_\epsilon$ such that $\norm{f_\epsilon(X)} \ge \sup_{f \in \mathcal{F}} \norm{f(X)} - \epsilon$. This yields
{\footnotesize
\begin{align*}
	\mathfrak{h}^+_i g(X) &= \sup_{x_i'} \Big( \sup_{f \in \mathcal{F}} \norm{f(X)} - \sup_{f \in \mathcal{F}} \norm{f(X_{i^c}, x_i')}\Big)_+ %\le \sup_{x_i'} \Big( \norm{f_\epsilon(X)} + \epsilon - \sup_{f \in \mathcal{F}} \norm{f(\overline{X}_i, x_i')} \Big)_+ 
	\le \sup_{x_i'} \Big( \norm{f_\epsilon(X)} + \epsilon - \norm{f_\epsilon(X_{i^c}, x_i')} \Big)_+ 
	\\ &\le \sup_{x_i'} (\norm{f_\epsilon(X)} - \norm{f_\epsilon(X_{i^c}, x_i') })_+ + \epsilon = \mathfrak{h}_i^+ \norm{f_\epsilon}(X) + \epsilon,
\end{align*}
}where the first inequality follows by monotonicity of $x \mapsto x_+$ and the second one is a consequence of $(a+b-c)_+ \le (a-c)_+ + b$ for $a,b,c \ge 0$. Thus we have
{\footnotesize
\begin{align*}
	\abs{\mathfrak{h}^+ g(X)} &= \Big(\sum_{i = 1}^n \mathfrak{h}_i^+ g(X)^2 \Big)^{1/2} \le \Big( \sum_{i = 1}^n (\mathfrak{h}_i^+ \norm{f_\epsilon}(X) + \epsilon)^2 \Big)^{1/2} = \abs{\mathfrak{h}^+ \norm{f_\epsilon}(X) + \epsilon(1,\ldots,1)} \\
	&\le \abs{\mathfrak{h}^+ \norm{f_\epsilon}(X)} + \sqrt{n} \epsilon \le \sup_{f \in \mathcal{F}} \abs{\mathfrak{h}^+ \norm{f}(X)} + \sqrt{n} \epsilon.
\end{align*}
}Taking the limit $\epsilon \to 0$ yields the claim.
\end{proof}

\begin{proof}[Proof of Theorem \ref{theorem:SupremaOfFunctions}]
Note that in the real-valued case, the estimate $\mathfrak{h}_i^+ \abs{f} \le \mathfrak{h}_i f$ holds.  For brevity, let $s = 3/2$. Using this in combination with Proposition \ref{proposition:momentinequality} and Lemma \ref{lemma:hPlusEstimateSupremum} yields
\begin{align*}
\norm{(g - \IE g)_+}_p &\le (2\sigma^2 (p-s))^{1/2} \norm{\mathfrak{h}^+ g}_p %\le (2\sigma^2 p)^{1/2} \norm{\sup_{f \in \mathcal{F}} \abs{\mathfrak{h}^+ \abs{f}(X)} }_p \\
\le (2\sigma^2 (p-s))^{1/2} \norm{\sup_{f \in \mathcal{F}} \abs{\mathfrak{h} f}}_p \\
&\le (2\sigma^2 (p-s))^{1/2} \IE W_1 + (2\sigma^2 (p-s))^{1/2} \norm{(W_1 - \IE W_1)_+}_p.
\end{align*}
We can apply Proposition \ref{proposition:momentinequality} again on the right hand side, which gives
\[
	\norm{(g - \IE g)_+}_p \le (2\sigma^2 (p-s))^{1/2} \IE W_1 + (2\sigma^2 (p-s)) \norm{\mathfrak{h}^+ W_1}_p.
\]
A combination of Lemmas \ref{lemma:recursiveEstimateOperatorNorms} and \ref{lemma:hPlusEstimateSupremum} shows that $\abs{\mathfrak{h}^+ W_j} \le W_{j+1}$, and so by an iteration we obtain
\[
    \norm{(g - \IE g)_+}_p \le \sum_{j = 1}^{d-1} (2\sigma^2 (p-s))^{j/2} \IE W_j + (2\sigma^2(p-s))^{d/2} \norm{W_d}_\infty.the
\]
In the case of independent random variables we replace the first step using Theorem \ref{theorem:BBLM}. Here, $2\sigma^2 = 2\kappa$ and $s = 0$.
\end{proof} 

\begin{proof}[Proof of Proposition \ref{prop:SupremaOfSums}]
The proof shares some similarities with the proof of Lemma \ref{lemma:hPlusEstimateSupremum}. Since $X$ satisfies a $\dpartial\mathrm{-LSI}(\sigma^2)$, we have for any $p \ge 2$
	\[
		\norm{(g - \IE g)_+}_p \le (2\sigma^2 (p-3/2))^{1/2} \norm{\mathfrak{h}^+ g}_p.
	\]
	Moreover, for any $i \in [n]$ and $x \in \mathcal{Y}$, if a maximizer $\tilde{f}$ of $\sup_{f \in \mathcal{F}} \abs{\sum_{j = 1}^n f(x_j)}$ exists, we obtain
	\begin{align*}
		\mathfrak{h}^+_i g(x)^2 &= \sup_{x_i'} \Big(\sup_{f \in \mathcal{F}} \abs{f(X)} - \sup_{f \in \mathcal{F}} \abs{f(X_{i^c}, x_i')}\Big)^2_+ \\
		&\le \sup_{x_i'} \Big( \abs{\tilde{f}(X)} - \abs{\tilde{f}(X_{i^c}, x_i')} \Big)_+^2 \le c(\tilde{f})^2 \le \sup_{f \in \mathcal{F}} c(f)^2.
		%= \iint (g(\overline{x}_i, y) - g(\overline{x}_i, y'))_+^2 d\mu(y \mid \overline{x}_i) d\mu(y' \mid \overline{x}_i) \\
		%&\le \iint \Big(\abs{\sum_j \tilde{f}((\overline{x}_i, y)_j)} - \abs{\sum_j \tilde{f}((\overline{x}_i,y')_j)} \Big)_+^2 d\mu(y \mid \overline{x}_i) d\mu(y' \mid \overline{x}_i) \\
%		%&= \frac{1}{2} \iint \Big(\abs{\sum_j \tilde{f}((\overline{x}_i, y)_j)} - \abs{\sum_j \tilde{f}((\overline{x}_i,y')_j)} \Big)^2 d\mu(y \mid \overline{x}_i) d\mu(y' \mid \overline{x}_i)\\
		%&\le \frac{1}{2} \iint \Big( \tilde{f}(y) - \tilde{f}(y') \Big)^2 d\mu(y \mid \overline{x}_i) d\mu(y' \mid \overline{x}_i) \\
		%&= \mathrm{Var}_{\mu(\cdot \mid \overline{x}_i)}(\tilde{f}(X_i)) \le c_i^2.
	\end{align*}
    If a maximizer $\tilde{f}$ does not exist, these estimates remain valid by an approximation argument as in the proof of Lemma \ref{lemma:hPlusEstimateSupremum}.
	Consequently, we have $\norm{(g - \IE g)_+}_p \le (2\sigma^2 (p-3/2) n \sup_{f \in \mathcal{F}} c(f)^2 )^{1/2}.$ The claim now follows from Proposition \ref{proposition:FromLpToCoM}.
\end{proof}

\section{Suprema of chaos, U-statistics and polynomials: Proofs}		\label{section:SupremaOfChaos}
\begin{proof}[Proof of Theorem \ref{theorem:ChaosInIndependentOrPartialLSI}]
Let us first consider the case that $X$ satisfies a $\dpartial\mathrm{-LSI}(\sigma^2)$. Recall that we have by \eqref{eqn:fMinusEfLP2}
\[
	\norm{(f - \IE f)_+}_p \le (2\sigma^2 (p-3/2))^{1/2} \norm{\mathfrak{h}^+ f}_p.
\]
We shall make use of the pointwise inequality $\abs{\mathfrak{h}^+ f} \le (b-a) W_1.$ To see this, let $(\tilde{t}, \tilde{v}^*)$ be the tuple satisfying $\sup_{t \in \mathcal{T}} \sup_{v^* \in \mathcal{B}_1^*} v^*(\sum_{I \in \mathcal{I}_{n,d}} X_I t_I) = \tilde{v}^*(\sum_{I \in \mathcal{I}_{n,d}} X_I \tilde{t}_I)$. We have
\begin{align*}
	\abs{\mathfrak{h}^+ f(X)}^2 &= \sum_{i = 1}^n \sup_{x_i'} \Big( {\sup_{t, v^*} v^*\Big(\sum_{I \in \mathcal{I}_{n,d}} X_I t_I \Big)} - {\sup_{t, v^*} v^*\Big(\sum_{I \in \mathcal{I}_{n,d}} (X_{i^c}, x_i')_I t_I \Big)} \Big)_+^2 \\
	&\le \sum_{i = 1}^n \sup_{x_i'} \Big( (X_i - x_i') \sum_{\substack{I \in \mathcal{I}_{n,d-1} \\ i \notin I}} \tilde{v}^*(X_I \tilde{t}_{I \cup \{i\}}) \Big)^2 \\
	&\le (b-a)^2 \sum_{i = 1}^n \Big( \tilde{v}^*\Big( \sum_{\substack{I \in \mathcal{I}_{n,d-1} \\ i \notin I}} X_I \tilde{t}_{I \cup \{i\}} \Big) \Big)^2 \\
	&= (b-a)^2 \sup_{\alpha^{(1)} : \norm{\alpha^{(1)}} \le 1} \Big(\tilde{v}^* \Big( \sum_{i = 1}^n \alpha^{(1)}_i \sum_{I \in \mathcal{I}_{n,d-1} : i \notin I} X_I \tilde{t}_{I \cup \{i\}} \Big)\Big)^2 \\
	&\le (b-a)^2 \Big(\sup_{t, v^*} \sup_{\alpha^{(1)} : \norm{\alpha^{(1)}} \le 1} v^*\Big( \sum_{i = 1}^n \alpha_i^{(1)} \sum_{I \in \mathcal{I}_{n,d} : i \notin I} X_I t_{I \cup \{i\}} \Big)\Big)^2 \\
	&= (b-a)^2 W_1^2,
\end{align*}
proving the first part. Consequently,
\begin{align*}
	\norm{(f - \IE f)_+}_p \le (2\sigma^2 (b-a)^2 (p-3/2))^{1/2}\left( \IE W_1 + \norm{(W_1 - \IE W_1)_+}_p \right). %(2 \sigma^2 (b-a)^2 p)^{1/2} \norm{W_1}_p \le
\end{align*}
As in \cite{BBLM05}, this can now be iterated, i.\,e. we have for any $k \in \{1,\ldots, d-1\}$ $\abs{\mathfrak{h}^+ W_k} \le (b-a)W_{k+1}$. Here we may argue as above, where the only difference is to choose $(\tilde{t}, \tilde{v}^*)$ and $\tilde{\alpha}^{(1)},\ldots, \tilde{\alpha}^{(k)}$ which maximize $W_k$. This finally leads to
\begin{align*}
	\norm{f - \IE f}_p \le \sum_{j = 1}^d (2\sigma^2 (b-a)^2 (p-3/2))^{j/2} \IE W_j,
\end{align*}
using that $W_d$ is constant. This proves \eqref{eqn:LpEstimateNumber1}. The same arguments are also valid without a $\dpartial\mathrm{-LSI}(\sigma^2)$ property, if one considers $\norm{(f - \IE f)_+}_p$ and applies Theorem \ref{theorem:BBLM} instead. \par
Lastly, to prove \eqref{eqn:LpEstimateNoPositivePart}, let us first consider why we cannot argue as before. Note that the argument heavily relies on the positive part of the difference operator $\mathfrak{h}^+$, which allows us to choose the maximizers $t_1, \ldots, t_n$ independent of $i \in [n]$. This is no longer possible for the concentration inequality. Here, Theorem \ref{theorem:BBLM} yields
\begin{align*}			%\label{eqn:thm17firsteq}
	\norm{f - \IE f}_p &\le (\sigma^2 p)^{1/2} \norm{\mathfrak{h}f}_{p} \\
	\norm{(f - \IE f)_+}_p &\le (\sigma^2 p)^{1/2} \norm{\mathfrak{h}^+ f}_{p}. 		%\label{eqn:thm17secondeq}
\end{align*}
Thus this argument fails if we try to use these inequalities. However, we can rewrite
$\mathfrak{h}_i f(x) = \sup_{x_i',x_i''} (f(x_{i^c}, x_i') - f(x_{i^c}, x_i''))_+= \sup_{x_i'} \mathfrak{h}_i^+ f(x_{i^c}, x_i')$, where the $\sup$ is to be understood with respect to the support of $X_i'$. As a consequence, we have for each fixed $i \in [n]$ (again choosing $\tilde{t}$ by maximizing the first summand in the brackets)
\begin{align*}
	\mathfrak{h}_i f(x)^2 &= \sup_{x_i'} \sup_{x_i''} \Big( \sup_{t \in \mathcal{T}} \bnorm{\sum_{I \in \mathcal{I}_{n,d}} (X_{i^c}, x_i')_I t_I} - \sup_{t \in \mathcal{T}} \bnorm{\sum_{I \in \mathcal{I}_{n,d}} (X_{i^c}, x_i'')_I t_I} \Big)_+^2 \\
	&\le \sup_{x_i'} \sup_{x_i''} \bnorm{(x_i' - x_i'')\sum_{I \in \mathcal{I}_{n,d-1} : i \notin I} X_I \tilde{t}_{I \cup \{i\}} }^2 \\
	&\le \sup_{x_i', x_i''} \abs{x_i' - x_i''}^2 \sup_{t \in \mathcal{T}} \bnorm{\sum_{I \in \mathcal{I}_{n,d-1} : i \notin I} X_I t_{I \cup \{i\}} }^2 \\
	&\le (b-a)^2 \sup_{t \in \mathcal{T}} \bnorm{\sum_{I \in \mathcal{I}_{n,d-1}: i \notin I} X_I t_{I \cup \{i\}} }^2.
\end{align*}	
This implies
\begin{equation*}
	\abs{\mathfrak{h}f}^2(x) \le (b-a)^2 \sup_{\substack{\alpha^1 \in \IR^n \\ \abs{\alpha^1} \le 1}} \sum_{i = 1}^n \alpha_i^1 \sup_{t \in \mathcal{T}} \bnorm{\sum_{\substack{I \in \mathcal{I}_{n,d-1} \\ i \notin I}} X_I t_{I \cup \{i \}} } = (b-a)^2 \tilde{W}_1^2.
\end{equation*}
The proof is now completed as using the same arguments as in the first part, with $W_k$ replaced by $\tilde{W}_k$. The same argument is valid for $X$ satisfying a $\dpartial\mathrm{-LSI}(\sigma^2)$.
\end{proof}

% \begin{proof}[Proof of Corollary \ref{corollary:RademacherRV}]
% 	Since the uniform distribution on $\{-1,+1\}^n$ satisfies a $\dpartial$-$\LSI(1)$, an inequality with $4$ replaced by $8$ in the $L^p$ norm estimate follows immediately from Theorem \ref{theorem:ChaosInIndependentOrPartialLSI}. To obtain the constant $4$, we need to use the (sharper) inequality $\abs{\dpartial f(X)}^2 \le \frac{1}{2} (b-a)^2 W_1^2$, valid for Rademacher random variables.
% \end{proof}

\begin{proof}[Proof of Proposition \ref{proposition:RademacherAsHypercube}]
The proposition can be proven using a similar technique as before, since the Hilbert--Schmidt norms of higher order difference act as Fourier projections. We choose to take an alternate route as follows. The proof of \cite[Theorem 9.21]{OD14} shows that for any $f$ with degree at most $d$ and any $p \ge 2$
\begin{equation} \label{eqn:LpInequalityBoole}
\norm{f(X) - \IE f(X)}_p \le \sum_{j = 1}^d (p-1)^{j/2} W_j(f)^{1/2}.
\end{equation}
First off, by Chebyshev's inequality we have for any $p \ge 1$
\[
\IP(\abs{f(X) - \IE f(X)} \ge e\norm{f(X) - \IE f(X)}_p) \le \exp(-p).
\]
We want to apply this to a $t$-dependent parameter $p$ given by the function
\[
\eta_f(t) \coloneqq 1 + \min_{j = 1,\ldots,d} \Big( \frac{t}{de W_j(f)^{1/2}} \Big)^{2/j}.
\]
If $\eta_f(t) \ge 2$, \eqref{eqn:LpInequalityBoole} yields $e\norm{f(X) - \IE f(X)}_{\eta_f(t)} \le t$, which combined with the trivial estimate $\IP(\cdot) \le 1$ gives
\[
\IP(\abs{f(X) - \IE f(X)} \ge t) \le e^2 \exp(-\eta_f(t)) = \exp\Big(1 - \min_{j = 1,\ldots,d} \Big( \frac{t}{de W_j(f)^{1/2}} \Big)^{2/j}\Big)
\]
as claimed.
\end{proof}

\begin{proof}[Proof of Proposition \ref{proposition:UStatistics}]
	We apply Theorems \ref{theorem:Lpnormestimates} and \ref{theorem:partialLSITails} in the respective cases. To this end, we make use of the general bound $\norm{\mathfrak{h}^{(k)} f}_{\mathrm{op},1} \le \norm{\mathfrak{h}^{(k)}f}_{\mathrm{HS},\infty}$ for $k \in [d]$. For any distinct $j_1, \ldots, j_k$ write $\norm{\cdot} = \norm{\cdot}_{j_1, \ldots, j_k, \infty}$, so that
	\begin{align*}
		&\mathfrak{h}_{j_1 \ldots, j_k} f = \Big\lVert\, f + \sum_{l=1}^k\, (-1)^l \sum_{1 \leq s_1 < \ldots < s_l \leq k}	T_{j_{s_1} \ldots j_{s_l}}f\, \Big\rVert \\
		&= \Big\lVert \sum_{i_1 \neq \ldots \neq i_d} \big( h(X_{i_1}, \ldots, X_{i_d}) + \sum_{l = 1}^k (-1)^l \sum_{s_1 < \ldots < s_l} T_{j_{s_1} \ldots j_{s_l}}h(X_{i_1}, \ldots ,X_{i_d}) \big) \Big\rVert
		\\ &\eqqcolon \Big\lVert \sum_{i_1 \neq \ldots \neq i_d} S_{i_1, \ldots, i_d}(h,X) \Big\rVert. 
		%\\ 		&= \binom{d}{k} \Big\lVert \sum_{i_{k+1} \neq \ldots \neq i_{d}} h() \Big\rVert
	\end{align*}
Now it is easy to see that  $S_{i_1, \ldots, i_d}(h,X) = 0$ unless $\{j_1, \ldots, j_k \} \subset \{ i_1, \ldots, i_d \}$ (for example, this follows if one writes the sum inside the norm as $\prod_{i = 1}^k (\mathrm{Id} - T_{j_i}) f$), and in these cases one can upper bound the supremum by $2^k B$, from which we infer
	\begin{align*}
	\mathfrak{h}_{j_1 \ldots, j_k} f &\le \binom{d}{k} 2^k B (n-k) \cdots (n-d+1) \le \binom{d}{k} 2^k B n^{d-k}.
	\end{align*}
Consequently, this leads to
\[
	\norm{\mathfrak{h}^{(k)} f}_{\mathrm{HS},\infty} \le \binom{d}{k} 2^k B n^{d-k} n^{k/2} = \binom{d}{k} 2^k B n^{d-k/2}. %\Big( \sum_{j_1 \neq \ldots \neq j_k} \abs{\mathfrak{h}_{j_1 \ldots j_k} f(X)}_\infty^2 \Big)^{1/2} \le
\]
%The case $k = d$ follows by similar reasoning, i.\,e. $\norm{\mathfrak{h}^{(d)} f(X)}_{\mathrm{op},\infty} \le 2^d B n^{d/2}$.

Thus, an application of Theorem \ref{theorem:Lpnormestimates} or \ref{theorem:partialLSITails} respectively yields for any $t \ge 0$ and for $C$ as given therein
\begin{align*}
	\IP \Big( \abs{f - \IE f} \ge t \Big) \le 2 \exp \Big( - \frac{1}{C} \min_{k=1,\ldots, d} \Big( \frac{t}{B \binom{d}{k}2^k n^{d-k/2}} \Big)^{2/k} \Big).
\end{align*}
For the second part, choose $t = B n^{d-1/2} \tilde{t}$ for $\tilde{t} > 0$ to obtain
\[
\IP \Big( n^{1/2 - d} B^{-1} \abs{f(X) - \IE f(X)} \ge t \Big) \le 2 \exp \Big( - \frac{1}{4C_d} \min_{k=1,\ldots, d} n^{\frac{k-1}{k}} t^{2/k} \Big).
\]
A short calculation shows that the minimum is attained for $k = 1$ in the range $t \le n^{1/2}$ and for $k = d$ otherwise, i.\,e.
\begin{align}\label{eqn:estimateForUStat}
\IP (n^{1/2 - d} B^{-1} \abs{f(X) - \IE f(X)} \ge t) \le 2 \exp \Big( - \frac{1}{4C_d} \min(t^2, n^{1-1/d} t^{2/d}) \Big).
\end{align}
\end{proof}

\begin{proof}[Proof of Theorem \ref{theorem:concentrationHLSI}]
We give a sketch of the proof only and refer to \cite[Proof of Theorem 2.2]{AKPS18} for details. Recall that by \eqref{eqn:fMinusEfLP} we have the inequality
\[
	\norm{f_d(X) - \IE f_d(X)}_p \le (4\sigma^2 p)^{1/2} \norm{\nabla f_d(X)}_p.
\]
Using the arguments and notations from \cite[Proof of Theorem 2.2]{AKPS18} leads to
\[
	\norm{f_d(X) - \IE f_d(X)}_p \le \sum_{k=1}^d (4\sigma^2 M^2)^{k/2} \norm{\skal{\IE_X \nabla^{(k)} f_d(X), G_1 \otimes \ldots \otimes G_k}}_p,
\]
where $M$ is an absolute constant and $G_i$ is a sequence of independent standard Gaussian random variables, independent of $X$. Furthermore, a result by Lata{\l}a \cite{La06} yields
\begin{align*}
	\norm{f_d(X) - \IE f_d(X)}_p &\le \sum_{k = 1}^d \sum_{\mathcal{I} \in P_k} (4\sigma^2 M^2 p)^{k/2} \norm{\IE \nabla^{(k)} f(X)}_{\mathcal{I}} \\
	&\le \sum_{k = 1}^d \sum_{\mathcal{I} \in P_k} (C \sigma^2 p)^{k/2} \norm{\IE \nabla^{(k)} f(X)}_{\mathcal{I}}.
\end{align*}
The rest now follows as in the previous proofs.
\end{proof}

\begin{proof}[Proof of Corollary \ref{corollary:ERGMTriangle}]
In \cite{SS18} the authors have proven that $\frac{1}{2} \Phi_{\abs{\bb}}'(1) < 1$ implies a $\dpartial\mathrm{-LSI}(\sigma^2)$ for $\mu_{\bb}$ with a constant depending on the parameter $\bb$ only. Thus, it remains to bound the norms in \eqref{eqn:partialLSIpolynomial}. Note that due to the structure of the exponential random graph model, the expectations of $\IE X_G$ and $\IE X_H$ are equal whenever $G$ and $H$ are isomorphic. Thus, we define $C_{S_2} \coloneqq \IE X_{S_2}$ (where $S_2$ is a $2$-star) and $C_{E} = \IE X_e$. \par
The Euclidean norms can be easily bounded:
\begin{align*}
\abs{\IE \nabla f(X)} &= \Big( \sum_{e \in \mathcal{I}_n} ((n-2) C_{S_2})^2 \Big)^{1/2} \le C_{S_2} n^2 \\
\abs{\IE \nabla^{(2)} f(X)} &= \Big( \sum_{e, f: e \cap f \neq \emptyset} \left( C_{E} \right)^2 \Big)^{1/2} \le C_E n^{3/2} \\
\abs{\IE \nabla^{(3)} f(X)} &= \Big( \sum_{\{e,f,g\} \in \mathcal{T}_3} 1 \Big)^{1/2} = n^{3/2},
\end{align*}
and it remains to estimate the three remaining norms. However, in \cite[Section 5.1]{AW15}, the authors given estimates for such norms in the Erd{\"o}s--R{\'e}nyi case, and it is easy to adapt these to any model with the property that $\IE X_G$ depends only on the isomorphism class of $G$ (in the complete graph). Especially, due to the structure of the exponential random graph models, this is true in this setting as well. This gives
\begin{align*}
\abs{\IE \nabla^{(3)} f(X)}_{\mathrm{op}} &\le 2^{3/2}, \quad 
\abs{\IE \nabla^{(3)} f(X)}_{\{1,2\}\{3\}} \le \sqrt{2n}, \quad
\abs{\IE \nabla^{(2)} f(X)}_{\mathrm{op}} \le 2C_E n.
\end{align*}
Inserting these estimates into \eqref{eqn:partialLSIpolynomial} finishes the proof.
\end{proof}

\section{Logarithmic Sobolev inequalities and difference operators}	\label{section:LSIs&DOs}

To conclude this paper, we discuss the LSI property \eqref{eqn:defiLSImitGamma} for different choices of difference operators $\Gamma$. Here, we always assume that the probability measure $\mu$ is defined on a product of Polish spaces $\mathcal{Y} = \otimes_{i=1}^n \mathcal{X}_i$ with product Borel $\sigma$-algebra $\mathcal{A} = \mathcal{B}(\otimes_{i = 1}^n \mathcal{X}_i)$.

In this situation, we can make use of the disintegration theorem on Polish spaces (see \cite[Chapter III]{DM78} and \cite[Theorem 5.3.1]{AGS08}): If $\mu$ is a measure on $\mathcal{Y}$, then for each $i \in \{1,\ldots,n\}$ we can decompose $\mu$ using the marginal measure $\mu_{i^c}$ (as a measure on $\otimes_{j \neq i} \mathcal{X}_i$) and a conditional measure on $\mathcal{X}_i$, which we denote by $\mu(\cdot \mid x_{i^c})$. More precisely, for any $A \in \mathcal{A}$ we have $\mu(A) = \int_{\otimes_{j \neq i} \mathcal{X}_i} \int_{\mathcal{X}_i} \eins_{A}(x_{i^c}, x_i) d\mu(x_i \mid x_{i^c}) d\mu_{i^c}(x_{i^c})$.

For finite spaces, $\mu(\cdot \mid x_{i^c})$ is just the ordinary conditional measure as used in the definition of the difference operator $\dpartial$. Note that the definition of $\dpartial$ can in principle be rewritten for products of arbitrary Polish spaces. However, our first result shows that the $\dpartial$-LSI property in fact requires the underlying space to be finite. More precisely, we say that $\mu$ has finite support if there is no sequence of sets $A_n \in \mathcal{A}$ with $\mu(A_n) > 0$ for any $n$ and $\mu(A_n) \to 0$.

\begin{proposition}\label{d-LSIs}
Let $\mathcal{Y} = \otimes_{i=1}^n \mathcal{X}_i$ be a product of Polish spaces, and let $\mu$ be a probability measure on $\mathcal{Y}$. If $\mu$ satisfies a $\dpartial$-LSI, then $\mu$ has finite support. Moreover, if $\mu$ is a product probability measure, then $\mu$ satisfies a $\dpartial$-LSI iff $\mu$ has finite support.
\end{proposition}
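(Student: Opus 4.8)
\emph{Overview.} I would prove the two implications separately. The necessity of finite support under a $\dpartial\mathrm{-LSI}$ is the substantive point; the converse for product measures is then obtained by tensorization.

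\emph{Necessity.} Assume $\mu$ satisfies a $\dpartial\mathrm{-LSI}(\sigma^2)$ but does not have finite support, so that there are sets $A_k \in \mathcal{A}$ with $\mu(A_k) > 0$ and $\mu(A_k) \to 0$. The idea is to feed the indicator $f = \eins_{A_k}$ into the inequality. Since $f^2 = \eins_{A_k}$, one computes $\Ent_\mu(f^2) = -\mu(A_k)\log\mu(A_k)$, while the energy term satisfies $(\dpartial_i f)^2(x) = \Var_{\mu(\cdot\mid x_{i^c})}(\eins_{A_k}(x_{i^c},\cdot)) = q_i(x_{i^c})\bigl(1 - q_i(x_{i^c})\bigr) \le q_i(x_{i^c})$, where $q_i(x_{i^c}) \coloneqq \mu\bigl(\{y : (x_{i^c},y) \in A_k\}\mid x_{i^c}\bigr)$ depends on $x_{i^c}$ only; by the disintegration theorem $\int q_i\,d\mu_{i^c} = \mu(A_k)$, hence $\int\abs{\dpartial f}^2\,d\mu \le n\mu(A_k)$. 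Inserting these two facts into the $\dpartial\mathrm{-LSI}(\sigma^2)$ and dividing by $\mu(A_k) > 0$ gives $\log(1/\mu(A_k)) \le 2\sigma^2 n$, that is $\mu(A_k) \ge e^{-2\sigma^2 n}$, contradicting $\mu(A_k) \to 0$. (The same computation in fact shows that under a $\dpartial\mathrm{-LSI}(\sigma^2)$ every set of positive measure has measure at least $e^{-2\sigma^2 n}$.)

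\emph{Sufficiency for product measures.} Let $\mu = \otimes_{i=1}^n \mu_i$ have finite support. Testing as above with the cylinder sets $\{x : x_i \in B\}$ for $B \subseteq \mathcal{X}_i$ with $\mu_i(B) > 0$ shows that each $\mu_i$ is supported on finitely many atoms, all of mass at least some common $\delta > 0$. The key ingredient is then the classical modified logarithmic Sobolev inequality for a probability measure $\nu$ on a finite set: there is $c(\nu) < \infty$, depending only on a lower bound for the smallest atom of $\nu$, with $\Ent_\nu(g^2) \le 2c(\nu)\Var_\nu(g)$ for all $g$; this is exactly the LSI for the i.i.d.-resampling dynamics, whose Dirichlet form is the variance (see \cite{DSC96}). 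Applying it with $\nu = \mu_i$ and $g = f(x_{i^c},\cdot)$, using that for a product measure $\mu(\cdot\mid x_{i^c}) = \mu_i$ so that $(\dpartial_i f)^2(x) = \Var_{\mu_i}(f(x_{i^c},\cdot))$, and combining with the subadditivity (tensorization) of entropy $\Ent_\mu(f^2) \le \sum_{i=1}^n \int \Ent_{\mu_i}\bigl(f^2(x_{i^c},\cdot)\bigr)\,d\mu_{i^c}$, one obtains
\[
  \Ent_\mu(f^2) \le 2\max_i c(\mu_i) \sum_{i=1}^n \int \Var_{\mu_i}(f(x_{i^c},\cdot))\,d\mu_{i^c} = 2\max_i c(\mu_i) \int \abs{\dpartial f}^2\,d\mu,
\]
that is, a $\dpartial\mathrm{-LSI}$. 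The reverse implication for product measures is a special case of the necessity part.

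\emph{Main obstacle.} The necessity direction is elementary once one thinks of testing with indicators. The content of the sufficiency direction sits in the finite-space modified log-Sobolev inequality and in the dependence of its constant on the smallest atom; this, like the tensorization of entropy, is classical, so no essentially new difficulty arises. The only point needing care is the measurability of the conditional probabilities $q_i$, which is furnished by the disintegration theorem on Polish spaces already invoked in the paper.
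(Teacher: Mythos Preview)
Your proof is correct and follows essentially the same route as the paper: test the $\dpartial$-LSI against indicators of sets of small measure for necessity, and for sufficiency invoke the finite-space modified log-Sobolev inequality together with tensorization of entropy. Your energy bound $\int\abs{\dpartial \eins_{A_k}}^2\,d\mu \le n\,\mu(A_k)$ is in fact slightly more careful than the paper's claimed equality $=\mu(A_k)(1-\mu(A_k))$, which as written holds only when $n=1$, though either estimate yields the desired contradiction.
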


\begin{proof}
First assume $\mu$ does not have finite support, i.\,e. there is a sequence $A_n \in \mathcal{A}$ with $\mu(A_n) \to 0$. Choosing $f_n \coloneqq \eins_{A_n} \in L^\infty(\mu)$ and assuming a $\dpartial$-LSI$(\sigma^2)$ holds, we obtain
\begin{equation}\label{d-Widersp}
	\mu(A_n) \log(1/\mu(A_n)) = \Ent_{\mu}(f_n^2) \le 2 \sigma^2 \int (\dpartial f_n)^2 d\mu = 2 \sigma^2 \mu(A_n)(1-\mu(A_n)).
\end{equation}
This easily leads to a contradiction.

On the other hand, let $\mu$ be a product probability measure with finite support. By tensorization, it suffices to consider $n=1$, and we may moreover assume $\mathcal{Y}$ to have finitely many elements only. Then, by \cite[Remark 6.6]{BT06}, $\mu$ satisfies a $\dpartial$-LSI$(\sigma^2)$ with $\sigma^2 \le C \log(1/\min_{y : \mu(y) > 0} \mu(y))$, which finishes the proof.
\end{proof}

In fact, Proposition \ref{d-LSIs} can be adapted to the difference operator $\mathfrak{h}^+$ as well. To see this, note that that \eqref{d-Widersp} can easily be rewritten for the difference operator $\mathfrak{h}^+$ (with only minor changes) and $\int\abs{\dpartial f}^2d\mu \le \int\abs{\mathfrak{h}^+ f}^2d\mu$. In particular, the $\dpartial$- and $\mathfrak{h}^+$-LSI properties are not essentially different.

The situation drastically changes if we consider $\mathfrak{h}$-LSIs instead. Here, a sufficient condition for the $\mathfrak{h}\mathrm{-LSI}$ property to hold is that the measure $\mu$ satisfies an approximate tensorization (AT) property. As a consequence, for product probability measures, satisfying an $\mathfrak{h}$-LSI is in fact a universal property.

\begin{theorem}	\label{theorem:hLSIapproximatetensorization}
	Let $\mathcal{Y} = \otimes_{i = 1}^n \mathcal{X}_i$ be a product of Polish spaces, and let $\mu$ be a probability measure on $\mathcal{Y}$. If $\mu$ satisfies an approximate tensorization property
	\begin{align}
		\Ent_{\mu}(f^2) \le C \sum_{i = 1}^n \int \Ent_{\mu(\cdot \mid x_{i^c})}(f^2(x_{i^c}, \cdot)) d\mu_{i^c}(x_{i^c}),
	\end{align}
	then $\mu$ also satisfies an $\mathfrak{h}\mathrm{-LSI}(C)$. In particular, any product probability measure satisfies an $\mathfrak{h}\mathrm{-LSI}(1)$.
\end{theorem}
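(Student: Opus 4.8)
The plan is to prove the implication ``approximate tensorization $\Rightarrow$ $\mathfrak{h}$-LSI$(C)$'' by reducing, via the tensorization hypothesis, to a one-dimensional statement: for a probability measure $\nu$ on a single Polish space $\mathcal{X}$, one has $\Ent_{\nu}(f^2) \le \int (\mathfrak{h} f)^2 \, d\nu$, where here $\mathfrak{h} f$ is evaluated coordinatewise with the remaining variables frozen. Indeed, once such a one-variable bound is available, applying it to $\nu = \mu(\cdot \mid x_{i^c})$ and the section $f(x_{i^c}, \cdot)$, integrating against $\mu_{i^c}$, and summing over $i$, the right-hand side of the approximate tensorization inequality becomes $C \sum_{i=1}^n \int (\mathfrak{h}_i f)^2 \, d\mu = C \int \abs{\mathfrak{h} f}^2 \, d\mu$, which is exactly the $\mathfrak{h}$-LSI$(C)$. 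The ``in particular'' statement then follows because any product probability measure satisfies approximate tensorization with constant $C = 1$ (this is the classical exact tensorization of entropy, i.e. subadditivity of entropy over product measures; see e.g. \cite{Led01} or \cite{BLM13}).

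First I would record the key one-dimensional lemma. For a probability measure $\nu$ on $\mathcal{X}$ and $g \in L^\infty(\nu)$, let $\mathfrak{h} g = \sup_{y'} \abs{g - g(\,\cdot\, \text{ with value } y')}_{\infty}$ in the sense of the definition in the paper, i.e. $\mathfrak{h} g = \operatorname{osc}(g) \coloneqq \sup g - \inf g$ (the $L^\infty$-oscillation of $g$). The claim is $\Ent_{\nu}(g^2) \le (\operatorname{osc} g)^2$. This is a standard fact; one clean way is to note that $\Ent_\nu(g^2)$ is invariant under $g \mapsto -g$ and, being a ``variance-type'' functional, is maximized, for fixed range $[\inf g, \sup g]$, in a way controlled by $(\operatorname{osc} g)^2$. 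Concretely, one can use the bound $\Ent_\nu(h) \le \frac{1}{2\min h}\,\operatorname{Var}_\nu(h)$ valid for positive $h$ bounded away from $0$ (or a direct comparison), or invoke the known Latała--Oleszkiewicz / Bobkov--Tetali-type estimate that the constant in the ``$\operatorname{osc}$-LSI'' is exactly $1$ after the normalization chosen here. Since the paper has fixed the normalization of $\mathfrak{h}$ precisely so that the Rademacher/product case gives constant $1$ (cf. the remark that independent Rademachers satisfy a $\mathfrak{d}$-LSI$(1)$ and the discussion around Theorem~\ref{theorem:BBLM}), the one-variable constant is $1$; one should double-check the numerical constant against that normalization, as this is where an off-by-a-factor error could creep in.

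The key steps, in order, are: (1) state and prove the one-dimensional oscillation-LSI $\Ent_\nu(g^2) \le (\operatorname{osc}_\nu g)^2$; (2) apply it with $\nu = \mu(\cdot\mid x_{i^c})$ and $g = f(x_{i^c},\cdot)$, noting that $\operatorname{osc}_\nu g \le \mathfrak{h}_i f(x)$ for $\mu$-a.e.\ $x$ by the definition of $\mathfrak{h}_i$ (the $L^\infty$-norm over $(X_i, X_i')$ of the difference dominates the oscillation under the conditional law, in fact equals it up to the convention on supports); (3) integrate over $x_{i^c}$ against $\mu_{i^c}$ and sum over $i \in [n]$, so that $\sum_i \int \Ent_{\mu(\cdot\mid x_{i^c})}(f^2(x_{i^c},\cdot))\,d\mu_{i^c} \le \sum_i \int (\mathfrak{h}_i f)^2 \, d\mu = \int \abs{\mathfrak{h} f}^2 \, d\mu$; (4) combine with the approximate tensorization hypothesis to get $\Ent_\mu(f^2) \le C \int \abs{\mathfrak{h} f}^2 \, d\mu$; (5) for the last sentence, cite exact tensorization of entropy (subadditivity under product measures) to see that product measures satisfy (AT) with $C=1$, hence an $\mathfrak{h}$-LSI$(1)$.

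The main obstacle, modest as it is, lies in step (1): producing the sharp one-dimensional oscillation-LSI with the right constant matching the paper's normalization of $\mathfrak{h}$, and making sure the measurability/support conventions (the paper explicitly says it ignores measurability issues, and defines $\mathfrak{h}_i$ via an $L^\infty$-norm over $(X_i,X_i')$) are handled so that $\operatorname{osc}_{\mu(\cdot\mid x_{i^c})} f(x_{i^c},\cdot) \le \mathfrak{h}_i f(x)$ holds almost surely. Everything else is bookkeeping: the reduction to one dimension is immediate from the form of the (AT) hypothesis, and the product-measure corollary is a direct invocation of classical subadditivity of entropy.
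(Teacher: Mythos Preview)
Your overall architecture is exactly the paper's: reduce via the approximate tensorization hypothesis to a one–variable inequality of the form $\Ent_\nu(g^2)\le c\,(\mathrm{osc}\,g)^2$, apply it with $\nu=\mu(\cdot\mid x_{i^c})$ and $g=f(x_{i^c},\cdot)$, sum and integrate, and finally invoke exact tensorization for product measures. So the route is right.

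The gap is in step~(1). You assert $\Ent_\nu(g^2)\le(\mathrm{osc}\,g)^2$ as ``a standard fact'' but neither of the justifications you sketch actually proves it. The bound $\Ent_\nu(h)\le \Var_\nu(h)/(2\min h)$ (with $h=g^2$) is correct but useless when $\inf|g|=0$, and even for $\inf|g|=a>0$ it only yields $\Ent_\nu(g^2)\le (b-a)^2(b+a)^2/(8a^2)$, which exceeds $(b-a)^2$ as soon as $b>(2\sqrt{2}-1)a$; so this approach does not cover the full range. The appeal to ``Lata\l a--Oleszkiewicz / Bobkov--Tetali'' is too vague to be a proof. The paper fills this hole by an explicit elementary computation: writing $\Ent(g^2)=\int_0^\infty(\log u+1)\,\IP(g^2\ge u)\,du$ via partial integration, bounding the tail integral crudely, and then checking by calculus that the resulting expression is at most $2(b-a)^2$ where $a=\mathrm{ess\,inf}|g|$, $b=\mathrm{ess\,sup}|g|$. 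That gives the one-dimensional constant $2$, not $1$.

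There is also a normalization slip: in this paper, $\Gamma$-LSI$(\sigma^2)$ means $\Ent_\mu(f^2)\le 2\sigma^2\int\Gamma(f)^2\,d\mu$, so ``$\Ent_\mu(f^2)\le C\int|\mathfrak{h}f|^2\,d\mu$'' would be an $\mathfrak{h}$-LSI$(C/2)$, not $(C)$. With the paper's one-dimensional constant $2$, the chain gives $\Ent_\mu(f^2)\le 2C\int|\mathfrak{h}f|^2\,d\mu$, which is precisely the $\mathfrak{h}$-LSI$(C)$. You anticipated a constant issue; this is where it bites.
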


To the best of our knowledge, Theorem \ref{theorem:hLSIapproximatetensorization} is new. For product measures, it might be compared to the Efron--Stein inequality (see e.\,g. \cite{ES81,St86}) which establishes the tensorization property for the variance, and can be regarded as a universal Poincar{\'e} inequality with respect to $\dpartial$ (see e.\,g. \cite{BGS18} for such an interpretation). 
However, note that Theorem \ref{theorem:hLSIapproximatetensorization} (i.\,e. more precisely the $\mathfrak{h}\mathrm{-LSI}(1)$ for product measures) does not imply the Efron--Stein inequality, as the difference operator is $\mathfrak{h}$ instead of $\dpartial$. Unfortunately, as Proposition \ref{d-LSIs} demonstrates, there is no ``entropy version'' of the Efron--Stein inequality of the form $\Ent_{\mu}(f^2) \le C \IE_\mu \abs{\dpartial f}^2$ (for any product probability measure $\mu$ and some universal constant $C$).

As by Theorem \ref{theorem:hLSIapproximatetensorization}, any set of independent random variables $X_1, \ldots, X_n$ satisfies an $\mathfrak{h}$-LSI$(1)$, it might be tempting to regard Theorem \ref{theorem:Lpnormestimates} as an $\mathfrak{h}$-LSI analogue of Theorem \ref{theorem:partialLSITails}. However, it seems that it is not possible to use the entropy method based on $\mathfrak{h}$-LSIs, so that this interpretation is not fully accurate.
More precisely, Theorem \ref{theorem:hLSIapproximatetensorization} cannot be used to estimate the growth of $L^p$ norms as in the setting of a $\dpartial\mathrm{-LSI}(\sigma^2)$. Indeed, it is impossible to prove the required moment inequalities
\begin{align}		\label{eqn:LpInequalityUnderH}
	\norm{f -\IE f}_q \le (\sigma^2 q)^{1/2} \norm{\mathfrak{h}f}_q
\end{align}
under an $\mathfrak{h}\mathrm{-LSI}(\sigma^2)$. For example, the measure $\mu_p = p\delta_1 + (1-p)\delta_0$ satisfies $\mathfrak{h}\mathrm{-LSI}(\sigma^2_p)$ with $\sigma_p^2 \sim p(1-p) \log(1/p)$ (for $p \to 0$), so that \eqref{eqn:LpInequalityUnderH} would imply for $f(x) = x$ an upper bound on the Orlicz norm associated to $\Psi_2(x) = e^{x^2} - 1$
\begin{align*}	
	\norm{f - \IE f}_{\Psi_2} \le 2e \sup_{q \ge 1} \frac{\norm{f - \IE f}_q}{q^{1/2}} \le 4e\sigma_p.
\end{align*}
However, a simple calculation shows that $\IE \exp\big( \frac{(f-\IE f)^2}{16e^2\sigma_p^2} \big) \to \infty$ as $p \to 0$.
%In an earlier draft we have falsely claimed to have proven \eqref{eqn:LpInequalityUnderH} and applied it to Erd{\"o}s--R{\'e}nyi graphs. However, this led to concentration results which could not hold in that generality. Apparently it is not possible to improve the results in \cite{AW15} on the triangle count in Erd{\"o}s--R{\'e}nyi graphs with this method.

The approximate tensorization property in Theorem \ref{theorem:hLSIapproximatetensorization} is interesting in its own right, but it is not yet well-studied. For finite spaces \cite{Ma15} gives sufficient conditions for a measure $\mu$ to satisfy an approximate tensorization property. Similar results have been derived in \cite{CMT15}, which can be applied in discrete and continuous settings.
For example, if one considers a measure of the form $$\mu(x) = Z^{-1} \prod_{i =1}^n \mu_{0,i}(x_i) \exp\Big( \sum_{i,j} J_{ij} w_{ij}(x_i,x_j) \Big)$$ 
for some countable spaces $\Omega_i$, $x_i \in \Omega_i$, measures $\mu_{0,i}$ on $\Omega_i$ and bounded functions $w_{ij}$, under certain technical conditions $\mu$ satisfies an approximate tensorization property. This does not require any functional inequality for $\mu_{0,i}$. Very recently, in \cite[Proposition 5.4]{AKPS18} it has been shown that the $\mathrm{AT}(C)$ property implies dimension-free concentration inequalities for convex functions.

Note that the $\mathrm{AT}(C)$ property requires a certain weak dependence assumption in general. For example, the push-forward of a random permutation $\pi$ of $[n]$ to $\IN^{n}$ cannot satisfy an approximate tensorization property. It is an interesting question to find necessary and sufficient conditions for the approximate tensorization property to hold.

\begin{proof}[Proof of Theorem \ref{theorem:hLSIapproximatetensorization}]
	Let $X = (X_1, \ldots, X_n)$ be a $\mathcal{Y}$-valued random vector with law $\mu$. First we consider the case $n = 1$.
	By homogeneity of both sides, we may assume $\int f^2(X) d\IP = 1$. Since $f$ is bounded, we have $0 \le a \le \abs{f(X)} \le b < \infty$ $\IP$-a.s., where $b$ is the essential supremum of $\abs{f(X)}$ and $a$ the essential infimum. Due to the constraints on the integral this leads to $a^2 \le 1 \le b^2$. (Actually the cases $b = 1$ or $a = 1$ are trivial, since then $f^2(X) = 1$ $\IP$-a.s., but we will not make this distinction.) Let $F(u) \coloneqq \IP(f^2(X) \ge u)$. In particular
	\[
		F(u) = \begin{cases}
			1 & u \le a^2, \\
			0 & u > b^2.
		\end{cases}
	\]
	Using the partial integration formula (see e.\,g. \cite[Theorem 21.67 and Remark 21.68]{HS75}) in connection with \cite[Theorem 7.7.1]{Bu07} yields
	\begin{align*}
		\Ent(f^2(X)) &= \int_0^\infty u \log u d(-F(u)) = \int_0^{b^2} (\log u + 1) F(u) du \\ 
		&= \int_0^{a^2} (\log u + 1) F(u) du + \int_{a^2}^{b^2} (\log u +1) F(u) du \\
		&= \int_0^{a^2} (\log u + 1)F(u) du + \int_{a^2}^{b^2} \log u F(u) du + (1-a^2).
	\end{align*}
	The first integral can be calculated explicitly
	\begin{align*}
		\int_0^{a^2} (\log u + 1) F(u) du = u(\log u - 1)\mid_0^{a^2} = a^2 \log a^2,
	\end{align*}
	and moreover we have due to $\log(u) \le \log(b^2)$ on $[a^2, b^2]$
	\begin{align*}
	\int_{a^2}^{b^2} \log u F(u) du \le \log(b^2)(1-a^2).
	\end{align*}
	Plugging in these two estimates yields
	\begin{align*}
	\Ent(f^2(X)) \le a^2 \log a^2 + (1-a^2) + \log b^2 (1-a^2) \eqqcolon f(a,b).
	\end{align*}
	Next, if we show that 
	\begin{align}		\label{eqn:fleg}
	f(a,b) \le 2 (b-a)^2 \text{ on } G \coloneqq \{ (a,b) \in \IR^2 : 0 \le a \le 1, 1 \le b < \infty \}, 
	\end{align}
	we can further estimate (as $\abs{\mathfrak{h}f}^2$ is a deterministic quantity in the case $n = 1$)
	\begin{align*}
	\Ent(f^2(X)) \le 2(b-a)^2 = 2 \IE \abs{\mathfrak{h}f}^2.
	\end{align*}
	To prove \eqref{eqn:fleg}, define 
	\begin{align*}
	g(a,b) \coloneqq a^2 \log a^2 + (1-a^2) + \log b^2 (1-a^2) - 2(b-a)^2.
	\end{align*}
	Now it is easy to see that $g(a,1) = a^2 \log a^2 + (1-a^2) - 2(1-a)^2 \le 0,$ since $\partial_ag(a,1) \ge 0$ for $a \in [0,1]$ and $g(1,1) = 0$. Moreover
	\begin{align*}
	\partial_b g(a,b) = - \frac{2}{b}\left( b^2 - 1 + (a-b)^2 \right) \le 0,
	\end{align*}
	so that $g$ is decreasing on every strip $\{ a_0 \} \times [1,\infty)$, and thus $g(a,b) \le 0$ for all $a,b \in G$. This finishes the proof for $n=1$. \par
	For arbitrary $n$, the proof is now easily completed. Assume that $f \in L^\infty(\mu)$, i.\,e. $\mu_{i^c}(x_{i^c})$-a.s. we have $f(x_{i^c}, \cdot) \in L^\infty(\mu(\cdot \mid x_{i^c}))$. For these $x_{i^c}$, by the $n=1$ case we therefore obtain
	\begin{align*}
		\Ent_{\mu(\cdot \mid x_{i^c})}(f^2(x_{i^c}, \cdot)) \le 2 \sup_{y_i', y_i''} \abs{f(x_{i^c}, y_i') - f(x_{i^c}, y_i'')}^2.
	\end{align*}
	Plugging this into the assumption leads to
	\begin{align*}
		\Ent_{\mu}(f^2) \le 2C \int \sum_{i = 1}^n \sup_{y_i', y_i''} \abs{f(x_{i^c}, y_i') - f(x_{i^c}, y_i'')}^2 d\mu_{i^c}(x_{i^c}) = 2C \int \abs{\mathfrak{h}f}^2 d\mu.
	\end{align*}
  
  As for the second part, it is a classical fact that independent random variables satisfy the tensorization property (i.\,e. $\mathrm{AT}(1)$), see for example \cite[Proposition 5.6]{Led01}, \cite[Theorem 4.10]{BBLM05} or \cite[Theorem 3.14]{vH16}. In the case of independent random variables, the assumption that $\mathcal{Y}$ is a product of Polish spaces can be dropped by simply defining $\mu(\cdot \mid x_{i^c}) \coloneqq \mu_i = \IP \circ X_i$.
\end{proof}

\end{document}